\documentclass[a4paper]{amsart}

\usepackage{amsmath}
\usepackage{amssymb}
\usepackage{amsthm}
\usepackage{comment}
\usepackage{enumitem}
\usepackage{float}
\usepackage{mathtools}
\usepackage{tikz-cd}
\usepackage{stmaryrd}
\usepackage{xcolor}

\usepackage{tikz}
\usetikzlibrary{arrows.meta,decorations.markings,decorations.pathreplacing}

\usepackage[textsize=footnotesize]{todonotes}

\usepackage[hidelinks]{hyperref}

\newtheorem{thm}{Theorem}[section]
\newtheorem{lem}[thm]{Lemma}
\newtheorem{prop}[thm]{Proposition}
\newtheorem{cor}[thm]{Corollary}
\theoremstyle{definition}
\newtheorem{df}[thm]{Definition}
\newtheorem{ex}[thm]{Example}
\theoremstyle{remark}
\newtheorem{rmk}[thm]{Remark}

\makeatletter
\g@addto@macro\bfseries{\boldmath}
\makeatother

\numberwithin{equation}{section}

\DeclareMathOperator{\Coin}{Coin} 
\DeclareMathOperator{\coin}{coin} 
\DeclareMathOperator{\id}{id}	  
\DeclareMathOperator{\ind}{ind}   
\DeclareMathOperator{\MC}{MC}	  
\DeclareMathOperator{\pr}{pr}	  

\newcommand{\RR}{\mathbb{R}}      
\newcommand{\ZZ}{\mathbb{Z}}      

\newcommand{\orb}{\backslash}     
\newcommand{\R}{\mathcal{R}}      
\renewcommand{\S}{\Sigma}         

\newcommand{\eps}{\varepsilon}
\renewcommand{\iff}{\enskip\Leftrightarrow\enskip}
\renewcommand{\mod}{\;\mathrm{mod}\,}

\title{Nielsen coincidence theory for $(n,m)$-valued pairs}
\author{Karel Dekimpe and Lore De Weerdt}
\thanks{Research supported by Methusalem grant METH/21/03 -- long term structural funding of the Flemish Government.}
\thanks{K. Dekimpe was supported by a FWO sabbatical bench (K803525N)}
\address{KU Leuven Campus Kulak Kortrijk, 8500 Kortrijk, Belgium}
\email{Karel.Dekimpe@kuleuven.be}
\email{Lore.DeWeerdt@kuleuven.be}

\begin{document}

\vspace*{-2\baselineskip}

\maketitle

\vspace*{-1.25\baselineskip}

\begin{abstract}
We define a Nielsen coincidence number for pairs of maps $(f,g)$ where $f$ is $n$-valued and $g$ is $m$-valued. The classical definition of the Nielsen number does not work in this setting, but we propose a theory where coincidence classes are allowed to overlap. We invoke a notion of \emph{multiplicity} of a coincidence point, which indicates to how many classes it can belong, and we show that our Nielsen number is a homotopy-invariant lower bound for the number of coincidence points \emph{with multiplicity}. We show a computational result in terms of single-valued lift factors, and using this, a Wecken property for self-maps of the circle.
\end{abstract}

\section*{Introduction}

In \cite{brownkolahi}, the authors proposed a generalisation of Nielsen theory to coincidences of pairs $f,g:X\multimap Y$ where $f$ is an $n$-valued map and $g$ an $m$-valued map (for any $n,m\in \ZZ_{>0}$). However, their definition of a coincidence class, and the resulting definition of a Nielsen number, only really works if one of the maps is single-valued (see \cite{coin}). In general, the relation using which they want to partition the coincidence set into coincidence classes is not necessarily transitive. As a result, coincidence classes are no longer disjoint, and a lot of concepts from classical Nielsen theory (at first sight) seem to break down. For example, we will see below that using their definition of coincidence classes, distinct classes may combine into one through a homotopy (see Example \ref{ex:htpy}), which is not desired behaviour. Conversely, homotopies may take one coincidence class into two or more distinct classes, so there is no obvious way to define geometric essentiality (which is normally defined as: the class corresponding to the class in question under a homotopy is never empty). 

In this paper, we show that we can make a theory work where coincidence classes are allowed to overlap, starting from lifting classes instead of the geometric characterisation using homotopies of paths (used in \cite{brownkolahi}). The lifting classes themselves behave as usual; only their projections, the coincidence classes, can overlap. We show that the points belonging to more than one class can only be `multiple coincidences', i.e.\ points where the images of $f$ and $g$ have more than one point in common; and we define a geometric Nielsen number which is a homotopy-invariant lower bound for the number of coincidence points, \emph{counted with multiplicity}.

In order to actually compute Nielsen numbers, we need some notion of index. An index in the classical sense cannot be defined in this setting, but we establish a link with single-valued factors of lifts to a finite covering space (analogous to \cite{coin}), such that the index of a coincidence class can be defined in terms of the indices of coincidence classes of these single-valued lifts. Using this, we can explicitly compute Nielsen numbers on the circle, and show a Wecken property in that setting.

During the writing process of this paper, another paper \cite{graffstaeckerzeromski} appeared on arXiv which addresses the same question. They use a different approach, building a Nielsen theory for \emph{graph coincidences} (points $(x,y)\in X\times Y$ with $y\in f(x)\cap g(x)$). Defining an index seems a lot harder in that setting, and they only do it for self-maps of the circle. In that sense our theory is more general. We added an extra section at the end of the paper where we show that their Nielsen number (both the geometric one defined in general and the algebraic one defined using an index on the circle) is actually equal to ours, so the approaches are equivalent.

\bigskip

\section{Coincidence classes}

Let $X$ and $Y$ be connected finite polyhedra. An $n$-valued map $f:X\multimap Y$ is a continuous (i.e.\ upper- and lower-semicontinuous, see \cite{browngoncalves}) multifunction for which $|f(x)|=n$ for every $x\in X$. For an $n$-valued map $f:X\multimap Y$ and an $m$-valued map $g:X\multimap Y$, we define \[
\Coin(f,g)=\{x\in X \mid f(x)\cap g(x)\neq \varnothing \},
\]
following \cite{brownkolahi}.
We will define coincidence classes for the pair $(f,g)$. These will no longer constitute a partition of $\Coin(f,g)$: they might overlap, but we expect that the points belonging to multiple classes will be precisely the `multiple coincidences' of $f$ and $g$.

\begin{df}
We say $x\in \Coin(f,g)$ is a \emph{multiple coincidence} if $|f(x)\cap g(x)|>1$. We call $|f(x)\cap g(x)|$ the \emph{multiplicity} of the coincidence point $x$.
\end{df}

\subsection{Lifting classes}

Recall (see e.g.\ \cite{browngoncalves}) that an $n$-valued map $f:X\multimap Y$ can equivalently be viewed as a map $f:X\to D_n(Y)$, where \[
D_n(Y)=\{\{y_1,\ldots,y_n\}\subseteq Y\mid y_i\neq y_j \text{ if }i\neq j\}
\]
is the unordered configuration space, topologised as the quotient of the ordered configuration space \[
F_n(Y)=\{(y_1,\ldots,y_n)\in Y^n \mid y_i\neq y_j \text{ if }i\neq j\}
\]
under the action of the symmetric group $\Sigma_n$.

Using this configuration space viewpoint, a lifting theory for $n$-valued maps was described in \cite{charlotte1} (in the setting of self-maps, with the purpose of studying fixed points, but the concepts clearly generalise).
Let $p:\tilde{X}\to X$ be the universal cover of $X$, with covering group $\Pi\cong \pi_1(X)$, and $q:\tilde{Y}\to Y$ the universal cover of $Y$, with covering group $\Delta\cong \pi_1(Y)$. A covering space for $D_n(Y)$ is the \emph{orbit configuration space} \[
F_n(\tilde{Y},\Delta)=\{(\tilde{y}_1,\ldots,\tilde{y}_n)\in \tilde{Y}^n \mid q(\tilde{y}_i)\neq q(\tilde{y}_j) \text{ if }i\neq j \},
\] 
and one defines a \emph{lift} for $f$ as a map $\tilde{f}$ making the following diagram commute: \[
\begin{tikzcd}
\tilde{X} \ar[r,"\tilde{f}"] \ar[d,"p"'] & F_n(\tilde{Y},\Delta) \ar[d,"q^n"] \\
X \ar[r,"f"] & D_n(Y).
\end{tikzcd}
\]
Since $F_n(\tilde{Y},\Delta)\subseteq \tilde{Y}^n$, there are maps $\tilde{f}_i:\tilde{X}\to \tilde{Y}$ such that $\tilde{f}=(\tilde{f}_1,\ldots,\tilde{f}_n)$, so \[
f(p(\tilde{x}))=q^n(\tilde{f}(\tilde{x}))=\{q(\tilde{f}_1(\tilde{x})),\ldots,q(\tilde{f}_n(\tilde{x}))\}
\]
for all $\tilde{x}\in \tilde{X}$.
The maps $\tilde{f}_i$ are called \emph{lift factors} of $f$. It follows easily from single-valued lift theory that \begin{itemize}
\item if $\tilde{f}_i$ is a lift factor of $f$, then so is $\delta\tilde{f}_i\gamma$, for all $\gamma\in \Pi,\delta\in \Delta$;
\item given a fixed lift $\tilde{f}$ for $f$, any factor of another lift is of the form $\delta\tilde{f}_i$ for a unique $\delta\in \Delta$ and $i\in \{1,\ldots,n\}$;
\item if $\tilde{f}_i$ and $\tilde{f}'_j$ are two lift factors of $f$ such that $\tilde{f}_i(\tilde{x})=\tilde{f}'_j(\tilde{x})$ for some $\tilde{x}\in \tilde{X}$, then $\tilde{f}_i=\tilde{f}'_j$.
\end{itemize}

\begin{df}
Let $\tilde{f}=(\tilde{f}_1,\ldots,\tilde{f}_n)$ be a lift of $f$ and $\tilde{g}=(\tilde{g}_1,\ldots,\tilde{g}_m)$ a lift of $g$. For all $i$ and $j$, we call $(\tilde{f}_i,\tilde{g}_j)$ a \emph{lifting pair} of $(f,g)$.
\end{df}

\begin{lem}
We have \[
\Coin(f,g)=\bigcup_{(\tilde{f}_i,\tilde{g}_j)} p\Coin(\tilde{f}_i,\tilde{g}_j)
\]
where the union runs over all lifting pairs of $(f,g)$.
\end{lem}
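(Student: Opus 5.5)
We prove the two inclusions separately; both come straight from the defining identity $f(p(\tilde{x}))=\{q(\tilde{f}_1(\tilde{x})),\ldots,q(\tilde{f}_n(\tilde{x}))\}$ (and its analogue for $g$) and the freedom to translate lift factors by elements of $\Delta$.

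For the inclusion $\supseteq$, take a lifting pair $(\tilde{f}_i,\tilde{g}_j)$, where $\tilde{f}_i$ is a factor of some lift $\tilde{f}$ of $f$ and $\tilde{g}_j$ a factor of some lift $\tilde{g}$ of $g$. Let $\tilde{x}\in\Coin(\tilde{f}_i,\tilde{g}_j)$. Then $q(\tilde{f}_i(\tilde{x}))=q(\tilde{g}_j(\tilde{x}))$. The left-hand side lies in $f(p(\tilde{x}))$ and the right-hand side lies in $g(p(\tilde{x}))$. Hence $f(p(\tilde{x}))\cap g(p(\tilde{x}))\neq\varnothing$, that is, $p(\tilde{x})\in\Coin(f,g)$.

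For the inclusion $\subseteq$, let $x\in\Coin(f,g)$ and choose $y\in f(x)\cap g(x)$. Fix lifts $\tilde{f}$ and $\tilde{g}$, and pick any $\tilde{x}\in p^{-1}(x)$. Because
\[
f(x)=\{q(\tilde{f}_1(\tilde{x})),\ldots,q(\tilde{f}_n(\tilde{x}))\},
\]
there is an index $i$ with $q(\tilde{f}_i(\tilde{x}))=y$. Similarly there is a $j$ with $q(\tilde{g}_j(\tilde{x}))=y$. The points $\tilde{f}_i(\tilde{x})$ and $\tilde{g}_j(\tilde{x})$ are then in the same fibre of $q$, so $\tilde{f}_i(\tilde{x})=\delta\,\tilde{g}_j(\tilde{x})$ for some $\delta\in\Delta$.

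The only delicate point is that $(\tilde{f}_i,\delta\tilde{g}_j)$ must still be a lifting pair in the sense of the definition, i.e.\ $\delta\tilde{g}_j$ must be a factor of an honest lift of $g$, not merely a map $\tilde{X}\to\tilde{Y}$. This holds because $\delta^m\tilde{g}=(\delta\tilde{g}_1,\ldots,\delta\tilde{g}_m)$ is again a lift of $g$: it maps into $F_m(\tilde{Y},\Delta)$, since $q(\delta\tilde{g}_k(\tilde{x}))=q(\tilde{g}_k(\tilde{x}))$ and the projections stay distinct, and it satisfies $q^m\circ\delta^m\tilde{g}=q^m\circ\tilde{g}=g\circ p$.

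So $(\tilde{f}_i,\delta\tilde{g}_j)$ is a lifting pair, and $\tilde{x}\in\Coin(\tilde{f}_i,\delta\tilde{g}_j)$. Hence $x=p(\tilde{x})$ lies in the union, which completes the proof.
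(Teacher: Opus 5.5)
Your proof is correct and follows the paper's argument: both inclusions come from the identity $f(p(\tilde{x}))=\{q(\tilde{f}_1(\tilde{x})),\ldots,q(\tilde{f}_n(\tilde{x}))\}$ and its analogue for $g$. For $\subseteq$, a translate by an element of $\Delta$ turns equality of projections into an actual coincidence of a lifting pair; you translate the $g$-factor ($\delta\tilde{g}_j$) where the paper translates the $f$-factor ($\alpha\tilde{f}_i$), and you check explicitly that $\delta\tilde{g}$ is again a lift, which the paper takes from its earlier remark that $\delta\tilde{f}_i\gamma$ is always a lift factor.
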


\begin{proof}
If $\tilde{x}\in \Coin(\tilde{f}_i,\tilde{g}_j)$ for some lifting pair $(\tilde{f}_i,\tilde{g}_j)$, then \[
q(\tilde{f}_i(\tilde{x}))=q(\tilde{g}_j(\tilde{x}))\in f(p(\tilde{x}))\cap g(p(\tilde{x}))
\]
so $p(\tilde{x})\in \Coin(f,g)$.

Conversely, suppose $x\in \Coin(f,g)$, and pick $\tilde{x}\in p^{-1}(x)$. Fix lifts $\tilde{f}=(\tilde{f}_1,\ldots,\tilde{f}_n)$ for $f$ and $\tilde{g}=(\tilde{g}_1,\ldots,\tilde{g}_m)$ for $g$. Then \[
q^n(\tilde{f}(\tilde{x}))\cap q^m(\tilde{g}(\tilde{x}))=f(x)\cap g(x)\neq \varnothing,
\]
which means $q(\tilde{f}_i(\tilde{x}))=q(\tilde{g}_j(\tilde{x}))$ for some $i,j$. Hence there is an $\alpha\in \Delta$ such that $\alpha\tilde{f}_i(\tilde{x})=\tilde{g}_j(\tilde{x})$. Then $(\alpha\tilde{f}_i,\tilde{g}_j)$ is a lifting pair for $(f,g)$ with $x\in p\Coin(\alpha\tilde{f}_i,\tilde{g}_j)$.
\end{proof}

\begin{df}
We call two lifting pairs $(\tilde{f}_i,\tilde{g}_j)$ and $(\tilde{f}'_k,\tilde{g}'_\ell)$ \emph{equivalent}, and write $(\tilde{f}_i,\tilde{g}_j)\sim(\tilde{f}'_k,\tilde{g}'_\ell)$, if there exist $\gamma\in \Pi$ and $\delta\in \Delta$ such that $\tilde{f}'_k=\delta\tilde{f}_i\gamma$ and $\tilde{g}'_\ell=\delta\tilde{g}_j\gamma$.
\end{df}

This clearly defines an equivalence relation on the set of lifting pairs. We call the equivalence classes the \emph{lifting classes} of $(f,g)$, and the number of lifting classes the \emph{Reidemeister number} $R(f,g)$.

\begin{prop}\label{prop:equiv-lifts}
Let $(\tilde{f}_i,\tilde{g}_j)$ and $(\tilde{f}'_k,\tilde{g}'_\ell)$ be two lifting pairs of $(f,g)$.
\begin{itemize}
\item[\rm(i)] If $(\tilde{f}_i,\tilde{g}_j)\sim(\tilde{f}'_k,\tilde{g}'_\ell)$, then $p\Coin(\tilde{f}_i,\tilde{g}_j)= p\Coin(\tilde{f}'_k,\tilde{g}'_\ell)$. 
\item[\rm(ii)] If $(\tilde{f}_i,\tilde{g}_j)\nsim(\tilde{f}'_k,\tilde{g}'_\ell)$, then $p\Coin(\tilde{f}_i,\tilde{g}_j)\cap p\Coin(\tilde{f}'_k,\tilde{g}'_\ell)$ can only contain multiple coincidence points.
\end{itemize}
\end{prop}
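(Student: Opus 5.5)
For (i), I would argue directly from the definition. Suppose $\tilde{f}'_k=\delta\tilde{f}_i\gamma$ and $\tilde{g}'_\ell=\delta\tilde{g}_j\gamma$. Then
\[
\tilde{x}\in\Coin(\tilde{f}'_k,\tilde{g}'_\ell)\iff \delta\tilde{f}_i(\gamma\tilde{x})=\delta\tilde{g}_j(\gamma\tilde{x})\iff \gamma\tilde{x}\in\Coin(\tilde{f}_i,\tilde{g}_j),
\]
because $\delta$ acts injectively on $\tilde{Y}$. This gives $\Coin(\tilde{f}'_k,\tilde{g}'_\ell)=\gamma^{-1}\Coin(\tilde{f}_i,\tilde{g}_j)$. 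Since $p\gamma^{-1}=p$, the two projections coincide.

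For (ii), I would prove the contrapositive. Let $x\in p\Coin(\tilde{f}_i,\tilde{g}_j)\cap p\Coin(\tilde{f}'_k,\tilde{g}'_\ell)$ and assume $|f(x)\cap g(x)|=1$; the goal is to show the two pairs are equivalent.

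First pick $\tilde{x}\in\Coin(\tilde{f}_i,\tilde{g}_j)$ and $\tilde{x}'\in\Coin(\tilde{f}'_k,\tilde{g}'_\ell)$ over $x$. Write $\tilde{x}'=\gamma\tilde{x}$ for some $\gamma\in\Pi$. Replacing the second pair by the equivalent pair $(\tilde{f}'_k\gamma,\tilde{g}'_\ell\gamma)$ moves its coincidence point to $\tilde{x}$. Here I use the first bullet (that $\tilde{f}'_k\gamma$ is again a lift factor) together with the computation in (i). So I may assume both pairs coincide at the same point $\tilde{x}$.

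Next, $q(\tilde{f}_i(\tilde{x}))$ and $q(\tilde{f}'_k(\tilde{x}))$ both lie in $f(x)\cap g(x)$, since they equal $q(\tilde{g}_j(\tilde{x}))$ and $q(\tilde{g}'_\ell(\tilde{x}))$ respectively. By the multiplicity-one assumption they are equal. Hence there is $\delta\in\Delta$ with $\tilde{f}'_k(\tilde{x})=\delta\tilde{f}_i(\tilde{x})$. Since $\delta\tilde{f}_i$ is a lift factor by the first bullet, the third bullet (the uniqueness property) gives $\tilde{f}'_k=\delta\tilde{f}_i$. Similarly,
\[
\tilde{g}'_\ell(\tilde{x})=\tilde{f}'_k(\tilde{x})=\delta\tilde{f}_i(\tilde{x})=\delta\tilde{g}_j(\tilde{x}),
\]
so $\tilde{g}'_\ell=\delta\tilde{g}_j$ by the same uniqueness applied to $g$. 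Thus the pairs are equivalent with this $\delta$ and $\gamma$ trivial, after the earlier normalisation.

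The only delicate step is the use of multiplicity one to get a single $\delta$ working for both $f$ and $g$ at once. The same $\delta$ is forced because the common coincidence value upstairs is $\tilde{f}'_k(\tilde{x})=\tilde{g}'_\ell(\tilde{x})$, and both are translates of $\tilde{f}_i(\tilde{x})=\tilde{g}_j(\tilde{x})$ by $\delta$. Everything else is bookkeeping with the listed lift-factor properties.
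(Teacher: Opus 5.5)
Your proof is correct and follows essentially the same argument as the paper. In (i) you compute $\Coin(\tilde{f}'_k,\tilde{g}'_\ell)=\gamma^{-1}\Coin(\tilde{f}_i,\tilde{g}_j)$ directly, which gives both inclusions at once; in (ii) the only cosmetic difference is that you first replace the second pair by the equivalent pair $(\tilde{f}'_k\gamma,\tilde{g}'_\ell\gamma)$, whereas the paper carries $\gamma$ through and uses multiplicity one with uniqueness of lift factors to conclude $\tilde{f}_i=\delta\tilde{f}'_k\gamma$ and $\tilde{g}_j=\delta\tilde{g}'_\ell\gamma$ directly.
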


\begin{proof}
For (i), suppose $\tilde{f}'_k=\delta\tilde{f}_i\gamma$ and $\tilde{g}'_\ell=\delta\tilde{g}_j\gamma$, and pick $x\in p\Coin(\tilde{f}_i,\tilde{g}_j)$. Take $\tilde{x}\in p^{-1}(x)$ with $\tilde{f}_i(\tilde{x})=\tilde{g}_j(\tilde{x})$. Then \[
\begin{split}
\tilde{f}'_k(\gamma^{-1}\tilde{x})=\delta\tilde{f}_i\gamma(\gamma^{-1}\tilde{x})&=\delta\tilde{f}_i(\tilde{x})\\&=\delta\tilde{g}_j(\tilde{x})=\delta\tilde{g}_j\gamma(\gamma^{-1}\tilde{x})=\tilde{g}'_\ell(\gamma^{-1}\tilde{x}),
\end{split}
\]
so $\gamma^{-1}\tilde{x}\in p^{-1}(x)$ belongs to $\Coin(\tilde{f}'_k,\tilde{g}'_\ell)$; that is, $x\in p\Coin(\tilde{f}'_k,\tilde{g}'_\ell)$. The converse inclusion is analogous.

For (ii), suppose $x\in p\Coin(\tilde{f}_i,\tilde{g}_j)\cap p\Coin(\tilde{f}'_k,\tilde{g}'_\ell)$ and $|f(x)\cap g(x)|=1$; we will show that $(\tilde{f}_i,\tilde{g}_j)\sim(\tilde{f}'_k,\tilde{g}'_\ell)$.

Pick $\tilde{x}\in p^{-1}(x)$ with $\tilde{f}_i(\tilde{x})=\tilde{g}_j(\tilde{x})$ and $\tilde{x}'\in p^{-1}(x)$ with $\tilde{f}'_k(\tilde{x}')=\tilde{g}'_\ell(\tilde{x}')$. Write $\tilde{x}'=\gamma\tilde{x}$ with $\gamma\in \Pi$. Then we have \[
\begin{split}
q(\tilde{f}_i(\tilde{x}))&=q(\tilde{g}_j(\tilde{x}))\in f(x)\cap g(x) \\
q(\tilde{f}'_k(\gamma\tilde{x}))&=q(\tilde{g}'_\ell(\gamma\tilde{x}))\in f(x)\cap g(x),
\end{split}
\]
so since $|f(x)\cap g(x)|=1$, \[
q(\tilde{f}_i(\tilde{x}))=q(\tilde{f}'_k(\gamma\tilde{x})).
\]
It follows that there is a $\delta\in \Delta$ such that \[
\tilde{f}_i(\tilde{x})=\delta\tilde{f}'_k(\gamma\tilde{x})=\delta\tilde{f}'_k\gamma(\tilde{x}).
\]
Since lift factors are uniquely determined by their image of a point, it follows that $\tilde{f}_i=\delta\tilde{f}'_k\gamma$. We also have \[
\tilde{g}_j(\tilde{x})=\tilde{f}_i(\tilde{x})=\delta\tilde{f}'_k(\gamma\tilde{x})=\delta\tilde{g}'_\ell(\gamma\tilde{x})=\delta\tilde{g}'_\ell\gamma(\tilde{x}),
\]
so in the same way, $\tilde{g}_j=\delta\tilde{g}'_\ell\gamma$.
\end{proof}

Thus, we can write \[
\Coin(f,g)=\bigcup_{[(\tilde{f}_i,\tilde{g}_j)]} p\Coin(\tilde{f}_i,\tilde{g}_j)
\]
where the union runs over all lifting classes of $(f,g)$. We call the sets $p\Coin(\tilde{f}_i,\tilde{g}_j)$, indexed by the lifting classes, the \emph{coincidence classes} of $(f,g)$. Some of these classes may be empty, and classes may overlap, or even be equal, but they are considered distinct when labeled by distinct lifting classes. The points that belong to multiple classes can only be multiple coincidence points.

\subsection{Homotopies of paths}

It is well-known (see e.g.\ \cite{browngoncalves}) that any $n$-valued map defined on a simply connected space \emph{splits}, i.e., its image is the union of the images of $n$ single-valued maps. In particular, given a path $c:I\to X$, the $n$-valued map $fc:I\multimap Y$ splits into $n$ paths in $Y$. Explicitly, if $\tilde{f}=(\tilde{f}_1,\ldots,\tilde{f}_n):\tilde{X}\to F_n(\tilde{Y},\Delta)$ is a lift of $f$, and $\tilde{c}:I\to \tilde{X}$ is a lift of $c$, then \[
fc=q^n\tilde{f}\tilde{c}=\{q\tilde{f}_1\tilde{c},\ldots,q\tilde{f}_n\tilde{c}\},
\]
so the factors of $fc$ are precisely the paths $q\tilde{f}_i\tilde{c}:I\to Y$. Similarly for $g$.

\begin{prop}\label{prop:htpy}
Two points $x,y\in \Coin(f,g)$ belong to the same coincidence class if and only if there is a path $c:I\to X$ from $x$ to $y$ such that $fc$ and $gc$ have a path homotopic factor.
\end{prop}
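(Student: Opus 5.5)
We read "belong to the same coincidence class" as: there is a lifting pair $(\tilde{f}_i,\tilde{g}_j)$ with $x,y\in p\Coin(\tilde{f}_i,\tilde{g}_j)$. We prove both directions by lifting paths to $\tilde{X}$ and using the explicit splitting $fc=\{q\tilde{f}_1\tilde{c},\ldots,q\tilde{f}_n\tilde{c}\}$ recalled above. The guiding principle is that a path in $Y$ is determined up to path homotopy by the endpoints of its lift to $\tilde{Y}$.

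\emph{($\Rightarrow$)} Suppose $x,y\in p\Coin(\tilde{f}_i,\tilde{g}_j)$. Choose $\tilde{x}\in p^{-1}(x)$ and $\tilde{y}\in p^{-1}(y)$ with $\tilde{f}_i(\tilde{x})=\tilde{g}_j(\tilde{x})$ and $\tilde{f}_i(\tilde{y})=\tilde{g}_j(\tilde{y})$. Since $\tilde{X}$ is path connected, pick a path $\tilde{c}$ from $\tilde{x}$ to $\tilde{y}$ and put $c=p\tilde{c}$. By Proposition~\ref{prop:equiv-lifts}(i) we may replace the lifting pair by an equivalent one. In particular we may assume $\tilde{f}_i$ and $\tilde{g}_j$ are factors of fixed lifts $\tilde{f}$ and $\tilde{g}$: by the bullet points, any lift factor of $f$ is $\delta\tilde{f}_i$ for a factor $\tilde{f}_i$ of the fixed lift $\tilde{f}$, and $\delta\tilde{f}$ is again a lift. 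Hence $q\tilde{f}_i\tilde{c}$ is a factor of $fc$ and $q\tilde{g}_j\tilde{c}$ is a factor of $gc$. The paths $\tilde{f}_i\tilde{c}$ and $\tilde{g}_j\tilde{c}$ in the simply connected space $\tilde{Y}$ have the same endpoints, so they are path homotopic. Projecting the homotopy by $q$ shows that $q\tilde{f}_i\tilde{c}\simeq q\tilde{g}_j\tilde{c}$.

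\emph{($\Leftarrow$)} Let $c$ go from $x$ to $y$ with factors $q\tilde{f}_i\tilde{c}\simeq q\tilde{g}_j\tilde{c}$ rel endpoints, where $\tilde{c}$ is a lift of $c$ from some $\tilde{x}$ to some $\tilde{y}$. Path homotopic paths have the same starting point, so $q\tilde{f}_i(\tilde{x})=q\tilde{g}_j(\tilde{x})$. Hence there is $\alpha\in\Delta$ with $\alpha\tilde{f}_i(\tilde{x})=\tilde{g}_j(\tilde{x})$. The path $\alpha\tilde{f}_i\tilde{c}$ is then the lift of $q\tilde{f}_i\tilde{c}$ starting at $\tilde{g}_j(\tilde{x})$, and $\tilde{g}_j\tilde{c}$ is the lift of $q\tilde{g}_j\tilde{c}$ starting at that same point. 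By the homotopy lifting property, path homotopic paths with the same initial lift point have lifts with the same endpoint, so $\alpha\tilde{f}_i(\tilde{y})=\tilde{g}_j(\tilde{y})$. Since $\alpha\tilde{f}_i$ is a lift factor of $f$, $(\alpha\tilde{f}_i,\tilde{g}_j)$ is a lifting pair. Both $\tilde{x}$ and $\tilde{y}$ lie in $\Coin(\alpha\tilde{f}_i,\tilde{g}_j)$, so $x$ and $y$ are in the same coincidence class.

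The only delicate point is bookkeeping. The factors of $fc$ must be identified with $q\tilde{f}_k\tilde{c}$ for one fixed lift $\tilde{c}$, so that we can produce the lifting pair. The change of lift factor by $\alpha$ is harmless, since $\alpha\tilde{f}_i$ is still a lift factor.
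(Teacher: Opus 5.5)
Your proof is correct and follows the paper's argument essentially step for step. For ($\Rightarrow$) you use simple connectivity of $\tilde{Y}$ to get $\tilde{f}_i\tilde{c}\simeq\tilde{g}_j\tilde{c}$ and then project; for ($\Leftarrow$) you translate by some $\alpha\in\Delta$ and use that path homotopic paths lifted from a common starting point have a common endpoint. The detour through Proposition~\ref{prop:equiv-lifts}(i) is not needed, since a lifting pair by definition consists of factors of lifts of $f$ and $g$, and the splitting $fc=\{q\tilde{f}_1\tilde{c},\ldots,q\tilde{f}_n\tilde{c}\}$ holds for every lift $\tilde{f}$.
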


\begin{proof}
Whenever we talk about a homotopy between paths in this proof, we mean a \emph{path homotopy}, i.e.\ a homotopy relative to the endpoints.
	
Suppose $x,y\in \Coin(f,g)$ belong to the same coincidence class $p\Coin(\tilde{f}_i,\tilde{g}_j)$, for some lifting pair $(\tilde{f}_i,\tilde{g}_j)$ of $(f,g)$; say $\tilde{x}\in p^{-1}(x)$ and $\tilde{y}\in p^{-1}(y)$ belong to $\Coin(\tilde{f}_i,\tilde{g}_j)$. Let $\tilde{c}:I\to \tilde{X}$ be a path from $\tilde{x}$ to $\tilde{y}$. Since \[
\begin{split}
\tilde{f}_i\tilde{c}(0)=\tilde{f}_i(\tilde{x})&=\tilde{g}_j(\tilde{x})=\tilde{g}_j\tilde{c}(0) \\
\tilde{f}_i\tilde{c}(1)=\tilde{f}_i(\tilde{y})&=\tilde{g}_j(\tilde{y})=\tilde{g}_j\tilde{c}(1)
\end{split}
\] 
and $\tilde{Y}$ is simply connected, the paths $\tilde{f}_i\tilde{c}$ and $\tilde{g}_j\tilde{c}$ are homotopic relative to the endpoints. Therefore, the paths $q\tilde{f}_i\tilde{c}$ and $q\tilde{g}_j\tilde{c}$ are homotopic in $Y$, so these are homotopic factors of $fc$ and $gc$.

Conversely, for $x,y\in \Coin(f,g)$, suppose $c:I\to X$ is a path from $x$ to $y$ such that $fc$ and $gc$ have a homotopic factor. The homotopic factors are of the form $q\tilde{f}_i\tilde{c}$ and $q\tilde{g}_j\tilde{c}$, where $\tilde{c}$ is a lift for $c$, and $\tilde{f}_i$ and $\tilde{g}_j$ are lift factors of $f$ and $g$, respectively. Since $q\tilde{f}_i\tilde{c}(0)=q\tilde{g}_j\tilde{c}(0)$, there is an $\alpha\in \Delta$ such that $\alpha\tilde{f}_i\tilde{c}(0)=\tilde{g}_j\tilde{c}(0)$. Then $\alpha\tilde{f}_i\tilde{c}$ and $\tilde{g}_j\tilde{c}$ are paths in $\tilde{Y}$ with homotopic projections and with the same beginpoint, so they must also have the same endpoint: $\alpha\tilde{f}_i\tilde{c}(1)=\tilde{g}_j\tilde{c}(1)$. Thus, both $\tilde{c}(0)\in p^{-1}(x)$ and $\tilde{c}(1)\in p^{-1}(y)$ belong to $\Coin(\alpha\tilde{f}_i,\tilde{g}_j)$, hence $x$ and $y$ belong to the same coincidence class $p\Coin(\alpha\tilde{f}_i,\tilde{g}_j)$.
\end{proof}

This is precisely how coincidence classes were defined in \cite{brownkolahi}. In our framework, this is indeed an equivalent characterisation, but it cannot be used as a \emph{definition} for coincidence classes: for example, when two coincidence classes are equal as a set, this definition would only detect one class.

\begin{ex}\label{ex:htpy}
Consider the maps $f$ and $g$ on the circle whose graphs are shown in Figure \ref{fig:coin-nm-hom} (where the circle is represented as the interval with the endpoints identified).
The pair $(f,g)$ has two coincidence points, which clearly belong to different classes. But after a homotopy $f\simeq f'$, these points combine into a single coincidence point. 

If we would define coincidence classes using homotopies of paths, the pair $(f',g)$ would only have a single coincidence class. So it would look like two coincidence classes can be combined into one by a homotopy. However, with the definition using lifting classes, one would see that $(f',g)$ still has two coincidence classes, both equal as a set to the single coincidence point, but labeled by different lifting classes.

\begin{figure}[h!]
\centering
\medskip
\begin{tikzpicture}[scale=3]
{\draw[thick,red] (0,0.2)--(1,0.2);
\draw[thick,red] (0,0.8)--(1,0.8);}
\draw[thick,blue] (0,0)--(1,0.5);
\draw[thick,blue] (0,0.5)--(1,1);
{\draw[red] (0.15,0.88) node{\small$f$};}
\draw[blue] (0.25,0.53) node{\small$g$};
{\draw[dashed] (0.4,0)--(0.4,0.2);
\draw[dashed] (0.6,0)--(0.6,0.8);
\filldraw[blue!50!black] (0.4,0.2) circle (0.5pt);
\filldraw[blue!50!black] (0.6,0.8) circle (0.5pt);}
\draw (0,0) rectangle (1,1);
\end{tikzpicture}
\hspace{1.5cm}
\begin{tikzpicture}[scale=3]
{\draw[thick,red] (0,0.25)--(1,0.25);
\draw[thick,red] (0,0.75)--(1,0.75);}
\draw[thick,blue] (0,0)--(1,0.5);
\draw[thick,blue] (0,0.5)--(1,1);
{\draw[red] (0.22,0.84) node{\small$f'\simeq f$};}
\draw[blue] (0.25,0.53) node{\small$g$};
{\draw[dashed] (0.5,0)--(0.5,0.75);
\filldraw[blue!50!black] (0.5,0.25) circle (0.5pt);
\filldraw[blue!50!black] (0.5,0.75) circle (0.5pt);}
\draw (0,0) rectangle (1,1);
\end{tikzpicture}
\caption{Defining coincidence classes using homotopies of paths would not lead to a well-behaved notion of coincidence classes, since it cannot detect multiple coincidence classes that are equal as sets.}
\label{fig:coin-nm-hom}
\end{figure}
\end{ex}

\subsection{Reidemeister classes}

Equivalence of lifting pairs can be checked in an algebraic way as follows. Fix lifts $\tilde{f}=(\tilde{f}_1,\ldots,\tilde{f}_n)$ for $f$ and $\tilde{g}=(\tilde{g}_1,\ldots,\tilde{g}_m)$ for $g$. These lifts induce morphisms \[
\begin{split}
\varphi:\Pi&\to \Delta^n\rtimes \S_n:\gamma\mapsto (\varphi_1(\gamma),\ldots,\varphi_n(\gamma);\sigma_\gamma) \\
\psi:\Pi&\to \Delta^m\rtimes \S_m:\gamma\mapsto (\psi_1(\gamma),\ldots,\psi_n(\gamma);\tau_\gamma)
\end{split}
\]
such that for all $\gamma\in \Pi$, $i\in \{1,\ldots,n\}$ and $j\in \{1,\ldots,m\}$, \[
\begin{split}
\tilde{f}_i\gamma&=\varphi_i(\gamma)\tilde{f}_{\sigma^{-1}_\gamma(i)} \\
\tilde{g}_j\gamma&=\psi_j(\gamma)\tilde{g}_{\tau^{-1}_\gamma(j)}
\end{split}
\]
(see \cite{charlotte1,coin}).
Note that the maps $\sigma:\Pi\to \S_n$ and $\tau:\Pi\to \S_m$ are group morphisms, and the maps $\varphi_i:\Pi\to \Delta$ and $\psi_j:\Pi\to \Delta$ satisfy \begin{equation}\label{eq:phi-morf}
\begin{split}
\forall \gamma,\gamma'\in\Pi:\enskip\varphi_i(\gamma\gamma')&=\varphi_i(\gamma)\varphi_{\sigma_\gamma^{-1}(i)}(\gamma') \\ \psi_j(\gamma\gamma')&=\psi_j(\gamma)\psi_{\tau_\gamma^{-1}(j)}(\gamma').
\end{split}
\end{equation}

\begin{lem}
Any lifting class for $(f,g)$ contains a pair $(\alpha\tilde{f}_i,\tilde{g}_j)$, for some $\alpha\in \Delta$, $i\in \{1,\ldots,n\}$ and $j\in \{1,\ldots,m\}$.
\end{lem}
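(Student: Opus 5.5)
Take an arbitrary lifting pair $(\tilde{f}'_k,\tilde{g}'_\ell)$, which by definition comes from some lift $\tilde{f}'$ of $f$ and some lift $\tilde{g}'$ of $g$. The plan is to rewrite both factors in terms of the fixed lifts $\tilde{f}$ and $\tilde{g}$, and then use the equivalence relation to absorb the $\Delta$-element in front of the $g$-factor.

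First, apply the second bulleted property of lift factors to the fixed lift $\tilde{f}=(\tilde{f}_1,\ldots,\tilde{f}_n)$. This gives a unique $\delta_1\in\Delta$ and $i\in\{1,\ldots,n\}$ with $\tilde{f}'_k=\delta_1\tilde{f}_i$. Applying the same property to the fixed lift $\tilde{g}$ gives a unique $\delta_2\in\Delta$ and $j\in\{1,\ldots,m\}$ with $\tilde{g}'_\ell=\delta_2\tilde{g}_j$.

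Next, use the definition of equivalence with $\gamma=1\in\Pi$ and $\delta=\delta_2^{-1}$. This yields
\[
(\tilde{f}'_k,\tilde{g}'_\ell)\sim(\delta_2^{-1}\tilde{f}'_k,\delta_2^{-1}\tilde{g}'_\ell)=(\delta_2^{-1}\delta_1\tilde{f}_i,\tilde{g}_j).
\]
Setting $\alpha=\delta_2^{-1}\delta_1$, the class of $(\tilde{f}'_k,\tilde{g}'_\ell)$ contains $(\alpha\tilde{f}_i,\tilde{g}_j)$, which is the required form.

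One point needs checking: the pair $(\alpha\tilde{f}_i,\tilde{g}_j)$ must itself be a lifting pair, i.e.\ both entries must be factors of genuine lifts. For $\tilde{g}_j$ this is immediate. For $\alpha\tilde{f}_i$, the first bulleted property says it is a lift factor. To see it directly, one could take the lift $\alpha\tilde{f}'$, which is the lift $\tilde{f}'$ composed with the diagonal action of $\alpha\delta_1^{-1}$ on $F_n(\tilde{Y},\Delta)$; its $k$-th factor is $\alpha\delta_1^{-1}\tilde{f}'_k=\alpha\tilde{f}_i$. Equally, $\alpha\tilde{f}$ is a lift whose $i$-th factor is $\alpha\tilde{f}_i$. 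The equivalence relation is only used between lifting pairs, and the pair obtained after multiplying by $\delta_2^{-1}$ is the pair of $k$-th and $\ell$-th factors of the lifts $\delta_2^{-1}\tilde{f}'$ and $\delta_2^{-1}\tilde{g}'$, so it is legitimate.

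There is no real obstacle here. The only step needing care is this bookkeeping that every pair involved arises from actual lifts, and it is handled by the remark that $\Delta$ acts diagonally on $F_n(\tilde{Y},\Delta)$ and commutes with $q^n$.
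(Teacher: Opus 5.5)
Your proof is correct and matches the paper's argument: write the pair as $(\delta_1\tilde{f}_i,\delta_2\tilde{g}_j)$ and use the equivalence with $\gamma=1$, $\delta=\delta_2^{-1}$ to reach $(\delta_2^{-1}\delta_1\tilde{f}_i,\tilde{g}_j)$. One small slip in your bookkeeping paragraph: the lift whose $k$-th factor is $\alpha\tilde{f}_i$ is $\alpha\delta_1^{-1}\tilde{f}'$, not $\alpha\tilde{f}'$, but your alternative remark that $\alpha\tilde{f}$ is a lift already covers this point.
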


\begin{proof}
Any lifting pair of $(f,g)$ is of the form $(\alpha\tilde{f}_i,\beta\tilde{g}_j)$; this pair is equivalent to $(\beta^{-1}\alpha\tilde{f}_i,\tilde{g}_j)$.
\end{proof}

\begin{prop}\label{prop:reidemeister1}
Two lifting pairs $(\alpha\tilde{f}_i,\tilde{g}_j)$ and $(\beta\tilde{f}_k,\tilde{g}_\ell)$ are equivalent if and only if there exists a $\gamma\in \Pi$ such that \[
\begin{cases}
i=\sigma^{-1}_\gamma(k) \text{ and } j=\tau^{-1}_\gamma(\ell), \\
\alpha=\psi_\ell(\gamma)^{-1}\beta\varphi_k(\gamma).
\end{cases}
\]
\end{prop}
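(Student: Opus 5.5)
The argument unwinds the definition of equivalence, using the identities $\tilde{f}_i\gamma=\varphi_i(\gamma)\tilde{f}_{\sigma_\gamma^{-1}(i)}$ and $\tilde{g}_j\gamma=\psi_j(\gamma)\tilde{g}_{\tau_\gamma^{-1}(j)}$ together with a uniqueness statement for lift factors. The key auxiliary fact is the following: if $\delta\tilde{f}_a=\delta'\tilde{f}_b$ for $\delta,\delta'\in\Delta$, then $a=b$ and $\delta=\delta'$. Indeed, projecting gives $q\tilde{f}_a=q\tilde{f}_b$. Since $\tilde{f}$ takes values in $F_n(\tilde{Y},\Delta)$, distinct indices have distinct projections at every point, so $a=b$. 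Then $\delta\tilde{f}_a(\tilde{x})=\delta'\tilde{f}_a(\tilde{x})$, and freeness of the $\Delta$-action gives $\delta=\delta'$. This is just the uniqueness in the second bullet point on lift factors, and the same holds for $\tilde{g}$.

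By definition, $(\alpha\tilde{f}_i,\tilde{g}_j)\sim(\beta\tilde{f}_k,\tilde{g}_\ell)$ means that there exist $\gamma\in\Pi$ and $\delta\in\Delta$ with
\[
\beta\tilde{f}_k=\delta\alpha\tilde{f}_i\gamma,\qquad \tilde{g}_\ell=\delta\tilde{g}_j\gamma .
\]
Rather than this direct form, I will use the symmetric one, $\alpha\tilde{f}_i=\delta^{-1}\beta\tilde{f}_k\gamma^{-1}$, or equivalently rename $\gamma$. The cleanest route is to rewrite the relation as $\alpha\tilde{f}_i\,=\,\delta'\beta\tilde{f}_k\gamma$ and $\tilde{g}_j=\delta'\tilde{g}_\ell\gamma$ for some $\gamma\in\Pi$, $\delta'\in\Delta$; this is legitimate because $\sim$ is symmetric. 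Substituting the structure identities gives
\[
\alpha\tilde{f}_i=\delta'\beta\varphi_k(\gamma)\tilde{f}_{\sigma_\gamma^{-1}(k)},\qquad \tilde{g}_j=\delta'\psi_\ell(\gamma)\tilde{g}_{\tau_\gamma^{-1}(\ell)} .
\]
Applying the uniqueness fact to the $g$-equation yields $j=\tau_\gamma^{-1}(\ell)$ and $\delta'=\psi_\ell(\gamma)^{-1}$. The $f$-equation then yields $i=\sigma_\gamma^{-1}(k)$ and $\alpha=\psi_\ell(\gamma)^{-1}\beta\varphi_k(\gamma)$, which is exactly the stated system.

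For the converse, given $\gamma$ satisfying these conditions, set $\delta'=\psi_\ell(\gamma)^{-1}$. The same computation read backwards shows $\delta'\beta\tilde{f}_k\gamma=\alpha\tilde{f}_i$ and $\delta'\tilde{g}_\ell\gamma=\tilde{g}_j$, so the two pairs are equivalent.

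The only delicate point is bookkeeping of the direction of equivalence, that is, which pair carries $\gamma$ and $\delta$. The formula as stated, with $\varphi_k$ and $\psi_\ell$ evaluated at the indices of the second pair, forces us to write the first pair in terms of the second. I will make this choice explicitly, invoking symmetry of $\sim$, so that the conditions come out in precisely the form stated.
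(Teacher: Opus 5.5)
Your proof is correct and is essentially the paper's argument: write $\alpha\tilde{f}_i=\delta\beta\tilde{f}_k\gamma$ and $\tilde{g}_j=\delta\tilde{g}_\ell\gamma$, expand both with the structure identities, read off the indices and $\delta=\psi_\ell(\gamma)^{-1}$ from the uniqueness of such decompositions, and reverse the computation for the converse. The only difference is that you explicitly justify two steps the paper uses silently: the appeal to symmetry of $\sim$ (the paper writes the first pair in terms of the second directly), and the uniqueness of $(\delta,a)$ in an expression $\delta\tilde{f}_a$.
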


\begin{proof}
Suppose $(\alpha\tilde{f}_i,\tilde{g}_j)\sim (\beta\tilde{f}_k,\tilde{g}_\ell)$. Then there are $\gamma\in \Pi$, $\delta\in \Delta$ such that \[
\begin{cases}
\alpha\tilde{f}_i=\delta\beta\tilde{f}_k\gamma=\delta\beta\varphi_k(\gamma)\tilde{f}_{\sigma^{-1}_\gamma(k)} \\
\tilde{g}_j=\delta\tilde{g}_\ell\gamma=\delta\psi_\ell(\gamma)\tilde{g}_{\tau^{-1}_\gamma(\ell)}.
\end{cases}
\]
It follows that $i=\sigma^{-1}_\gamma(k)$ and $\alpha=\delta\beta\varphi_k(\gamma)$, and that $j=\tau^{-1}_\gamma(\ell)$ and $1=\delta\psi_\ell(\gamma)$. From the last equation, $\delta=\psi_\ell(\gamma)^{-1}$, so that $\alpha=\psi_\ell(\gamma)^{-1}\beta\varphi_k(\gamma)$.

Conversely, if there exists a $\gamma\in \Pi$ such that $i=\sigma^{-1}_\gamma(k)$, $j=\tau^{-1}_\gamma(\ell)$ and $\alpha=\psi_\ell(\gamma)^{-1}\beta\varphi_k(\gamma)$, then \[
\begin{gathered}
\alpha\tilde{f}_i=\psi_\ell(\gamma)^{-1}\beta\varphi_k(\gamma)\tilde{f}_{\sigma^{-1}_\gamma(k)}=\psi_\ell(\gamma)^{-1}\beta\tilde{f}_k\gamma
\\
\tilde{g}_j=\tilde{g}_{\tau^{-1}_\gamma(\ell)}=\psi_\ell(\gamma)^{-1}\psi_\ell(\gamma)\tilde{g}_{\tau^{-1}_\gamma(\ell)}=\psi_\ell(\gamma)^{-1}\tilde{g}_\ell\gamma
\end{gathered}
\]
so $(\alpha\tilde{f}_i,\tilde{g}_j)\sim (\beta\tilde{f}_k,\tilde{g}_\ell)$.
\end{proof}

By \eqref{eq:phi-morf}, the restrictions $\varphi_i|_{\{\gamma\in \Pi \mid \sigma_\gamma(i)=i \}}$ and $\psi_j|_{\{\gamma\in \Pi \mid \tau_\gamma(j)=j \}}$ are group morphisms. In particular, so are the restrictions $\varphi_i|_{S_{ij}}$ and $\psi_j|_{S_{ij}}$, where \[
S_{ij}\vcentcolon=\{\gamma\in \Pi \mid \sigma_\gamma(i)=i \text{ and } \tau_\gamma(j)=j \}.
\]
We will express the equivalence of lifting pairs in terms of these morphisms.

We define an equivalence relation $\sim_{\sigma,\tau}$ on $\{1,\ldots,n\}\times\{1,\ldots,m\}$ by \[
(i,j) \sim_{\sigma,\tau} (k,\ell) \iff \exists \gamma\in \Pi: k=\sigma_\gamma(i) \text{ and } \ell=\tau_\gamma(j)
\]
and, for all $i,j$, an equivalence relation $\sim_{\varphi_i,\psi_j}$ on $\Delta$ by \[
\alpha \sim_{\varphi_i,\psi_j} \beta \iff \exists \gamma\in S_{ij}:\alpha=\psi_j(\gamma^{-1})\beta\varphi_i(\gamma).
\]

\begin{prop}
If $(i,j) \nsim_{\sigma,\tau} (k,\ell)$, then $(\alpha\tilde{f}_i,\tilde{g}_j)\nsim (\beta\tilde{f}_k,\tilde{g}_\ell)$ for all $\alpha,\beta\in \Delta$. If $(i,j) \sim_{\sigma,\tau} (k,\ell)$, then \[
\{[(\alpha\tilde{f}_i,\tilde{g}_j)] \mid \alpha\in \Delta \} = \{[(\alpha\tilde{f}_k,\tilde{g}_\ell)] \mid \alpha\in \Delta \},
\]
and for a fixed pair $(i,j)$, \[
(\alpha\tilde{f}_i,\tilde{g}_j)\sim (\beta\tilde{f}_i,\tilde{g}_j) \iff \alpha\sim_{\varphi_i,\psi_j} \beta.
\]
\end{prop}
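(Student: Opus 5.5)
All three claims follow from Proposition~\ref{prop:reidemeister1}, which characterises $(\alpha\tilde{f}_i,\tilde{g}_j)\sim(\beta\tilde{f}_k,\tilde{g}_\ell)$ by the existence of $\gamma\in\Pi$ with $i=\sigma_\gamma^{-1}(k)$, $j=\tau_\gamma^{-1}(\ell)$ and $\alpha=\psi_\ell(\gamma)^{-1}\beta\varphi_k(\gamma)$. I would treat the three statements in order.

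For the first statement, I argue by contraposition. If $(\alpha\tilde{f}_i,\tilde{g}_j)\sim(\beta\tilde{f}_k,\tilde{g}_\ell)$, Proposition~\ref{prop:reidemeister1} gives $\gamma$ with $k=\sigma_\gamma(i)$ and $\ell=\tau_\gamma(j)$. Hence $(i,j)\sim_{\sigma,\tau}(k,\ell)$.

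For the second statement, suppose $k=\sigma_\gamma(i)$ and $\ell=\tau_\gamma(j)$. Given $\beta\in\Delta$, put $\alpha=\psi_\ell(\gamma)^{-1}\beta\varphi_k(\gamma)$. The converse direction of Proposition~\ref{prop:reidemeister1} then gives $(\alpha\tilde{f}_i,\tilde{g}_j)\sim(\beta\tilde{f}_k,\tilde{g}_\ell)$. This proves the inclusion $\supseteq$. The inclusion $\subseteq$ follows by symmetry, since $\sim_{\sigma,\tau}$ is symmetric (use $\gamma^{-1}$).

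For the third statement, take $k=i$ and $\ell=j$ in Proposition~\ref{prop:reidemeister1}. The condition becomes: there is $\gamma$ with $\sigma_\gamma(i)=i$ and $\tau_\gamma(j)=j$, i.e.\ $\gamma\in S_{ij}$, such that $\alpha=\psi_j(\gamma)^{-1}\beta\varphi_i(\gamma)$. The only step needing care is rewriting $\psi_j(\gamma)^{-1}$ as $\psi_j(\gamma^{-1})$. This uses that $\psi_j|_{S_{ij}}$ is a group morphism, by \eqref{eq:phi-morf}: the identity gives $\psi_j(1)=\psi_j(1)\psi_{j}(1)$, so $\psi_j(1)=1$, and then $1=\psi_j(\gamma\gamma^{-1})=\psi_j(\gamma)\psi_{\tau_\gamma^{-1}(j)}(\gamma^{-1})=\psi_j(\gamma)\psi_j(\gamma^{-1})$. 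With this rewriting the condition is exactly $\alpha\sim_{\varphi_i,\psi_j}\beta$, which gives the equivalence.

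There is no real obstacle; the inverse identity for $\psi_j$ is the only non-tautological point.
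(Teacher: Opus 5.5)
Your proof is correct and follows the paper's argument: all three parts are read off from Proposition~\ref{prop:reidemeister1}, and symmetry handles the reverse inclusion. The only differences are cosmetic. You prove the inclusion $\supseteq$ directly by choosing $\alpha$ from $\beta$, whereas the paper proves $\subseteq$ by choosing $\beta$ from $\alpha$. You also spell out why $\psi_j(\gamma)^{-1}=\psi_j(\gamma^{-1})$ holds on $S_{ij}$, which the paper simply asserts.
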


\begin{proof}
The first statement is immediate from Proposition \ref{prop:reidemeister1}.

Suppose $(i,j) \sim_{\sigma,\tau} (k,\ell)$, and take $\gamma\in \Pi$ such that $k=\sigma_\gamma(i)$ and $\ell=\tau_\gamma(j)$. Then, by Proposition \ref{prop:reidemeister1}, \[
(\alpha\tilde{f}_i,\tilde{g}_j)\sim (\psi_\ell(\gamma)\alpha\varphi_k(\gamma)^{-1}\tilde{f}_k,\tilde{g}_\ell),
\]
so $(\alpha\tilde{f}_i,\tilde{g}_j)\in \{[(\beta\tilde{f}_k,\tilde{g}_\ell)] \mid \beta\in \Delta \}$. The converse inclusion follows by symmetry. 

For a fixed pair $(i,j)$, Proposition \ref{prop:reidemeister1} states that \[
(\alpha\tilde{f}_i,\tilde{g}_j)\sim (\beta\tilde{f}_i,\tilde{g}_j) \iff \exists \gamma\in \Pi: \begin{cases}
i=\sigma_\gamma(i) \text{ and } j=\tau_\gamma(j) \\
\alpha=\psi_j(\gamma)^{-1}\beta\varphi_i(\gamma).
\end{cases}
\]
The first condition means precisely that $\gamma\in S_{ij}$, and since $\psi_j|_{S_{ij}}$ is a group morphism, we have $\psi_j(\gamma)^{-1}=\psi_j(\gamma^{-1})$.
\end{proof}

We write $\R[\varphi_i,\psi_j]=\Delta/\sim_{\varphi_i,\psi_j}$, and call this the set of \emph{Reidemeister classes} of the pair of morphisms $\varphi_i,\psi_j:S_{ij}\to \Delta$.

To conclude, we can write the set of all distinct coincidence classes of $(f,g)$ as \[
\{p\Coin(\alpha\tilde{f}_i,\tilde{g}_j) \mid [(i,j)]_{\sigma,\tau}; [\alpha]\in \R[\varphi_i,\psi_j] \}
\]
where $[(i,j)]_{\sigma,\tau}$ is short for $[(i,j)]\in (\{1,\ldots,n\}\times\{1,\ldots,m\})/\sim_{\sigma,\tau}$.

\begin{ex}\label{ex:coin}
Let us revisit the example we gave in \cite{coin} to illustrate that the coincidence relation of \cite{brownkolahi} is not transitive: consider the pair of $2$-valued maps of the circle whose graphs are drawn in Figure \ref{fig:ex:coin}. 
Explicitly: \[
\begin{split}
f&: \RR/\ZZ \multimap \RR/\ZZ: t+\ZZ\mapsto \{\tilde{f}_1(t)+\ZZ,\tilde{f}_2(t)+\ZZ \} \\
g&: \RR/\ZZ \multimap \RR/\ZZ: t+\ZZ\mapsto \{\tilde{g}_1(t)+\ZZ,\tilde{g}_2(t)+\ZZ \}
\end{split}
\]
where $\tilde{f}_1(t)=\frac{1}{4}$ and $\tilde{f}_2(t)=\frac{3}{4}$ for all $t$, and $\tilde{g}_1,\tilde{g}_2$ are defined on $[0,1]$ by \[
\tilde{g}_1(t)=\begin{cases}
	t & t\in [0,\frac{1}{4}] \\
	\frac{1}{4} & t\in [\frac{1}{4},\frac{1}{2}] \\
	\frac{t}{2} & t\in [\frac{1}{2},1]
\end{cases}
\qquad\qquad
\tilde{g}_2(t)=\begin{cases}
	\frac{t+1}{2} & t\in [0,\frac{1}{2}] \\
	\frac{3}{4} & t\in [\frac{1}{2},\frac{3}{4}] \\
	t & t\in [\frac{3}{4},1]
\end{cases}
\]
and extended to $\RR$ by setting $\tilde{g}_1(t+2z)=\tilde{g}_1(t)+z$, $\tilde{g}_1(t+2z+1)=\tilde{g}_2(t)+z$, $\tilde{g}_2(t+2z)=\tilde{g}_2(t)+z$ and $\tilde{g}_2(t+2z+1)=\tilde{g}_1(t)+z+1$ for all $t\in [0,1]$, $z\in \ZZ$.

\begin{figure}[h!]
\begin{tikzpicture}[scale=3]
	\draw (0,0) rectangle (1,1);
	\draw[thick,red] (0,0.25)--(1,0.25);
	\draw[thick,red] (0,0.75)--(1,0.75);
	\draw[thick,blue] (0,0)--(0.25,0.25)--(0.5,0.25)--(1,0.5);
	\draw[thick,blue] (0,0.5)--(0.5,0.75)--(0.75,0.75)--(1,1);
	\draw[red] (0.15,0.85) node{\small$f$};
	\draw[blue] (0.2,0.5) node{\small$g$};
	
	\draw[dashed] (0.25,0)node[below]{\small$\frac{1}{4}$}--(0.25,0.25);
	\draw[dashed] (0.5,0)node[below]{\small$\frac{1}{2}$}--(0.5,0.75);
	\draw[dashed] (0.75,0)node[below]{\small$\frac{3}{4}$}--(0.75,0.75);
	
	\draw[dashed] (0,0.25)node[left]{\small$\frac{1}{4}$};
	\draw[dashed] (0,0.5)node[left]{\small$\frac{1}{2}$};
	\draw[dashed] (0,0.75)node[left]{\small$\frac{3}{4}$};
\end{tikzpicture}
\caption{A pair of $2$-valued maps of the circle with intersecting coincidence classes.}
\label{fig:ex:coin}
\end{figure}
\end{ex}

In \cite{coin}, we argued geometrically that there are two coincidence classes and the double coincidence point $\frac{1}{2}+\ZZ$ belongs to both of them. We can now show this using the algebraic theory developed above.

One easily verifies that the morphisms $\varphi:\ZZ\to \ZZ^2\rtimes \S_2$ and $\psi:\ZZ\to \ZZ^2\rtimes \S_2$ induced by the lifts $(\tilde{f}_1,\tilde{f}_2)$ and $(\tilde{g}_1,\tilde{g}_2)$ are given by $\varphi(1)=(0,0;\id)$ and $\psi(1)=(0,1;(12))$ (where $(12)\in \S_2$ is the permutation that interchanges $1$ and $2$). It follows that $S_{ij}=2\ZZ$ for all $i,j\in \{1,2\}$, and that $\varphi_1(2z)=\varphi_2(2z)=0$ and $\psi_1(2z)=\psi_2(2z)=z$ for all $z\in \ZZ$.

Hence, there are two equivalence classes $\{(1,1),(1,2)\}$ and $\{(2,1),(2,2)\}$ for the relation $\sim_{\sigma,\tau}$. Let us take $(1,1)$ and $(2,2)$ as representatives. For $v,w\in \ZZ$, we have \[
\begin{split}
v\sim_{\varphi_1,\psi_1} w &\iff \exists z\in \ZZ: v=w+\varphi_1(2z)-\psi_1(2z)=w-z \\
v\sim_{\varphi_2,\psi_2} w &\iff \exists z\in \ZZ: v=w+\varphi_2(2z)-\psi_2(2z)=w-z.
\end{split}
\]
For both relations, there is only one equivalence class.
Hence there are two lifting classes \[
\begin{split}
[(\tilde{f}_1,\tilde{g}_1)]&=\{(\tilde{f}_1+z,\tilde{g}_i) \mid z\in \ZZ, i\in \{1,2\} \} \\
[(\tilde{f}_2,\tilde{g}_2)]&=\{(\tilde{f}_2+z,\tilde{g}_i) \mid z\in \ZZ, i\in \{1,2\} \}.
\end{split}
\]
The corresponding coincidence classes are \[
\begin{split}
p\Coin(\tilde{f}_1,\tilde{g}_1)&=\{t+\ZZ \mid t\in [\tfrac{1}{4},\tfrac{1}{2}]\} \\
p\Coin(\tilde{f}_2,\tilde{g}_2)&=\{t+\ZZ \mid t\in [\tfrac{1}{2},\tfrac{3}{4}]\},
\end{split}
\]
consistent with our intuition from \cite{coin}.

\subsection{Partitions}\label{subsec:partitions}

Recall from \cite{staecker2021} that a \emph{partition} of $n$-valued map $f:X\multimap Y$ is a set of multimaps $\{f_1,\ldots,f_k\}$, where each $f_i$ is $n_i$-valued with $n_1+\ldots+n_k=n$, such that $f(x)=f_1(x)\cup\ldots\cup f_k(x)$ for all $x\in X$. An $n$-valued map $f$ is called \emph{irreducible} if there exists no partition of $f$ into smaller-valued maps. In analogy with the results of \cite{staecker2021} for fixed points, we have:

\begin{lem}
If $f=\{f_1,f_2\}$ is a partition, then \[
R(f,g)=R(f_1,g)+R(f_2,g).
\]
\end{lem}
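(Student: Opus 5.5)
The plan is to compare lifting pairs directly, using the observation that lifts of a partitioned map are assembled from lifts of its pieces. Let $f=\{f_1,f_2\}$ with $f_1$ an $n_1$-valued map and $f_2$ an $n_2$-valued map, $n_1+n_2=n$. Fix lifts $\tilde{f}^{(1)}=(\tilde{f}^{(1)}_1,\ldots,\tilde{f}^{(1)}_{n_1})$ of $f_1$ and $\tilde{f}^{(2)}=(\tilde{f}^{(2)}_1,\ldots,\tilde{f}^{(2)}_{n_2})$ of $f_2$.

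The first step is to check that the concatenation $(\tilde{f}^{(1)}_1,\ldots,\tilde{f}^{(1)}_{n_1},\tilde{f}^{(2)}_1,\ldots,\tilde{f}^{(2)}_{n_2})$ is a lift of $f$. It lifts $f$ because $f(x)=f_1(x)\cup f_2(x)$. Since $|f(x)|=n=n_1+n_2$, this union is disjoint, so the $q$-images of the $n$ factors at each point are pairwise distinct. Hence the concatenated map lands in $F_n(\tilde{Y},\Delta)$.

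From the bulleted facts on lift factors, every lift factor of $f$ is then of the form $\delta\tilde{f}^{(1)}_i$ or $\delta\tilde{f}^{(2)}_k$. The next step is to show that the set of lift factors of $f$ is the \emph{disjoint} union of the lift factors of $f_1$ and those of $f_2$. A lift factor $h$ of $f_1$ cannot also be a lift factor of $f_2$: otherwise $q(h(\tilde{x}))$ would lie in $f_1(x)\cap f_2(x)=\varnothing$. Consequently the lifting pairs of $(f,g)$ are exactly the lifting pairs of $(f_1,g)$ together with those of $(f_2,g)$, and these two sets are disjoint.

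The remaining step is to compare the equivalence relations, which is the only point needing care. The relation $\sim$ is defined via $(\tilde{f}_i,\tilde{g}_j)\mapsto(\delta\tilde{f}_i\gamma,\delta\tilde{g}_j\gamma)$, and this action preserves the property ``the first component is a lift factor of $f_1$'', because $\delta h\gamma$ is again a lift factor of $f_1$ whenever $h$ is (first bullet). Hence no equivalence class of lifting pairs of $(f,g)$ mixes pairs from $(f_1,g)$ with pairs from $(f_2,g)$. Moreover, restricted to the pairs of $(f_\iota,g)$, the relation coincides with the equivalence of lifting pairs of $(f_\iota,g)$, since the definition involves the same $\gamma$ and $\delta$. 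So the lifting classes of $(f,g)$ are the disjoint union of those of $(f_1,g)$ and of $(f_2,g)$, and counting gives $R(f,g)=R(f_1,g)+R(f_2,g)$. Since the paper allows Reidemeister numbers to be infinite, the identity is to be read in $\ZZ_{>0}\cup\{\infty\}$.

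I expect no real obstacle. The one substantive point is the disjointness claim, that lift factors of $f_1$ and $f_2$ never coincide, and it follows from the third bullet together with $f_1(x)\cap f_2(x)=\varnothing$.
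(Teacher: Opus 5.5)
Your proof is correct, but you obtain the disjointness by a different route from the paper.

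The paper fixes a lift $\tilde{f}$ of $f$ and reorders its factors so that the first $n_1$ form a lift of $f_1$. It then shows that no lifting pair of $(f_1,g)$ is equivalent to one of $(f_2,g)$. For this it cites Staecker's result that $\sigma_\gamma$ never sends an index in $\{1,\ldots,n_1\}$ to one in $\{n_1+1,\ldots,n\}$, combined with the algebraic criterion of Proposition~\ref{prop:reidemeister1}.

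You instead build the lift of $f$ by concatenating lifts of $f_1$ and $f_2$. This spares you from justifying that an arbitrary lift of $f$ can be reordered in that way. You then get disjointness directly: the set of lift factors of $f_1$ is stable under $h\mapsto\delta h\gamma$, and it is disjoint from the set of lift factors of $f_2$ because $f_1(x)\cap f_2(x)=\varnothing$.

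What your route buys:
\begin{itemize}
\item It is self-contained. It actually reproves the cited fact: for $i\le n_1$, $\tilde{f}_i\gamma=\varphi_i(\gamma)\tilde{f}_{\sigma_\gamma^{-1}(i)}$ must again be a lift factor of $f_1$, which forces $\sigma_\gamma^{-1}(i)\le n_1$.
\item You explicitly check something the paper uses tacitly: the equivalence of lifting pairs of $(f,g)$, restricted to pairs of $(f_\iota,g)$, is exactly the equivalence relation of $(f_\iota,g)$.
\end{itemize}

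What the paper's version buys is that it is phrased in the $(\sigma,\tau)$ and Reidemeister-class language used for the later computations, such as Proposition~\ref{prop:aff-R}.
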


\begin{proof}
We show that the set of lifting classes of $(f,g)$ is the disjoint union of the sets of lifting classes of $(f_1,g)$ and of $(f_2,g)$.

Fix a lift $\tilde{f}=(\tilde{f}_1,\ldots,\tilde{f}_n)$ for $f$. We may reorder the factors of $\tilde{f}$ so that $(\tilde{f}_1,\ldots,\tilde{f}_{n_1})$ is a lift for $f_1$ and $(\tilde{f}_{n_1+1},\ldots,\tilde{f}_n)$ is a lift for $f_2$ (with $n_1$ such that $f_1$ is $n_1$-valued and $f_2$ is $(n-n_1)$-valued). Then the set of lifting classes of $(f_1,g)$ is \[
\{[(\alpha\tilde{f}_i,\tilde{g}_j)] \mid \alpha\in \Delta, i\in \{1,\ldots,n_1\},j\in\{1,\ldots,m\} \}
\]
and the set of lifting classes of $(f_2,g)$ is \[
\{[(\alpha\tilde{f}_i,\tilde{g}_j)] \mid \alpha\in \Delta, i\in \{n_1+1,\ldots,n\},j\in\{1,\ldots,m\} \}.
\]
Clearly, the union of these sets is the set of lifting classes of $(f,g)$. To show they are disjoint, let $\varphi=(\varphi_1,\ldots,\varphi_n;\sigma)$ be the morphism induced by $\tilde{f}$. It is shown in \cite{staecker2021} that for $i\in \{1,\ldots,n_1\}$ and $k\in \{n_1+1,\ldots,n\}$, there is no $\gamma\in \Pi$ such that $\sigma_\gamma(i)=k$. Hence for such $i$ and $k$, the lifting pairs $(\alpha\tilde{f}_i,\tilde{g}_j)$ and $(\beta\tilde{f}_k,\tilde{g}_\ell)$ are inequivalent for all $j,\ell$, $\alpha$ and $\beta$.
\end{proof}

\begin{cor}
If $f=\{f_1,\ldots,f_k\}$ and $g=\{g_1,\ldots,g_\ell\}$ are the partitions into irreducibles, then \[
R(f,g)=\sum_{i=1}^k \sum_{j=1}^\ell R(f_i,g_j).
\]
\end{cor}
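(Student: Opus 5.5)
The plan is to apply the preceding lemma repeatedly, once to split $f$ and once to split $g$. The lemma as stated only splits the first map. So we first need the symmetric version: if $g=\{g_1,g_2\}$ is a partition, then
\[
R(f,g)=R(f,g_1)+R(f,g_2).
\]

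\emph{The symmetric lemma.} The proof mirrors the one just given. Fix a lift $\tilde{g}=(\tilde{g}_1,\ldots,\tilde{g}_m)$ and reorder its factors so that the first $m_1$ form a lift of $g_1$ and the remaining ones a lift of $g_2$. Every lifting class of $(f,g)$ contains a pair $(\alpha\tilde{f}_i,\tilde{g}_j)$, so the set of lifting classes of $(f,g)$ is the union of those of $(f,g_1)$ and $(f,g_2)$. Here one should note that a lifting class of $(f,g_1)$ is literally a subset of lifting pairs of $(f,g)$, because lift factors of $g_1$ are lift factors of $g$. Equivalence is the same relation in both settings, so this identification is legitimate.

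Disjointness follows as before. By the result of \cite{staecker2021} applied to $\tau$, there is no $\gamma\in\Pi$ with $\tau_\gamma(j)=\ell$ when $j\le m_1<\ell$. Proposition \ref{prop:reidemeister1} then shows that pairs $(\alpha\tilde{f}_i,\tilde{g}_j)$ and $(\beta\tilde{f}_k,\tilde{g}_\ell)$ with such $j,\ell$ are never equivalent.

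\emph{Induction.} By induction on $k$, using the lemma with $f=\{f_1,\{f_2,\ldots,f_k\}\}$, we get
\[
R(f,g)=\sum_{i=1}^k R(f_i,g).
\]
This needs $\{f_2,\ldots,f_k\}$, taken as a union, to define a continuous $(n-n_1)$-valued map partitioned by $f_2,\ldots,f_k$. That holds because a finite union of continuous multimaps with pairwise disjoint values is again a continuous $(n_2+\cdots+n_k)$-valued map. Irreducibility is not actually needed; any partitions would give the formula.

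Then for each $i$ we apply the symmetric lemma inductively to the partition $g=\{g_1,\ldots,g_\ell\}$ with $f_i$ in place of $f$. This gives
\[
R(f_i,g)=\sum_{j=1}^\ell R(f_i,g_j),
\]
and summing over $i$ yields the corollary.

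The only mildly delicate step is the justification of the symmetric lemma. We have to check that the relevant fact from \cite{staecker2021} about the permutation morphism applies equally to $\tau$. It does, since that fact concerns only a single multivalued map and its partition. Everything else is bookkeeping.
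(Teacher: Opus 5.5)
Your proposal is correct and follows the same route the paper intends: the corollary comes from iterating the partition lemma in $f$, together with its mirror version in $g$. You justify that mirror version correctly via Proposition \ref{prop:reidemeister1} and the fact from \cite{staecker2021} applied to $\tau$, and your side remarks (that $f_2\cup\cdots\cup f_k$ is again a continuous multimap, and that irreducibility is not needed) are also correct.
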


\subsection{Multiple coincidences}

We saw in Proposition \ref{prop:equiv-lifts} that a point $x$ for which $f(x)\cap g(x)$ is a singleton can only belong to one coincidence class. In general, we can relate the number of coincidence classes containing $x$ to the number of elements in $f(x)\cap g(x)$, i.e., the multiplicity of $x$. 

\begin{lem}
Fix lifts $\tilde{f}$ for $f$ and $\tilde{g}$ for $g$. For $x\in X$, for any fixed $\tilde{x}\in p^{-1}(x)$, the number of sets $\Coin(\alpha\tilde{f}_i,\tilde{g}_j)$ containing $\tilde{x}$ is equal to the multiplicity of $x$.
\end{lem}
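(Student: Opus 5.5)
We will construct an explicit bijection between the set of triples $(\alpha,i,j)\in\Delta\times\{1,\ldots,n\}\times\{1,\ldots,m\}$ with $\tilde{x}\in\Coin(\alpha\tilde{f}_i,\tilde{g}_j)$ and the set $f(x)\cap g(x)$. Each such triple gives a distinct set $\Coin(\alpha\tilde{f}_i,\tilde{g}_j)$, since the pairs $(\alpha\tilde{f}_i,\tilde{g}_j)$ are distinct lifting pairs; the sets are indexed by these pairs. The natural map is
\[
\Phi:(\alpha,i,j)\mapsto q(\alpha\tilde{f}_i(\tilde{x}))=q(\tilde{f}_i(\tilde{x}))=q(\tilde{g}_j(\tilde{x})).
\]
Its image lies in $f(x)\cap g(x)$ because $f(x)=\{q\tilde{f}_1(\tilde{x}),\ldots,q\tilde{f}_n(\tilde{x})\}$ and $g(x)=\{q\tilde{g}_1(\tilde{x}),\ldots,q\tilde{g}_m(\tilde{x})\}$, exactly as in the first lemma of this section.

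For surjectivity, take $y\in f(x)\cap g(x)$. We choose $i$ with $q\tilde{f}_i(\tilde{x})=y$ and $j$ with $q\tilde{g}_j(\tilde{x})=y$. Then both $\tilde{f}_i(\tilde{x})$ and $\tilde{g}_j(\tilde{x})$ lie in the fibre $q^{-1}(y)$, so some $\alpha\in\Delta$ satisfies $\alpha\tilde{f}_i(\tilde{x})=\tilde{g}_j(\tilde{x})$. This triple maps to $y$, by the same argument as in the proof of the first lemma.

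For injectivity, suppose $(\alpha,i,j)$ and $(\beta,k,\ell)$ both map to $y$.
\begin{itemize}
\item Since $\tilde{f}(\tilde{x})\in F_n(\tilde{Y},\Delta)$, the points $q\tilde{f}_1(\tilde{x}),\ldots,q\tilde{f}_n(\tilde{x})$ are pairwise distinct. Hence $q\tilde{f}_i(\tilde{x})=q\tilde{f}_k(\tilde{x})$ forces $i=k$.
\item In the same way, the $q\tilde{g}_j(\tilde{x})$ are pairwise distinct, which forces $j=\ell$.
\item Then $\alpha\tilde{f}_i(\tilde{x})=\tilde{g}_j(\tilde{x})=\beta\tilde{f}_i(\tilde{x})$. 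Since $\Delta$ acts freely on $\tilde{Y}$, this gives $\alpha=\beta$.
\end{itemize}
Thus $\Phi$ is a bijection, and the count of sets containing $\tilde{x}$ equals $|f(x)\cap g(x)|$.

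The only delicate point is the injectivity step. It rests on two facts: distinctness of the projected factors (the configuration space condition) and freeness of the deck action. We also need to keep in mind that we are counting lifting pairs $(\alpha\tilde{f}_i,\tilde{g}_j)$, not subsets of $\tilde{X}$. Distinct triples give distinct pairs, because each lift factor of $f$ is $\delta\tilde{f}_i$ for a unique $\delta$ and $i$, so no further identification occurs.
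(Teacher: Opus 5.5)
Your proof is correct and matches the paper's argument: the paper's ``at least $r$'' step is your surjectivity (choosing $\alpha\in\Delta$ via the fibre $q^{-1}(y)$), and its ``at most $r$'' step is your injectivity, deducing $j=\ell$, then $i=k$, then $\alpha=\beta$ from the distinctness of the projected factors and the freeness of the $\Delta$-action. Your remark that one counts the sets indexed by triples $(\alpha,i,j)$ rather than distinct subsets of $\tilde{X}$ is also the reading the paper implicitly uses.
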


\begin{proof}
Let $|f(x)\cap g(x)|=r$, and fix $\tilde{x}\in p^{-1}(x)$. Given the fixed lifts $\tilde{f}$ and $\tilde{g}$, we can write \[
\begin{split}
f(x)&=\{q(\tilde{f}_1(\tilde{x})),\ldots,q(\tilde{f}_n(\tilde{x}))\} \\
g(x)&=\{q(\tilde{g}_1(\tilde{x})),\ldots,q(\tilde{g}_m(\tilde{x}))\}.
\end{split}
\]
Then there are indices $i_1,\ldots,i_r$ and $j_1,\ldots,j_r$ (pairwise distinct) such that $q(\tilde{f}_{i_\ell}(\tilde{x}))=q(\tilde{g}_{j_\ell}(\tilde{x}))$ for all $\ell\in \{1,\ldots,r\}$. For each $\ell$, it follows that there is an $\alpha_\ell\in \Delta$ such that $\alpha_\ell\tilde{f}_{i_\ell}(\tilde{x})=\tilde{g}_{i_\ell}(\tilde{x})$. Hence the number of sets $\Coin(\alpha\tilde{f}_i,\tilde{g}_j)$ containing $\tilde{x}$ is at least $r$.

To show that it is at most $r$, we show that each triple $(\alpha,i,j)$ for which $\tilde{x}\in \Coin(\alpha\tilde{f}_i,\tilde{g}_j)$, corresponds to a different point $q(\alpha\tilde{f}_i(\tilde{x}))=q(\tilde{g}_j(\tilde{x}))$ in $f(x)\cap g(x)$. Indeed, suppose two such triples $(\alpha,i,j)$ and $(\beta,k,\ell)$ correspond to the same point. From $q(\tilde{g}_j(\tilde{x}))=q(\tilde{g}_\ell(\tilde{x}))$ it follows that $j=\ell$; from \[
q(\tilde{f}_i(\tilde{x}))=q(\alpha\tilde{f}_i(\tilde{x}))=q(\beta\tilde{f}_k(\tilde{x}))=q(\tilde{f}_k(\tilde{x}))
\]
it follows that $i=k$; and then we have $\alpha\tilde{f}_i(\tilde{x})=\tilde{g}_j(\tilde{x})=\beta\tilde{f}_i(\tilde{x})$, from which it follows that $\alpha=\beta$.
\end{proof}

\begin{cor}\label{cor:multiple-coin}
For $x\in X$, the number of coincidence classes containing $x$ is at most equal to the multiplicity of $x$.
\end{cor}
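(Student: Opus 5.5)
The corollary follows from the preceding lemma once we observe that every coincidence class containing $x$ is witnessed by a set $\Coin(\alpha\tilde{f}_i,\tilde{g}_j)$ containing the \emph{fixed} point $\tilde{x}$. Fix lifts $\tilde{f}$ and $\tilde{g}$, and fix $\tilde{x}\in p^{-1}(x)$. By the lemma preceding Proposition~\ref{prop:reidemeister1}, every lifting class contains a pair of the form $(\alpha\tilde{f}_i,\tilde{g}_j)$. Let $C$ be a coincidence class containing $x$, labeled by a lifting class $L$. I will construct a map sending $C$ to a triple $(\alpha,i,j)$ with $\tilde{x}\in\Coin(\alpha\tilde{f}_i,\tilde{g}_j)$ and $(\alpha\tilde{f}_i,\tilde{g}_j)\in L$. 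Since distinct classes have distinct labels $L$, and a pair lies in only one lifting class, this map is injective. The lemma then bounds the number of such triples by the multiplicity of $x$, which gives the corollary.

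To construct the triple, pick any pair $(\tilde{f}'_k,\tilde{g}'_\ell)\in L$. Since $x\in p\Coin(\tilde{f}'_k,\tilde{g}'_\ell)$, there is a point $\tilde{x}'\in p^{-1}(x)$ with $\tilde{f}'_k(\tilde{x}')=\tilde{g}'_\ell(\tilde{x}')$. Write $\tilde{x}'=\gamma\tilde{x}$ with $\gamma\in\Pi$. The pair $(\tilde{f}'_k\gamma,\tilde{g}'_\ell\gamma)$ is again a lifting pair; it is equivalent to $(\tilde{f}'_k,\tilde{g}'_\ell)$ via $\delta=1$, and it has $\tilde{x}$ as a coincidence point. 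This is exactly the computation in the proof of Proposition~\ref{prop:equiv-lifts}(i).

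The lift factor $\tilde{g}'_\ell\gamma$ equals $\beta\tilde{g}_j$ for some $\beta\in\Delta$ and index $j$. Similarly, $\tilde{f}'_k\gamma=\beta'\tilde{f}_i$ for some $\beta'\in\Delta$ and index $i$. Applying the equivalence with $\delta=\beta^{-1}$ and $\gamma=1$ gives the pair $(\beta^{-1}\beta'\tilde{f}_i,\tilde{g}_j)$. This pair is still in $L$ and still has $\tilde{x}$ as a coincidence point, because translating both factors by $\beta^{-1}$ preserves coincidences. Set $\alpha=\beta^{-1}\beta'$.

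The only point needing care is that this choice may depend on the choices made, but that does not matter. For an upper bound we need only \emph{some} injective assignment from classes to triples. Injectivity holds because the triple determines the pair $(\alpha\tilde{f}_i,\tilde{g}_j)$, and that pair lies in the unique lifting class $L$ labeling $C$. I do not expect any real obstacle here. The main point is to work with a single fixed $\tilde{x}$ rather than with arbitrary points of the fibre, and the translation by $\gamma$ takes care of that.
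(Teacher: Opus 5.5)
Your proof is correct and follows the same route as the paper: fix $\tilde{x}$, use translation by $\gamma\in\Pi$ to replace any representative of a class containing $x$ by an equivalent pair of the form $(\alpha\tilde{f}_i,\tilde{g}_j)$ that has $\tilde{x}$ as a coincidence point, and then bound the number of classes by the number of such pairs, which the preceding lemma identifies with the multiplicity of $x$. Your renormalisation by $\beta^{-1}$, which brings $(\tilde{f}'_k\gamma,\tilde{g}'_\ell\gamma)$ back into the form $(\alpha\tilde{f}_i,\tilde{g}_j)$, spells out a step that the paper's chain of equivalences leaves implicit.
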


\begin{proof}
Fix lifts $\tilde{f}$ for $f$ and $\tilde{g}$ for $g$, and fix $\tilde{x}\in p^{-1}(x)$. The number of coincidence classes containing $x$ is equal to $|A|$, where \[
A=\{[(\alpha\tilde{f}_i,\tilde{g}_j)] \mid x\in p\Coin(\alpha\tilde{f}_i,\tilde{g}_j) \}.
\]
Note that \[
\begin{split}
[(\beta\tilde{f}_k,\tilde{g}_\ell)]\in A &\iff \exists \gamma\in \Pi: \gamma\tilde{x}\in \Coin(\beta\tilde{f}_k,\tilde{g}_\ell) \\
&\iff \exists \gamma\in \Pi: \tilde{x}\in \Coin(\beta\tilde{f}_k\gamma,\tilde{g}_\ell\gamma) \\
&\iff \exists \gamma\in \Pi: [(\beta\tilde{f}_k\gamma,\tilde{g}_\ell\gamma)] \in \{[(\alpha\tilde{f}_i,\tilde{g}_j)] \mid \tilde{x}\in \Coin(\alpha\tilde{f}_i,\tilde{g}_j) \} \\
&\iff [(\beta\tilde{f}_k,\tilde{g}_\ell)] \in \{[(\alpha\tilde{f}_i,\tilde{g}_j)] \mid \tilde{x}\in \Coin(\alpha\tilde{f}_i,\tilde{g}_j) \}
\end{split}
\]
(in the last equivalence, we used that $(\beta\tilde{f}_k\gamma,\tilde{g}_\ell\gamma)\sim (\beta\tilde{f}_k,\tilde{g}_\ell)$ for all $\gamma\in \Pi$).
Hence, \[
\begin{split}
|A|&=|\{[(\alpha\tilde{f}_i,\tilde{g}_j)] \mid \tilde{x}\in \Coin(\alpha\tilde{f}_i,\tilde{g}_j) \}| \\
&\leq |\{(\alpha\tilde{f}_i,\tilde{g}_j) \mid \tilde{x}\in \Coin(\alpha\tilde{f}_i,\tilde{g}_j) \}| \\
&=|f(x)\cap g(x)|.\qedhere
\end{split}
\]
\end{proof}


The equality does not hold in general:

\begin{ex}\label{ex:f=g}
The equality does not hold in general: let $f=g$ be the square root map on the circle (see Figure \ref{fig:f21}), i.e.\ \[
\RR/\ZZ\multimap \RR/\ZZ: t+\ZZ\mapsto \{\tilde{f}_1(t)+\ZZ,\tilde{f}_2(t)+\ZZ \}
\] 
with $\tilde{f}_1(t)=\frac{t}{2}$ and $\tilde{f}_2(t)=\frac{t}{2}+\frac{1}{2}$. Then $\Coin(f,g)=\RR/\ZZ$, and all points are double coincidences. However, there is only one non-empty coincidence class. Indeed, if $\tilde{f}_i(t)+z=\tilde{f}_j(t)$ for $i,j\in \{1,2\}$ and $z\in \ZZ$, then either $i=j=1$ and $z=0$, or $i=j=2$ and $z=0$. The two lifting pairs $(\tilde{f}_1,\tilde{f}_1)$ and $(\tilde{f}_2,\tilde{f}_2)$ are equivalent, since $\tilde{f}_2(t)=\tilde{f}_1(t+1)$ for all $t\in \RR$.

\begin{figure}[h!]
\centering
\begin{tikzpicture}[scale=2.5]
\draw[thick,red] (0,0)--(1,0.5);
\draw[thick,red] (0,0.5)--(1,1);
\draw[thick,blue] (0,0)--(1,0.5);
\draw[thick,blue] (0,0.5)--(1,1);
\draw (0,0) rectangle (1,1);
\end{tikzpicture}
\caption{Graph of the square root map on the circle.}
\label{fig:f21}
\end{figure}
\end{ex}

\begin{rmk}
As we saw in the previous subsection, coincidence classes corresponding to different factors in a partition are always distinct. In particular, if one of the maps $f$ and $g$ is split (say $f$), each point in $f(x)\cap g(x)$ is a coincidence of $g$ with a different factor of $f$, hence corresponds to a different coincidence class containing $x$. So in that case, the equality in Corollary \ref{cor:multiple-coin} does hold.
\end{rmk}

\begin{rmk}\label{rmk:multiple-coin}
The equality in Corollary \ref{cor:multiple-coin} holds if and only if \[
|\{[(\alpha\tilde{f}_i,\tilde{g}_j)] \mid \tilde{x}\in \Coin(\alpha\tilde{f}_i,\tilde{g}_j) \}|
=|\{(\alpha\tilde{f}_i,\tilde{g}_j) \mid \tilde{x}\in \Coin(\alpha\tilde{f}_i,\tilde{g}_j) \}|
\]
for some (and equivalently, any) fixed $\tilde{x}\in p^{-1}(x)$. This means that, if $(\alpha,i,j)$ and $(\beta,k,\ell)$ are so that $\alpha\tilde{f}_i(\tilde{x})=\tilde{g}_j(\tilde{x})$ and $\beta\tilde{f}_k(\tilde{x})=\tilde{g}_\ell(\tilde{x})$ for some $\tilde{x}\in p^{-1}(x)$, then $(\alpha\tilde{f}_i,\tilde{g}_j)\sim (\beta\tilde{f}_k,\tilde{g}_\ell)$ only if $(\alpha,i,j)=(\beta,k,\ell)$.
\end{rmk}

\subsection{Affine maps of the circle}\label{subsec:cc-aff}

As far as maps of the circle are concerned, Example \ref{ex:f=g} is kind of an exception. Recall from \cite{brown2006} that any $n$-valued map on the circle is homotopic to an affine map \[
f_{n,d}:\RR/\ZZ\multimap \RR/\ZZ: t+\ZZ\mapsto \left\{\tfrac{d}{n}t+\ZZ, \tfrac{d}{n}t+\tfrac{1}{n}+\ZZ, \ldots, \tfrac{d}{n}t+\tfrac{n-1}{n}+\ZZ \right\}
\]
for some $d\in \ZZ$.

\begin{prop}\label{prop:aff-multiple}
For a pair of affine maps $(f,g)=(f_{n,d},f_{m,e})$ with $\frac{d}{n}\neq \frac{e}{m}$, all coincidence points have the same multiplicity $\gcd(n,m)$, and every coincidence point $x$ belongs to precisely $|f(x)\cap g(x)|$ classes.
\end{prop}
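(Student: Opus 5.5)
The plan has two parts: first compute the multiplicity directly from the formulas, then verify the criterion of Remark \ref{rmk:multiple-coin} using explicit lifts and their induced morphisms.

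\textbf{Multiplicity.} Fix the lifts $\tilde f_i(t)=\frac{d}{n}t+\frac{i-1}{n}$ for $i=1,\ldots,n$ and $\tilde g_j(t)=\frac{e}{m}t+\frac{j-1}{m}$ for $j=1,\ldots,m$, with $\tilde X=\tilde Y=\RR$ and $\Pi=\Delta=\ZZ$. The set $f(t+\ZZ)$ is a coset of the subgroup $\frac1n\ZZ/\ZZ$, and $g(t+\ZZ)$ is a coset of $\frac1m\ZZ/\ZZ$. Two cosets of these subgroups are either disjoint or intersect in a coset of $\frac1n\ZZ\cap\frac1m\ZZ=\frac{1}{\gcd(n,m)}\ZZ$ modulo $\ZZ$. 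Such an intersection has exactly $\gcd(n,m)$ elements. Hence every coincidence point has multiplicity $\gcd(n,m)$. Note that this step does not use $\frac dn\neq\frac em$.

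\textbf{Classes through a point.} By Remark \ref{rmk:multiple-coin}, it suffices to fix $t\in\RR$ and show the following: if $(\alpha,i,j)\neq(\beta,k,\ell)$ both satisfy $\alpha+\tilde f_i(t)=\tilde g_j(t)$ and $\beta+\tilde f_k(t)=\tilde g_\ell(t)$, then the corresponding lifting pairs are inequivalent. Write $\tilde f_i+\alpha$ for the translate.

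Suppose they were equivalent, say $\beta+\tilde f_k=\delta+(\alpha+\tilde f_i)\circ(\cdot+\gamma)$ and $\tilde g_\ell=\delta+\tilde g_j\circ(\cdot+\gamma)$ with $\gamma,\delta\in\ZZ$. Then $\gamma$ is a coincidence of these translated affine lift factors. Evaluating the two equalities at $t$ gives
\[
\beta+\tilde f_k(t)=\delta+\alpha+\tilde f_i(t)+\tfrac dn\gamma,\qquad \tilde g_\ell(t)=\delta+\tilde g_j(t)+\tfrac em\gamma .
\]
Subtracting, and using $\alpha+\tilde f_i(t)=\tilde g_j(t)$ and $\beta+\tilde f_k(t)=\tilde g_\ell(t)$, we obtain $0=(\frac dn-\frac em)\gamma$. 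Since $\frac dn\neq\frac em$, this forces $\gamma=0$. Then $\tilde g_\ell=\delta+\tilde g_j$. Since distinct $\tilde g_j$ differ by $\frac{j-\ell}{m}$ with $|j-\ell|<m$, this forces $\ell=j$ and $\delta=0$. Similarly $\beta+\tilde f_k=\alpha+\tilde f_i$ forces $k=i$ and $\alpha=\beta$. This contradicts the assumption that the triples are distinct.

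\textbf{Main obstacle.} The only delicate point is writing the equivalence concretely with translations: deck transformations act by $\gamma$ on $\tilde X$ and by $\delta$ on $\tilde Y$. It also requires the fact that a lift factor, being affine, satisfies $\tilde f_i(t+\gamma)=\tilde f_i(t)+\frac dn\gamma$ as real-valued functions. This holds directly from the formula, since the affine lift factors are defined on all of $\RR$. This is where the hypothesis $\frac dn\neq\frac em$ enters; Example \ref{ex:f=g} shows it is necessary.
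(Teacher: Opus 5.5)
Your proof is correct and follows the same strategy as the paper: compute the multiplicity directly, then show that two distinct lifting pairs coinciding at the same point $t$ are inequivalent, because an equivalence would force $(\frac{d}{n}-\frac{e}{m})\gamma=0$ and hence a trivial translation. Your coset-intersection count and your subtraction at the common coincidence point are slightly cleaner than the paper's explicit enumeration of the $r$ index pairs $([k+sa]_n,[\ell+sb]_m)$ with lifts re-chosen around $t$. One small fix: the remark that $\gamma$ is a coincidence point is unjustified, but nothing uses it, so just delete it.
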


\begin{proof}
Let $r=\gcd(n,m)$, and write $n=ar$ and $m=br$. Note that a point $t+\ZZ\in \RR/\ZZ$ is a multiple coincidence point if and only if there exist multiple pairs $(k,\ell)\in \{0,\ldots,n-1\}\times\{0,\ldots,m-1\}$ such that \[
\frac{d}{n}t+\frac{k}{n} \equiv \frac{e}{m}t+\frac{\ell}{m} \mod \ZZ.
\]
Two pairs $(k,\ell)$ and $(k',\ell')$ satisfy this condition if and only if \[
\frac{dm-en}{nm}t \equiv \frac{\ell n-km}{nm} \mod \ZZ \qquad\text{and}\qquad \frac{dm-en}{nm}t \equiv \frac{\ell' n-k'm}{nm} \mod \ZZ.
\] 
Under the assumption $\frac{d}{n}\neq\frac{e}{m}$, this is satisfied for some $t\in \RR$ if and only if \[
(\ell-\ell')n \equiv (k-k')m \mod mn.
\] 
For $(\ell-\ell')n$ to be a multiple of $m$, we must have that $\ell-\ell'$ is a multiple of $b$, say $\ell-\ell'=bs$ with $s\in \ZZ$. It follows that $k-k'\equiv as \mod n$.
Taking into account that $k,k'\in \{0,\ldots,n-1\}$ and $\ell,\ell'\in \{0,\ldots,m-1\}$, all possible pairs are
\[
(k',\ell')\in \{(k,\ell),([k+a]_n,[\ell+b]_m),\ldots,([k+(r-1)a]_n,[\ell+(r-1)b]_m)\}
\] 
where we use $[\,\cdot\,]_n$ (resp.\ $[\,\cdot\,]_m$) to denote the remainder mod $n$ (resp.\ $m$).

Thus, each coincidence point is precisely $r$-fold. If $z\in \ZZ$ is such that \[
\frac{d}{n}t+\frac{k}{n}+z = \frac{e}{m}t+\frac{\ell}{m},
\]
then the sets $\Coin(\alpha\tilde{f}_i,\tilde{g}_j)$ containing $t$ are \[
\Coin(\tilde{f}_k,\tilde{g}_\ell),\enskip \Coin(\tilde{f}_{[k+a]_n},\tilde{g}_{[\ell+b]_m}),\enskip \ldots,\enskip \Coin(\tilde{f}_{[k+(r-1)a]_n},\tilde{g}_{[\ell+(r-1)b]_m})
\]
where we choose lifts $\tilde{f}$ and $\tilde{g}$ such that $\tilde{f}_{[k+sa]_n}(t)=\frac{d}{n}t+\frac{k+sa}{n}+z$ and $\tilde{g}_{[\ell+sb]_m}(t)=\frac{e}{m}t+\frac{\ell+sb}{m}$ for all $s\in \{0,\ldots,r-1\}$ (which we can do, since $[k+sa]_n$ is distinct for all these values of $s$, and similarly for $\ell+sb$).

Suppose two lifting pairs $(\tilde{f}_{[k+sa]_n},\tilde{g}_{[\ell+sb]_m})$ and $(\tilde{f}_{[k+s'a]_n},\tilde{g}_{[\ell+s'b]_m})$ are equivalent, for some $s,s'\in \{0,\ldots,r-1\}$. Then there exist $v,w\in \ZZ$ such that for all $t\in \RR$, \[
\begin{split}
&\begin{cases}
	\tilde{f}_{[k+sa]_n}(t+v)=\tilde{f}_{[k+s'a]_n}(t)+w \\
	\tilde{g}_{[\ell+sb]_m}(t+v)=\tilde{g}_{[\ell+s'b]_m}(t)+w
\end{cases}\\
&\iff
\begin{cases}
	\frac{d}{n}(t+v) + \frac{k+sa}{n} + z = \frac{d}{n}t + \frac{k+s'a}{n} + z + w \\
	\frac{e}{m}(t+v) + \frac{\ell+sb}{m} = \frac{e}{m}t + \frac{\ell+s'b}{m} + w.
\end{cases} \\
&\iff
\begin{cases}
	\frac{d}{n}v+\frac{(s-s')a}{n}=w \\
	\frac{e}{m}v+\frac{(s-s')b}{m}=w
\end{cases} \\
&\iff
\begin{cases}
	dv=n(w-\frac{s-s'}{r}) \\
	ev=m(w-\frac{s-s'}{r}).
\end{cases}
\end{split}
\]
If $s\neq s'$, then $\frac{s-s'}{r}$ cannot be equal to $w\in \ZZ$, since $s,s'\in \{0,\ldots,r-1\}$. Hence it follows that $\frac{d}{n}=\frac{e}{m}$.
\end{proof}

\begin{prop}\label{prop:aff-R}
For a pair of affine maps $(f,g)=(f_{n,d},f_{m,e})$ with $\frac{d}{n}\neq\frac{e}{m}$, there are $|dm-en|$ coincidence classes, each of which contains a single point.
\end{prop}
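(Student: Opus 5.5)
The plan is to compute everything explicitly on the universal cover $\RR$. I fix the lifts $\tilde{f}_k(t)=\frac{d}{n}t+\frac{k}{n}$ for $k\in\{0,\ldots,n-1\}$ and $\tilde{g}_\ell(t)=\frac{e}{m}t+\frac{\ell}{m}$ for $\ell\in\{0,\ldots,m-1\}$; indices are read mod $n$ and mod $m$, and I use the convention $\tilde{f}_{k+n}=\tilde{f}_k+1$. By the lemma that every lifting class contains a pair $(\alpha\tilde{f}_k,\tilde{g}_\ell)$, each lifting pair can be written as $(\tilde{f}_k+z,\tilde{g}_\ell)$ with $z\in\ZZ$. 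The first step is to note that, since $\frac{d}{n}\neq\frac{e}{m}$, the difference $\tilde{f}_k+z-\tilde{g}_\ell$ is a nonconstant affine function. So each set $\Coin(\tilde{f}_k+z,\tilde{g}_\ell)$ is exactly one point of $\RR$, namely
\[
t=\frac{\ell n-km-zmn}{dm-en}.
\]
Projecting to the circle, every coincidence class is a single point. In particular every class is nonempty, so it only remains to count the lifting classes, i.e.\ to show $R(f,g)=|dm-en|$.

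For the count, I will absorb $z$ into the index. Since $\tilde{f}_k+z=\tilde{f}_{k+zn}$, the lifting pairs are exactly the pairs $(\tilde{f}_K,\tilde{g}_L)$ with $K\in\ZZ$ and $L\in\{0,\ldots,m-1\}$. Extending similarly $\tilde{g}_{L+m}=\tilde{g}_L+1$, the lifting pairs correspond to $(K,L)\in\ZZ^2$ modulo the diagonal action $\delta=w$, which sends $(K,L)\mapsto(K+wn,L+wm)$. Next I compute the action of $\gamma=v\in\Pi=\ZZ$: $\tilde{f}_K(t+v)=\tilde{f}_{K+dv}(t)$ and $\tilde{g}_L(t+v)=\tilde{g}_{L+ev}(t)$, so $\gamma$ acts by $(K,L)\mapsto(K+dv,L+ev)$. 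By the definition of equivalence of lifting pairs, the lifting classes are therefore in bijection with $\ZZ^2/\Lambda$, where $\Lambda$ is the subgroup generated by $(n,m)$ and $(d,e)$. (One could alternatively route this through Proposition \ref{prop:reidemeister1} with explicit $\varphi,\psi$, but the lattice formulation is cleaner.)

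Finally, because $\frac{d}{n}\neq\frac{e}{m}$, the vectors $(n,m)$ and $(d,e)$ are linearly independent. Hence $\Lambda$ has finite index, equal to $|\det\begin{pmatrix} n&m\\ d&e\end{pmatrix}|=|ne-md|=|dm-en|$. This gives $R(f,g)=|dm-en|$ classes, each consisting of one point.

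The main obstacle I expect is the bookkeeping in identifying lifting pairs with $\ZZ^2$. I must check that the map $(K,L)\mapsto(\tilde{f}_K,\tilde{g}_L)$, modulo the diagonal $\Delta$-action, really is a bijection onto the lifting pairs. I must also check that the combined $\Pi\times\Delta$ action corresponds exactly to translation by $\Lambda$, with no extra identifications: distinct $K$ give distinct maps $\tilde{f}_K$, and likewise distinct $L$ give distinct $\tilde{g}_L$. As a sanity check, the count should be consistent with Proposition \ref{prop:aff-multiple}. The number of coincidence points on the circle is $|dm-en|/\gcd(n,m)$, each of multiplicity $\gcd(n,m)$ and lying in exactly that many distinct classes, which again gives $|dm-en|$ classes in total.
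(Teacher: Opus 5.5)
Your proof is correct, but it takes a more direct route than the paper. The paper first treats irreducible affine maps using its general Reidemeister machinery. It determines $\sigma$ and $\tau$ (an $n$-cycle and an $m$-cycle), finds $S_{ij}=\frac{nm}{\gcd(n,m)}\ZZ$, and counts $\gcd(n,m)$ classes for $\sim_{\sigma,\tau}$ and $\frac{|dm-en|}{\gcd(n,m)}$ classes in each $\R[\varphi_i,\psi_j]$. It then handles the general case by decomposing $f$ and $g$ into irreducible factors, using $R(f,g)=\sum_{i,j}R(f_i,g_j)$ together with homotopy invariance of $R$.

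You instead use that every lift factor of an affine map is itself one of the affine maps $\tilde{f}_K$ (resp.\ $\tilde{g}_L$). The whole $\Pi\times\Delta$-action on lifting pairs then becomes translation of $\ZZ^2$ by the lattice $\Lambda=\langle (d,e),(n,m)\rangle$. So $R(f,g)=[\ZZ^2:\Lambda]=|dm-en|$ follows uniformly, with no irreducibility criterion, no partition lemma and no orbit--stabilizer bookkeeping.

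One small imprecision: the set of \emph{all} lifting pairs is already $\{(\tilde{f}_K,\tilde{g}_L)\mid (K,L)\in\ZZ^2\}$, in bijection with $\ZZ^2$. What you call ``$\ZZ^2$ modulo the diagonal action'' is really the set of representatives $(\alpha\tilde{f}_k,\tilde{g}_\ell)$, i.e.\ lifting pairs modulo part of the equivalence relation. This is harmless, since you then quotient by the full action, and your identification of the lifting classes with $\ZZ^2/\Lambda$ is right.

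The paper's route yields structural information that is reused later: the groups $S_{ij}$, the restricted morphisms and the splitting space $\hat{X}$ all appear again in Lemma \ref{lem:aff-ind}. Your route is shorter and self-contained. The single-point part of your argument coincides with the paper's.
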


\begin{proof}
First suppose $f$ and $g$ are irreducible. Note that this is the case if and only if $\gcd(d,n)=1=\gcd(e,m)$. Let $\varphi=(\varphi_1,\ldots,\varphi_n;\sigma)$ and $\psi=(\psi_1,\ldots,\psi_n;\tau)$ be the morphisms induced by the lifts of $f$ and $g$ with factors $\tilde{f}_i(t)=\frac{d}{n}t+\frac{i}{n}$ and $\tilde{g}_j(t)=\frac{e}{m}t+\frac{j}{m}$. 
One easily verifies that, under the irreducibility assumption, $\sigma_1$ and $\tau_1$ in disjoint cycle notation are respectively an $n$-cycle and an $m$-cycle; and for all $i,j$ one has $S_{ij}=\frac{nm}{\gcd(n,m)}\ZZ$, $\varphi_i|_{S_{ij}}(z)=\frac{d}{n}z$ and $\psi_j|_{S_{ij}}(z)=\frac{e}{m}z$.
It follows that there are $\gcd(n,m)$ equivalence classes for the relation $\sim_{\sigma,\tau}$, and for all $i,j$ we have \[
\begin{split}
v\sim_{\varphi_i,\psi_j} w &\iff \exists z\in \tfrac{nm}{\gcd(n,m)}\ZZ: v=w+\tfrac{d}{n}z-\tfrac{e}{m}z \\
&\iff \exists z'\in \ZZ: v=w+\tfrac{dm-en}{nm}\tfrac{nm}{\gcd(n,m)}z' \\
&\iff v-w\in \tfrac{dm-en}{\gcd(n,m)}\ZZ.
\end{split}
\]
Under the assumption $\frac{d}{n}\neq \frac{e}{m}$, the number of classes for this relation is $\frac{|dm-en|}{\gcd(n,m)}$. Hence the total number of lifting classes is \[
\gcd(n,m)\cdot \frac{|dm-en|}{\gcd(n,m)} = |dm-en|.
\]

Now suppose $\gcd(d,n)=r$ and $\gcd(e,m)=s$. Write $d=rd_1$, $n=rn_1$, $e=se_1$ and $m=sm_1$. Then the decompositions into irreducibles are $f=\{f_0,\ldots,f_{r-1}\}$ and $g=\{g_0,\ldots,g_{s-1}\}$ with \[
\begin{split}
f_i(t+\ZZ)&=\left\{\tfrac{d_1}{n_1}t+\tfrac{i}{n}+\ZZ, \tfrac{d_1}{n_1}t+\tfrac{1}{n_1}+\tfrac{i}{n}+\ZZ, \ldots, \tfrac{d_1}{n_1}t+\tfrac{n_1-1}{n_1}+\tfrac{i}{n}+\ZZ \right\} \\
g_j(t+\ZZ)&=\left\{\tfrac{e_1}{m_1}t+\tfrac{j}{m}+\ZZ, \tfrac{e_1}{m_1}t+\tfrac{1}{m_1}+\tfrac{j}{m}+\ZZ, \ldots, \tfrac{e_1}{m_1}t+\tfrac{m_1-1}{m_1}+\tfrac{j}{m}+\ZZ \right\}
\end{split}
\]
for all $i,j$. Since $f_i\simeq f_0=f_{n_1,d_1}$ and $g_j\simeq g_0=f_{m_1,e_1}$ for all $i,j$, and since $\frac{d_1}{n_1}=\frac{d}{n}\neq \frac{e}{m}=\frac{e_1}{m_1}$, we have \[
\begin{split}
R(f,g)&=\sum_{i=0}^{r-1}\sum_{j=0}^{s-1} R(f_i,g_j) \\
&=rsR(f_{n_1,d_1},f_{m_1,e_1}) \\
&=|rs(d_1m_1-e_1n_1)| \\
&=|rd_1sm_1-se_1rn_1| \\
&=|dm-en|.
\end{split}
\]

To show that all coincidence classes contain a single point, we observe that for all $k,\ell,z$, the set $\Coin(\tilde{f}_k+z,\tilde{g}_\ell)$ consists of the single point \[
t=\frac{\ell n-km-nmz}{dm-en},
\]
hence $p\Coin(\tilde{f}_k+z,\tilde{g}_\ell)=\{t+\ZZ\}$ for this $t$.
\end{proof}


\begin{rmk}
In \cite[Theorem 5.1]{brownkolahi} it is claimed that the number of coincidence classes of $(f_{n,d},f_{m,e})$ is $\frac{|dm-en|}{\gcd(n,m)}$ rather than $|dm-en|$. This is because they do not take multiple coincidences into account; they show that there are $\frac{|dm-en|}{\gcd(n,m)}$ coincidence points, and no two points belong to the same coincidence class. But as points can belong to multiple coincidence classes, this does not imply their statement. Indeed, as we saw in Proposition \ref{prop:aff-multiple}, each of the $\frac{|dm-en|}{\gcd(n,m)}$ coincidence points belongs not to one but to $\gcd(n,m)$ distinct classes. 
\end{rmk}

\pagebreak

Note that this also gives an alternative proof of the second part of Proposition \ref{prop:aff-multiple}: since there are $|dm-en|$ distinct classes, and only $\frac{|dm-en|}{\gcd(n,m)}$ coincidence points, all $\gcd(n,m)$-fold, every coincidence point belongs to precisely as many classes as its multiplicity.

\begin{ex}\label{ex:f20-f21}
Consider the pair of affine maps $(f,g)=(f_{2,0},f_{2,1})$ on the circle (see Figure \ref{fig:f20-f21}). Explicitly, \[
\begin{split}
f:\RR/\ZZ \multimap \RR/\ZZ&: t+\ZZ \mapsto \{0+\ZZ,\tfrac{1}{2}+\ZZ \} \\
g:\RR/\ZZ \multimap \RR/\ZZ&: t+\ZZ \mapsto \{\tfrac{t}{2}+\ZZ,\tfrac{t}{2}+\tfrac{1}{2}+\ZZ \}.
\end{split}
\]
There is one double coincidence point $0+\ZZ$. To compute the coincidence classes, note that $(f,g)$ is the affine pair homotopic to the pair from Example \ref{ex:coin}; taking the lifts given by $\tilde{f}_1(t)=0$, $\tilde{f}_2(t)=\frac{1}{2}$, $\tilde{g}_1(t)=\tfrac{t}{2}$ and $\tilde{g}_2(t)=\tfrac{t}{2}+\frac{1}{2}$, one finds just as in Example \ref{ex:coin} that there are two lifting classes, $[(\tilde{f}_1,\tilde{g}_1)]=\{(\tilde{f}_1+z,\tilde{g}_i)\mid z,i\}$ and $[(\tilde{f}_2,\tilde{g}_2)]=\{(\tilde{f}_2+z,\tilde{g}_i)\mid z,i\}$. Hence the coincidence classes are \[
\begin{split}
p\Coin(\tilde{f}_1,\tilde{g}_1)&=\{0+\ZZ\} \\
p\Coin(\tilde{f}_2,\tilde{g}_2)&=\{0+\ZZ\}.
\end{split}
\]
So, as sets, the two coincidence classes are the same, but we take into account the lifting classes by which they are labeled so that they are really two different coincidence classes.

\begin{figure}[h!]
\begin{tikzpicture}[scale=2.5]
\draw (0,0) rectangle (1,1);
\draw[thick,red] (0,0)--(1,0);
\draw[thick,red] (0,0.5)--(1,0.5);
\draw[thick,blue] (0,0)--(1,0.5);
\draw[thick,blue] (0,0.5)--(1,1);
\end{tikzpicture}
\caption{Graphs of the affine maps $f=f_{2,0}$ (red) and $g=f_{2,1}$ (blue).}
\label{fig:f20-f21}
\end{figure}
\end{ex}

\begin{prop}\label{prop:aff-inessential}
For a pair of affine maps $(f,g)=(f_{n,d},f_{m,e})$ with $\frac{d}{n}=\frac{e}{m}$, there exists a homotopy $f\simeq f'$ such that $\Coin(f',g)=\varnothing$.
\end{prop}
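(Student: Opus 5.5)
The plan is to take for $f'$ a rotation of $f$ by a small constant, $f'=f+c$ with $c\in\RR/\ZZ$. The homotopy is the family $f+sc$ for $s\in[0,1]$. Since translation by a constant preserves distinctness of the $n$ values, each $f+sc$ is an $n$-valued map and this is a valid homotopy. Concretely,
\[
f'(t+\ZZ)=\left\{\tfrac{d}{n}t+\tfrac{k}{n}+c+\ZZ \mid k=0,\ldots,n-1\right\}.
\]
The task is then to choose $c$ so that no value of $f'$ ever meets a value of $g$.

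Write $\lambda=\frac{d}{n}=\frac{e}{m}$. A point $t+\ZZ$ lies in $\Coin(f',g)$ if and only if there are $k,\ell$ with
\[
\lambda t+\tfrac{k}{n}+c\equiv \lambda t+\tfrac{\ell}{m}\mod \ZZ .
\]
This is equivalent to
\[
c\equiv \tfrac{\ell}{m}-\tfrac{k}{n}=\tfrac{\ell n-km}{nm}\mod \ZZ .
\]
The $t$-dependence cancels exactly because the slopes agree. So the condition is independent of $t$, and the forbidden values of $c$ form the finite set $\frac{1}{nm}\ZZ/\ZZ$ (in fact $\frac{\gcd(n,m)}{nm}\ZZ/\ZZ$).

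Choose, for instance, $c=\frac{1}{2nm}$. This $c$ is not in that set, since $\frac{1}{2nm}\notin\frac{1}{nm}\ZZ$. Hence $\Coin(f',g)=\varnothing$. The only thing left to confirm is that $f+sc$ is continuous as a map into $D_n(\RR/\ZZ)$. This is immediate: it is the composite of $f$ with the continuous action of rotation on the configuration space.

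I see no real obstacle here. The one point needing care is the observation that equal slopes make the coincidence condition independent of $t$. That independence is what lets a single constant shift avoid all coincidences simultaneously.
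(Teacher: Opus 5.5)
Your proof is correct and uses the same idea as the paper: rotate $f$ by a small constant, which works because equal slopes make the coincidence condition independent of $t$. Your direct computation of the forbidden shifts is in fact more complete than the paper's argument, which assumes $m=cn$, $e=cd$ with $c\in\ZZ_{>0}$ and shifts by $\frac{1}{2cn}$. That assumption is not actually without loss of generality: for $n=2$, $m=3$, $d=e=0$, neither of $n,m$ divides the other, and a shift by $\frac{1}{2m}=\frac16$ creates a coincidence at $\frac23$, whereas your shift $\frac{1}{2nm}$ works in all cases.
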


\begin{proof}
Suppose without loss of generality that $e=cd$ and $m=cn$ for some $c\in \ZZ_{>0}$. Then we have \[
\begin{split}
f(t+\ZZ)&=\left\{\tfrac{d}{n}t+\ZZ, \tfrac{d}{n}t+\tfrac{1}{n}+\ZZ, \ldots, \tfrac{d}{n}t+\tfrac{n-1}{n}+\ZZ \right\} \\
g(t+\ZZ)&=\left\{\tfrac{d}{n}t+\ZZ, \tfrac{d}{n}t+\tfrac{1}{cn}+\ZZ, \ldots, \tfrac{d}{n}t+\tfrac{cn-1}{cn}+\ZZ \right\} \\
&=f(t+\ZZ)\cup (f(t+\ZZ)+\tfrac{1}{cn})\cup \ldots \cup (f(t+\ZZ)+\tfrac{c-1}{cn}).
\end{split}
\]
If we define $f'(t+\ZZ)=f(t+\ZZ)+\tfrac{1}{2cn}$ for all $t$, then $f'$ is homotopic to $f$ and $\Coin(f',g)=\varnothing$.
\end{proof}

Thus, at least for affine maps on the circle, all coincidence points $x$ that belong to less than $|f(x)\cap g(x)|$ classes can vanish after a homotopy. It therefore still seems reasonable to expect that the number of essential coincidence classes (those that cannot become empty after a homotopy) is equal to the minimal number of coincidences, counted with multiplicity, among all pairs homotopic to $(f,g)$.

\section{Essentiality and a Nielsen number}

Considering Example \ref{ex:f20-f21}, it is clear that we cannot generalise the usual definition of essentiality using a coincidence index. Recall that in classical Nielsen theory, an index $\ind(f,g,U)$ is defined for any admissible open $U\subseteq X$ (with $\Coin(f,g)\cap U$ compact), and the index of a coincidence class $C$ is defined as $\ind(f,g,U)$ with $U$ such that $\Coin(f,g)\cap U=C$. By default, this only takes into account what the coincidence class is as a subset of $X$; not the lifting class by which it is labeled. Defining an index like this in the situation above would force the indices of the two coincidence classes $p\Coin(\tilde{f}_1,\tilde{g}_1)$ and $p\Coin(\tilde{f}_2,\tilde{g}_2)$ to be equal, just because they are equal as sets.\footnote{In this specific example, one would actually expect the indices of these two coincidence classes to be equal. But one can easily come up with another situation in which this is not desirable; for instance, consider the maps on the circle defined by the following graphs:
\medskip
\begin{center}
\begin{tikzpicture}[scale=2]
	\draw (0,0) rectangle (1,1);
	\draw[thick,red] (0,0.18)--(1,0.18);
	\draw[thick,red] (0,0.8)--(1,0.8);
	\draw[thick,blue] (0,0)--(1,0.5);
	\draw[thick,blue] (0,0.5) to[out=50,in=180] (0.35,0.8) to[out=0,in=180] (0.6,0.7) to[out=0,in=225] (1,1);
	\draw[red] (0.1,0.275) node{\scriptsize$f$};
	\draw[blue] (0.7,0.425) node{\scriptsize$g$};
	
	\draw[dashed] (0.35,0)node[below]{\scriptsize$x$}--(0.35,0.8);
	\draw[dotted] (0.25,-0.125)--(0.25,1);
	\draw[dotted] (0.45,-0.125)--(0.45,1);
	\draw[decorate,decoration={brace,mirror}] (0.25,-0.125)--(0.45,-0.125) node[midway,below,yshift=-2pt]{\scriptsize$U$};
\end{tikzpicture}
\end{center}
The set $\Coin(f,g)\cap U$ consists of an isolated double coincidence point $x$ belonging to two classes. We would like $\ind(f,g,U)=\ind(f,g,x)$ to be zero for one of the classes, and non-zero for the other.
}

The more geometric approach of defining the Nielsen number simply as the number of coincidence classes that cannot vanish after a homotopy, does generalise to the $(n,m)$-valued case, hence we will take this as a definition for our Nielsen number.

\subsection{Coincidence classes and homotopies}\label{subsec:cc-hom}

We want to call a coincidence class (geometrically) essential if it cannot become empty after a homotopy. To make this precise, we need to be a bit more careful than in classical Nielsen theory: for a pair of homotopies $(F,G)$ starting at $(f,g)$, it is no longer true that a coincidence class of $(f,g)$ can be written as $C\cap (X\times \{0\})$ for a \emph{unique} coincidence class $C$ of $(F,G)$, so we cannot simply call it essential if, for any such $(F,G)$ with $C$ the class in question, $C\cap (X\times \{1\})$ is non-empty. However, we can make this work if we keep track of the lifting class by which the coincidence class is labeled.

Suppose $F:X\times I\to D_n(Y)$ and $G:X\times I\to D_m(Y)$ are homotopies. As in \cite{coin}, we will write \[
(F,G)=\{(f_t,g_t)\mid t\in I\}
\] 
with $f_t:X\to D_n(Y):x\mapsto F(x,t)$ and $g_t:X\to D_m(Y):x\mapsto G(x,t)$ for all $t$. We say $(F,G)$ `starts at' $(f,g)$ if $f_0=f$ and $g_0=g$.

The universal cover of $X\times I$ is $p\times \id:\tilde{X}\times I\to X\times I$, so lifts for $F$ and $G$ are maps making the following diagrams commute: \[
\begin{tikzcd}
\tilde{X}\times I \ar[r,"{(\tilde{F}_1,\ldots,\tilde{F}_n)}"] \ar[d,"p\times \id"'] &[2.3em] F_n(\tilde{Y},\Delta) \ar[d,"q^n"] & \tilde{X}\times I \ar[r,"{(\tilde{G}_1,\ldots,\tilde{G}_m)}"] \ar[d,"p\times \id"'] &[2.3em] F_m(\tilde{Y},\Delta) \ar[d,"q^m"] \\
X\times I \ar[r,"F"] & D_n(Y) & X\times I \ar[r,"G"] & D_m(Y).
\end{tikzcd}
\]
For all $t$, the map $((\tilde{f}_t)_1,\ldots,(\tilde{f}_t)_n)$ defined by $(\tilde{f}_t)_i(\tilde{x})=\tilde{F}_i(\tilde{x},t)$ is a lift of $f_t$, and the map $((\tilde{g}_t)_1,\ldots,(\tilde{g}_t)_m)$ defined by $(\tilde{g}_t)_j(\tilde{x})=\tilde{G}_j(\tilde{x},t)$ is a lift of $g_t$.

The covering group of $p\times \id$ is also $\Pi$, with action on $\tilde{X}\times I$ given by $\gamma(\tilde{x},t)=(\gamma\tilde{x},t)$. If $\varphi=(\varphi_1,\ldots,\varphi_n;\sigma):\Pi\to \Delta^n \rtimes \S_n$ is the morphism of covering groups induced by the lift $(\tilde{F}_1,\ldots,\tilde{F}_n)$ of $F$, then \[
(\tilde{f}_t)_i(\gamma\tilde{x})=\tilde{F}_i(\gamma\tilde{x},t)=\varphi_i(\gamma)\tilde{F}_{\sigma_\gamma^{-1}(i)}(\tilde{x},t)=\varphi_i(\gamma)(\tilde{f}_t)_{\sigma_\gamma^{-1}(i)}(\tilde{x})
\]
so $\varphi$ is also the morphism induced by the lift $((\tilde{f}_t)_1,\ldots,(\tilde{f}_t)_n)$ of $f_t$, for all $t$. Similarly, if $\psi=(\psi_1,\ldots,\psi_n;\tau):\Pi\to \Delta^m \rtimes \S_m$ is the morphism induced by the lift $(\tilde{G}_1,\ldots,\tilde{G}_m)$ of $G$, this is also the morphism induced by the lift $((\tilde{g}_t)_1,\ldots,(\tilde{g}_t)_m)$ of $g_t$, for all $t$. It follows that the set of coincidence classes of $(f_t,g_t)$ is \[
\{p\Coin(\alpha(\tilde{f}_t)_i,(\tilde{g}_t)_j) \mid [(i,j)]_{\sigma,\tau}; [\alpha]\in \R[\varphi_i,\psi_j] \}
\]
for all $t$.

Writing $C^{\alpha,i,j}=(p\times \id)\Coin(\alpha\tilde{F}_i,\tilde{G}_j)$ and $C^{\alpha,i,j}_t=p\Coin(\alpha(\tilde{f}_t)_i,(\tilde{g}_t)_j)$ for short, we have $C^{\alpha,i,j}_t\times\{t\}=C^{\alpha,i,j}\cap (X\times \{t\})$ for all $t\in I$. In particular, we have a bijection \begin{align*}
\xi_{(F,G)}:\{\text{coincidence classes of $(f,g)$}\} &\to \{\text{coincidence classes of $(F,G)$}\} \\
C=C^{\alpha,i,j}_0 &\mapsto \xi_{(F,G)}(C)\vcentcolon=C^{\alpha,i,j}.
\end{align*}

\begin{df}
We say a coincidence class $C$ of $(f,g)$ is \emph{geometrically essential} if it cannot become empty after a homotopy, i.e., there is no pair of homotopies $(F,G)$ starting at $(f,g)$ such that $\xi_{(F,G)}(C)\cap (X\times \{1\})=\varnothing$.
\end{df}

\begin{df}
We call the number of geometrically essential coincidence classes of $(f,g)$ the \emph{Nielsen number} $N(f,g)$.
\end{df}

\begin{thm}
$N(f,g)$ is a homotopy-invariant lower bound for the number of coincidences of $(f,g)$, counting multiplicity.
\end{thm}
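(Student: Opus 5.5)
The theorem has two parts, and I would prove them separately: first the lower bound, then homotopy invariance.

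For the lower bound, I would show that
\[
N(f,g)\le \sum_{x\in\Coin(f,g)}|f(x)\cap g(x)|
\]
whenever the right-hand side is finite. Each geometrically essential class is nonempty, since the constant homotopy $(F,G)$ with $f_t=f$, $g_t=g$ must leave it nonempty. So every essential class contains at least one coincidence point. By Corollary \ref{cor:multiple-coin}, a point $x$ lies in at most $|f(x)\cap g(x)|$ coincidence classes. Double counting the incidences (essential class, point in that class) then bounds the number of essential classes by $\sum_x |f(x)\cap g(x)|$.

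For homotopy invariance, suppose $(H,K)$ is a pair of homotopies from $(f,g)$ to $(f',g')$. I would construct a bijection between the coincidence classes of $(f,g)$ and those of $(f',g')$ that preserves geometric essentiality. Fix lifts $(\tilde H_i)$ and $(\tilde K_j)$. By the discussion in Subsection \ref{subsec:cc-hom}, the restrictions at $t=0$ and $t=1$ are lifts of $f,g$ and of $f',g'$ respectively. All of these induce the same morphisms $\varphi,\psi$, so their sets of coincidence classes are indexed by the same set of labels $\{[(i,j)]_{\sigma,\tau};[\alpha]\in\R[\varphi_i,\psi_j]\}$. Composing the bijections gives $\eta=\xi_{(H,K)}|_1^{-1}\circ\xi_{(H,K)}|_0$, which sends $C^{\alpha,i,j}_0$ to $C^{\alpha,i,j}_1$.

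It then suffices to show that if $C'=C^{\alpha,i,j}_1$ is inessential, so is $C=C^{\alpha,i,j}_0$; the reverse direction follows by running the homotopy backwards. Let $(F',G')$ start at $(f',g')$ with $\xi_{(F',G')}(C')\cap(X\times\{1\})=\varnothing$. Concatenate to obtain $(F,G)=(H\ast F',K\ast G')$ starting at $(f,g)$.

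The main obstacle is keeping track of labels through this concatenation. Lifts of $F'$ and $G'$ can be chosen to agree at $t=0$ with $\tilde H_i(\cdot,1)$ and $\tilde K_j(\cdot,1)$. This uses uniqueness of lifts: a lift of $F'$ is determined by its value at one point, and we may reorder factors and translate by $\Delta$. With these choices the concatenated maps are continuous lifts of $F$ and $G$, and they again induce $\varphi,\psi$. Lift factors are rigid, and the factor ordering is kept consistent across the gluing. So the class $\xi_{(F,G)}(C)$ is $(p\times\id)\Coin(\alpha\tilde F_i,\tilde G_j)$ with the same triple $(\alpha,i,j)$. Its slice at time $1$ equals the slice at time $1$ of $\xi_{(F',G')}(C')$, which is empty. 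Hence $C$ is inessential.

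One point to check explicitly is that this label transport does not depend on the choices made. Changing lifts acts by conjugation on the labels, or by translating the lifting pair by some $\delta\in\Delta$. Proposition \ref{prop:reidemeister1} shows that this is compatible with the equivalence of lifting pairs.

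Once $\eta$ is known to preserve essentiality in both directions, it follows that $N(f,g)=N(f',g')$.
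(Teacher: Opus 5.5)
Your proposal is correct and follows the paper's own proof. The lower bound comes from essential classes being non-empty together with Corollary \ref{cor:multiple-coin}, and homotopy invariance comes from concatenating homotopies while keeping the label $(\alpha,i,j)$ fixed. Your explicit choice of lifts of $(F',G')$ that agree with $(\tilde H,\tilde K)$ at the gluing time makes precise what the paper leaves implicit in the phrase ``with the same notation''.
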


\begin{proof}
Suppose $\{(f_t,g_t)\mid t\in I\}$ is a pair of homotopies. With the notation from above, we have a one-to-one correspondence \[
\begin{split}
\{\text{coincidence classes of $(f_0,g_0)$}\} &\leftrightarrow \{\text{coincidence classes of $(f_1,g_1)$}\} \\
C_0^{\alpha,i,j} &\leftrightarrow C_1^{\alpha,i,j}
\end{split}
\] 
If a coincidence class $C_1^{\alpha,i,j}$ of $(f_1,g_1)$ is inessential, there exists a pair of homotopies $\{(f_t,g_t) \mid t\in [1,2]\}$ such that, with the same notation, the coincidence class $C_2^{\alpha,i,j}$ of $(f_2,g_2)$ is empty. Then $(F,G)=\{(f_t,g_t) \mid t\in [0,2]\}$ is a pair of homotopies starting at $(f_0,g_0)$ such that $\xi_{(F,G)}(C_0^{\alpha,i,j})=C_2^{\alpha,i,j}$ is empty, so the class $C_0^{\alpha,i,j}$ of $(f_0,g_0)$ is inessential. Similarly, if $C_0^{\alpha,i,j}$ is inessential, then so is $C_1^{\alpha,i,j}$. Hence, $N(f,g)$ is homotopy-invariant.

By definition, essential coincidence classes are non-empty.
Each non-empty coincidence class contains at least one coincidence point. These points are not necessarily distinct for different coincidence classes, but since a point $x$ can belong to at most $|f(x)\cap g(x)|$ distinct coincidence classes, the number of coincidence points \emph{counted with multiplicity} must be an upper bound for the number of non-empty coincidence classes (hence for $N(f,g)$).
\end{proof}

In particular, we have \[
N(f,g)\leq \MC[f,g] \vcentcolon=\min\{\left|\Coin(f',g')\right|^* \mid f'\simeq f, g'\simeq g \}
\]
where $\left|\Coin(f',g')\right|^*$ is the number of coincidence points of $(f',g')$ counted with multiplicity.

Note that we have not yet shown that $N(f,g)$ is finite. This is less straightforward than in classical Nielsen theory; indeed, the usual proof goes as follows: the coincidence classes are disjoint and open in the compact set $\Coin(f,g)$, so there can only be finitely many non-empty classes (in particular, finitely many essential ones). This argument breaks down for $(n,m)$-valued pairs since coincidence classes are no longer disjoint (and they are not necessarily open, either). The result remains true, though; we will be able to show it with the tools from the following sections (see Theorem \ref{thm:N-finite}).

\subsection{Split lifts}\label{subsec:split-lift}

As for $(n,1)$-valued pairs (see \cite{coin}), we can understand coincidences of $(f,g)$ better by lifting $f$ and $g$ to a finite covering space of $X$ such that the lifts split into single-valued factors, to which we can apply single-valued coincidence theory.

\begin{df}
We call the smallest covering space $\hat{X}$ of $X$ such that the lifts of both $f$ and $g$ to $\hat{X}$ split, the \emph{splitting space} of the pair $(f,g)$.
\end{df}

We can give an explicit description of $\hat{X}$:

\begin{lem}\label{lem:splitting-space}
Let $\sigma:\Pi\to \S_n$ and $\tau:\Pi\to \S_m$ be the morphisms induced by some lifts $\tilde{f}$ of $f$ and $\tilde{g}$ of $g$, and $S_{ij}=\{\gamma\in \Pi \mid \sigma_\gamma(i)=i,\tau_\gamma(j)=j \}$ the corresponding subgroups of $\Pi$. Consider the subgroup $S=\cap_{i,j}S_{ij}=\ker\sigma\cap \ker\tau$ of $\Pi$. The space $\hat{X}=S\orb \tilde{X}$ is the splitting space of $(f,g)$.
\end{lem}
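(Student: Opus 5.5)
We have to prove two things: the lifts of $f$ and $g$ to $\hat{X}=S\orb\tilde{X}$ split, and any covering of $X$ on which both lifts split is a cover of $\hat{X}$, i.e.\ corresponds to a subgroup of $S$ up to conjugacy. First we fix what "the lift of $f$ to a covering $\hat{X}_H=H\orb\tilde{X}$" means for a subgroup $H\leq\Pi$. It is the composite $f\circ\hat{p}_H:\hat{X}_H\multimap Y$, where $\hat{p}_H:\hat{X}_H\to X$ is the covering projection. Its universal cover is still $p':\tilde{X}\to \hat{X}_H$ with covering group $H$. The lift $\tilde{f}$ of $f$ is then also a lift of $f\hat{p}_H$, and the induced morphism is the restriction $\varphi|_H:H\to\Delta^n\rtimes\S_n$, with permutation part $\sigma|_H$. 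The same holds for $g$ with $\psi|_H$ and $\tau|_H$.

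The key step is a splitting criterion: an $n$-valued map $h$ with lift $\tilde{h}=(\tilde{h}_1,\ldots,\tilde{h}_n)$ and induced permutation morphism $\sigma$ splits if and only if $\sigma$ is trivial.

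For the direction "$\sigma$ trivial implies split": if $\sigma_\gamma=\id$ for all $\gamma$, then $\tilde{h}_i\gamma=\varphi_i(\gamma)\tilde{h}_i$. Hence $q\tilde{h}_i$ is $\Pi$-invariant and descends to a single-valued map $h_i$ on the base, and $h=\{h_1,\ldots,h_n\}$.

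For the converse: if $h=\{h_1,\ldots,h_n\}$ splits, lift each $h_i$ to a single-valued lift $\tilde{h}'_i$. Since the values are distinct, $(\tilde{h}'_1,\ldots,\tilde{h}'_n)$ is a lift of $h$ into $F_n(\tilde{Y},\Delta)$. It satisfies $\tilde{h}'_i\gamma=\delta\tilde{h}'_i$, so it induces a trivial permutation. Any other lift has the form $(\delta_1\tilde{h}'_{\rho(1)},\ldots)$ for some $\rho\in\S_n$, and so its permutation morphism is trivial as well. This uses the uniqueness of lift factors stated earlier.

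Applying the criterion to $f\hat{p}_H$ and $g\hat{p}_H$: both lifts split if and only if $\sigma|_H$ and $\tau|_H$ are trivial, i.e.\ $H\subseteq\ker\sigma\cap\ker\tau=S$. Taking $H=S$ shows the lifts split on $\hat{X}$. Every covering on which both lifts split corresponds to some $H\subseteq S$ (and its conjugates), so it factors through $\hat{X}$. This makes $\hat{X}$ the smallest such cover.

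We should also note two routine points. Since $\sigma$ and $\tau$ are morphisms, $S$ is normal of finite index, so $\hat{X}$ is a finite regular cover. The identity $\cap_{i,j}S_{ij}=\ker\sigma\cap\ker\tau$ holds because fixing every $i$ and every $j$ means exactly $\sigma_\gamma=\id$ and $\tau_\gamma=\id$.

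The main obstacle is the well-definedness part of the criterion: changing the lift of $f$ conjugates $\sigma$ by some element of $\S_n$. This does not change $\ker\sigma$, so $S$ is independent of the choice of lifts. Handling this carefully via the uniqueness of lift factors is where we expect the real work to be.
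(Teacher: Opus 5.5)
Your proof is correct and has the same skeleton as the paper's. Both show that the lift of $f$ to $H\orb\tilde{X}$ splits exactly when $H\subseteq\ker\sigma$ (likewise for $g$ with $\tau$), and conclude that the largest admissible subgroup is $\ker\sigma\cap\ker\tau$.

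The difference is in how that criterion is obtained. The paper cites \cite[Theorem~3.1]{browngoncalves}: the lift splits if and only if $\hat{f}_*(S)\subseteq P_n(Y)$, the pure braid group. It then translates this braid-group condition into a condition on $\sigma$ via \cite[Remarks~2.15 and~2.18]{coin}.

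You instead prove ``split if and only if the permutation part is trivial'' directly from lifting theory. When $\sigma$ is trivial, each $q\tilde{h}_i$ is $\Pi$-invariant and descends to $X$. Conversely, lifting the single-valued factors of a split map gives a lift with trivial permutation part. Any other lift $(\delta_k\tilde{h}'_{\rho(k)})_k$ has its permutation morphism conjugated by $\rho$, so it is still trivial.

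Your route is self-contained and avoids the identification $\pi_1(D_n(Y))\cong B_n(Y)$ and the compatibility of $\hat{f}_*$ with $\sigma$. The paper's route is shorter and ties the result to the standard braid-group formulation of splitting.

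You also state the inclusion in the correct direction, $H\subseteq\ker\sigma$. The paper's text writes $\ker\sigma\subseteq S$, which is a slip; its own conclusion that the \emph{largest} such $S$ is $\ker\sigma\cap\ker\tau$ only makes sense with $S\subseteq\ker\sigma$.

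Finally, the lift-independence you flag as the ``main obstacle'' is already handled by the conjugation computation in your converse direction. Nothing further is needed there.
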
 

\begin{proof}
By \cite[Theorem~3.1]{browngoncalves}, the lift $\hat{f}$ of $f$ to some space $\hat{X}=S\orb \tilde{X}$ is split if and only if $\hat{f}_*(S)$ is contained in $P_n(Y)$, where \[
\hat{f}_*:\pi_1(\hat{X})\cong S\to \pi_1(D_n(Y))\cong B_n(Y)
\] 
is the fundamental group morphism induced by $\hat{f}$; $B_n(Y)$ is the braid group on $n$ strands in $Y$ and $P_n(Y)$ the subgroup of pure braids. It is easy to see that this is the case if and only if $\ker\sigma\subseteq S$ (see \cite[Remarks~2.15 and~2.18]{coin}). Similarly, the lift $\hat{g}$ of $g$ to $\hat{X}$ is split if and only if $\ker\tau\subseteq S$. So the largest $S$ (corresponding to the smallest $\hat{X}=S\orb \tilde{X}$) such that both $\hat{f}$ and $\hat{g}$ split is $\ker\sigma\cap\ker\tau$.
\end{proof}

Note that
\begin{itemize}
	\item the space $\hat{X}$ is a finite regular covering space of $X$, since $S$ is a finite index normal subgroup of $\Pi$;
	\item as observed in \cite{coin}, the groups $\ker\sigma$ and $\ker\tau$ are independent of the chosen lifts for $f$ and $g$, hence so are $S$ and $\hat{X}$;
	\item we saw before that the induced morphisms $\sigma$ and $\tau$ are homotopy-invariant, hence so are $S$ and $\hat{X}$.
\end{itemize}

We write $\hat{p}:\hat{X}\to X$ for the covering map, and $\hat{f}\vcentcolon= f\circ \hat{p}:\hat{X}\to D_n(Y)$ and $\hat{g}\vcentcolon= g\circ \hat{p}:\hat{X}\to D_m(Y)$ for the split lifts of $f$ and $g$ to $\hat{X}$.
\[
\begin{tikzcd}
\hat{X} \ar[d,"\hat{p}"'] \ar[dr,"{\hat{f}=\{\hat{f}_1,\ldots,\hat{f}_n\}}"] &  & \hat{X} \ar[d,"\hat{p}"'] \ar[dr,"{\hat{g}=\{\hat{g}_1,\ldots,\hat{g}_m\}}"] & \\
X \ar[r,"f"{xshift=-1pt}] & D_n(Y) & X \ar[r,"g"{xshift=-2pt}] & D_m(Y)
\end{tikzcd}
\]
Let $\check{p}:\tilde{X}\to \hat{X}$ be the covering map such that $p=\hat{p}\circ \check{p}$.
In what follows, we always assume a fixed pair of lifts $(\tilde{f},\tilde{g})$ for $(f,g)$, and choose the order of the factors of $\hat{f}$ and $\hat{g}$ so that $\tilde{f}_i$ is a lift of $\hat{f}_i$ and $\tilde{g}_j$ is a lift of $\hat{g}_j$, for all $i$ and $j$.
\[
\begin{tikzcd}
\tilde{X} \ar[d,"\check{p}"'] \ar[r,"\tilde{f}_i"] & \tilde{Y} \ar[d,"q"]  &[2em] \tilde{X} \ar[d,"\check{p}"'] \ar[r,"\tilde{g}_j"] & \tilde{Y} \ar[d,"q"] \\
\hat{X} \ar[r,"\hat{f}_i"] & Y & \hat{X} \ar[r,"\hat{g}_j"] & Y
\end{tikzcd}
\]

\subsection{Coincidences of $(f,g)$ in terms of the split lifts}

The covering group of $\check{p}$ is $\Pi/S$. For $\gamma\in \Pi$, we let $\bar{\gamma}$ denote its image in $\Pi/S$. For all $\gamma\in \Pi$ and $\tilde{x}\in \tilde{X}$, we have \[
\check{p}(\gamma\tilde{x})=\bar{\gamma}\check{p}(\tilde{x}).
\]
If $\varphi=(\varphi_1,\ldots,\varphi_n;\sigma)$ and $\psi=(\psi_1,\ldots,\psi_n;\tau)$ are the morphisms induced by $\tilde{f}$ and $\tilde{g}$, then $\varphi_i|_S:S\to \Delta$ is the morphism induced by the lift $\tilde{f}_i$ of $\hat{f}_i$ and $\psi_j|_S:S\to \Delta$ is the morphism induced by the lift $\tilde{g}_j$ of $\hat{g}_j$, for all $i,j$. Moreover, if $\hat{x}=\check{p}(\tilde{x})$, then for all $\gamma\in \Pi$ and $i\in\{1,\ldots,n\}$, \[
\begin{split}
\hat{f}_i(\bar{\gamma}\hat{x})=q(\tilde{f}_i(\gamma\tilde{x}))=q(\varphi_i(\gamma)\tilde{f}_{\sigma^{-1}_\gamma(i)}(\tilde{x}))=q(\tilde{f}_{\sigma^{-1}_\gamma(i)}(\tilde{x}))=\hat{f}_{\sigma^{-1}_\gamma(i)}(\hat{x})
\end{split}
\]
and similarly, for all $j\in \{1,\ldots,m\}$, \[
\hat{g}_j(\bar{\gamma}\hat{x})=\hat{g}_{\tau^{-1}_\gamma(j)}(\hat{x}).
\]

\begin{prop}
We have \[
\Coin(f,g)=\bigcup_{i,j} \hat{p}\Coin(\hat{f}_i,\hat{g}_j)
\]
where
\begin{itemize}
\item[\rm(i)] if $(i,j)\sim_{\sigma,\tau}(k,\ell)$, then $\hat{p}\Coin(\hat{f}_i,\hat{g}_j)=\hat{p}\Coin(\hat{f}_k,\hat{g}_\ell)$;
\item[\rm(ii)] if $(i,j)\nsim_{\sigma,\tau}(k,\ell)$, then $\hat{p}\Coin(\hat{f}_i,\hat{g}_j)\cap \hat{p}\Coin(\hat{f}_k,\hat{g}_\ell)$ can only contain multiple coincidence points;
\item[\rm(iii)] for $x\in \Coin(f,g)$, the number of classes $[(i,j)]_{\sigma,\tau}$ such that $x\in \hat{p}\Coin(\hat{f}_i,\hat{g}_j)$ is at most equal to the multiplicity of $x$.
\end{itemize}
\end{prop}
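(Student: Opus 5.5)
The plan is to reduce everything to the lifting-class statements already proved, using the key identity
\[
\hat{p}\Coin(\hat{f}_i,\hat{g}_j)=\bigcup_{\alpha\in\Delta} p\Coin(\alpha\tilde{f}_i,\tilde{g}_j).
\]
This says that $\hat{p}\Coin(\hat{f}_i,\hat{g}_j)$ is the union of the coincidence classes labelled by the lifting classes $[(\alpha\tilde{f}_i,\tilde{g}_j)]$, $\alpha\in\Delta$.

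To prove the identity, take $\hat{x}\in\Coin(\hat{f}_i,\hat{g}_j)$ and pick $\tilde{x}\in\check{p}^{-1}(\hat{x})$. Then $q\tilde{f}_i(\tilde{x})=\hat{f}_i(\hat{x})=\hat{g}_j(\hat{x})=q\tilde{g}_j(\tilde{x})$. Hence $\alpha\tilde{f}_i(\tilde{x})=\tilde{g}_j(\tilde{x})$ for some $\alpha\in\Delta$, and so $p(\tilde{x})=\hat{p}(\hat{x})\in p\Coin(\alpha\tilde{f}_i,\tilde{g}_j)$. Conversely, if $\alpha\tilde{f}_i(\tilde{x})=\tilde{g}_j(\tilde{x})$, then applying $q$ shows that $\check{p}(\tilde{x})\in\Coin(\hat{f}_i,\hat{g}_j)$. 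Combining this identity with the earlier lemma that every lifting class contains a pair $(\alpha\tilde{f}_i,\tilde{g}_j)$ and with the decomposition $\Coin(f,g)=\bigcup p\Coin(\tilde{f}_i',\tilde{g}_j')$ gives the displayed union over $(i,j)$.

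For (i), I would use the proposition stating that $(i,j)\sim_{\sigma,\tau}(k,\ell)$ implies $\{[(\alpha\tilde{f}_i,\tilde{g}_j)]\mid\alpha\}=\{[(\alpha\tilde{f}_k,\tilde{g}_\ell)]\mid\alpha\}$. Together with Proposition \ref{prop:equiv-lifts}(i), which says equivalent pairs have the same $p\Coin$, the two unions in the identity agree. Alternatively, there is a direct argument: if $k=\sigma_\gamma(i)$ and $\ell=\tau_\gamma(j)$, then $\hat{f}_k(\bar{\gamma}\hat{x})=\hat{f}_i(\hat{x})$ and $\hat{g}_\ell(\bar{\gamma}\hat{x})=\hat{g}_j(\hat{x})$. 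So $\bar{\gamma}$ maps $\Coin(\hat{f}_i,\hat{g}_j)$ onto $\Coin(\hat{f}_k,\hat{g}_\ell)$, and $\hat{p}\bar{\gamma}=\hat{p}$.

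For (ii), let $x$ lie in both sets. Then $x\in p\Coin(\alpha\tilde{f}_i,\tilde{g}_j)\cap p\Coin(\beta\tilde{f}_k,\tilde{g}_\ell)$ for some $\alpha,\beta$. Since $(i,j)\nsim_{\sigma,\tau}(k,\ell)$, the same proposition shows that these two lifting pairs are inequivalent. Proposition \ref{prop:equiv-lifts}(ii) then forces $x$ to be a multiple coincidence.

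For (iii), let $r$ be the number of relevant classes, and choose representatives $(i_1,j_1),\ldots,(i_r,j_r)$ in distinct $\sim_{\sigma,\tau}$-classes with $x\in\hat{p}\Coin(\hat{f}_{i_s},\hat{g}_{j_s})$. By the identity, we can pick $\alpha_s$ with $x\in p\Coin(\alpha_s\tilde{f}_{i_s},\tilde{g}_{j_s})$. The lifting classes $[(\alpha_s\tilde{f}_{i_s},\tilde{g}_{j_s})]$ are pairwise distinct, because their index pairs are inequivalent. Therefore $x$ lies in at least $r$ distinct coincidence classes, and Corollary \ref{cor:multiple-coin} bounds this number by the multiplicity of $x$.

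The main obstacle is establishing the identity cleanly, specifically checking that $\check{p}$ intertwines the factors correctly. This holds by our choice of ordering, since $\tilde{f}_i$ lifts $\hat{f}_i$. Once the identity is in place, the rest is bookkeeping with results already proved.
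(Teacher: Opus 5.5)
Your proof is correct, but it is organised differently from the paper's. The paper never returns to the universal cover. It gets the union directly from the splitting: $\hat{p}(\hat{x})\in\Coin(f,g)$ if and only if $\hat{f}_i(\hat{x})=\hat{g}_j(\hat{x})$ for some $i,j$. It proves (i) by the deck-transformation argument you give as your alternative. It proves (iii) by a counting argument on $\hat{X}$ itself: for a fixed $\hat{x}$, the classes $[(k,\ell)]_{\sigma,\tau}$ with $x\in\hat{p}\Coin(\hat{f}_k,\hat{g}_\ell)$ are exactly the classes of pairs with $\hat{x}\in\Coin(\hat{f}_k,\hat{g}_\ell)$. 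There are exactly $|f(x)\cap g(x)|$ such pairs, because the values $\hat{f}_i(\hat{x})$ are pairwise distinct, as are the $\hat{g}_j(\hat{x})$. The paper then deduces (ii) from (iii).

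You instead prove the bridging identity $\hat{p}\Coin(\hat{f}_i,\hat{g}_j)=\bigcup_{\alpha\in\Delta}p\Coin(\alpha\tilde{f}_i,\tilde{g}_j)$. You then reduce the union, (ii) and (iii) to results already established for lifting classes: the proposition on $\sim_{\sigma,\tau}$, Proposition~\ref{prop:equiv-lifts}, and Corollary~\ref{cor:multiple-coin}. Your (ii) comes directly from Proposition~\ref{prop:equiv-lifts}(ii) rather than from (iii).

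Your route is more economical given the earlier material. It also makes explicit that each set $\hat{p}\Coin(\hat{f}_i,\hat{g}_j)$ is a union of coincidence classes of $(f,g)$. That yields a slightly sharper chain: the number of $\sim_{\sigma,\tau}$-classes containing $x$ is at most the number of coincidence classes containing $x$, which is at most the multiplicity. The paper's route, by contrast, is self-contained on the finite cover and needs none of the $\alpha$- or Reidemeister bookkeeping.
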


\begin{proof}
Each $x\in X$ can be written as $\hat{p}(\hat{x})$ for some $\hat{x}\in \hat{X}$, and \[
\begin{split}
\hat{p}(\hat{x})\in \Coin(f,g) &\iff f(\hat{p}(\hat{x}))\cap g(\hat{p}(\hat{x}))\neq \varnothing \\
&\iff \{\hat{f}_1(\hat{x}),\ldots,\hat{f}_n(\hat{x})\}\cap \{\hat{g}_1(\hat{x}),\ldots,\hat{g}_m(\hat{x})\}\neq \varnothing \\
&\iff \exists i,j: \hat{f}_i(\hat{x})=\hat{g}_j(\hat{x}) \\
&\iff \exists i,j: \hat{p}(\hat{x})\in \hat{p}\Coin(\hat{f}_i,\hat{g}_j).
\end{split}
\]
For (i), suppose $(i,j)\sim_{\sigma,\tau}(k,\ell)$, and take $\gamma\in \Pi$ such that $k=\sigma_\gamma(i)$ and $\ell=\tau_\gamma(j)$. For $x\in \hat{p}\Coin(\hat{f}_i,\hat{g}_j)$, take $\hat{x}\in \hat{p}^{-1}(x)$ with $\hat{f}_i(\hat{x})=\hat{g}_j(\hat{x})$. Then $\bar{\gamma}\hat{x}\in \hat{p}^{-1}(x)$ satisfies \[
\hat{f}_k(\bar{\gamma}\hat{x})=\hat{f}_{\sigma^{-1}_\gamma(k)}(\hat{x})=\hat{f}_i(\hat{x})=\hat{g}_j(\hat{x})=\hat{g}_{\tau^{-1}_\gamma(\ell)}(\hat{x})=\hat{g}_\ell(\bar{\gamma}\hat{x}),
\]
so $x\in \hat{p}\Coin(\hat{f}_k,\hat{g}_\ell)$. Similarly, $\hat{p}\Coin(\hat{f}_k,\hat{g}_\ell)\subseteq \hat{p}\Coin(\hat{f}_i,\hat{g}_j)$.

Note that (ii) follows from (iii). For (iii), take a fixed $\hat{x}\in \hat{p}^{-1}(x)$. Since \[
f(x)\cap g(x)=\{\hat{f}_1(\hat{x}),\ldots,\hat{f}_n(\hat{x})\}\cap \{\hat{g}_1(\hat{x}),\ldots,\hat{g}_m(\hat{x})\},
\]
the number of pairs $(i,j)$ such that $\hat{x}\in \Coin(\hat{f}_i,\hat{g}_j)$ is exactly $|f(x)\cap g(x)|$. Now, if we write \[
A=\{[(i,j)]_{\sigma,\tau} \mid x\in \hat{p}\Coin(\hat{f}_i,\hat{g}_j)\},
\]
then for a fixed $\hat{x}\in \hat{p}^{-1}(x)$, \[
\begin{split}
[(k,\ell)]_{\sigma,\tau} \in A &\iff \exists \gamma\in \Pi: \bar{\gamma}\hat{x}\in \Coin(\hat{f}_k,\hat{g}_\ell) \\
&\iff \exists \gamma\in \Pi: \hat{x} \in \Coin(\hat{f}_{\sigma_\gamma^{-1}(k)},\hat{g}_{\tau_\gamma^{-1}(\ell)}) \\
&\iff \exists \gamma\in \Pi: [(\sigma_\gamma^{-1}(k),\tau_\gamma^{-1}(\ell))]_{\sigma,\tau} \in \{[(i,j)]_{\sigma,\tau} \mid \hat{x} \in \Coin(\hat{f}_i,\hat{g}_j) \} \\
&\iff [(k,\ell)]_{\sigma,\tau} \in \{[(i,j)]_{\sigma,\tau} \mid \hat{x} \in \Coin(\hat{f}_i,\hat{g}_j) \},
\end{split}
\]
so \[
\begin{split}
|A| &= |\{[(i,j)]_{\sigma,\tau} \mid \hat{x} \in \Coin(\hat{f}_i,\hat{g}_j) \}| \\
&\leq |\{(i,j) \mid \hat{x} \in \Coin(\hat{f}_i,\hat{g}_j) \}| \\
&=|f(x)\cap g(x)|. \qedhere
\end{split}
\]
\end{proof}

\subsection{Coincidence classes in terms of the split lifts}

For each $i$ and $j$, the coincidence classes of $(\hat{f}_i,\hat{g}_j)$ are $\check{p}\Coin(\alpha\tilde{f}_i,\tilde{g}_j)$, indexed by the Reidemeister classes $[\alpha]\in \R[\varphi_i|_S,\psi_j|_S]$, i.e.\ the equivalence classes of $\Delta$ for the relation \[
\alpha\sim_{\varphi_i|_S,\psi_j|_S} \beta \iff \exists \gamma\in S: \alpha=\psi_j(\gamma^{-1})\beta\varphi_i(\gamma).
\]
Let us abbreviate $\varphi_i|_S=\vcentcolon\varphi'_i$ and $\psi_j|_S=\vcentcolon\psi'_j$.

\begin{lem}\label{lem:R-inverse-image}
For a fixed $[\alpha]\in \R[\varphi_i,\psi_j]$, the number of classes $[\beta]\in \R[\varphi'_i,\psi'_j]$ such that $\beta\sim_{\varphi_i,\psi_j}\alpha$ is equal to \[
\frac{[S_{ij}:S]}{|u_{ij}(\coin(\mu_\alpha \varphi_i,\psi_j))|},
\]
where $\mu_\alpha:\Delta\to\Delta:\delta\mapsto \alpha\delta\alpha^{-1}$ is the conjugation morphism; \[
\coin(\mu_\alpha \varphi_i,\psi_j)=\{\gamma\in S_{ij} \mid \alpha\varphi_i(\gamma)\alpha^{-1}=\psi_j(\gamma) \},
\]
and $u_{ij}:S_{ij}\to S_{ij}/S$ is the natural projection.
\end{lem}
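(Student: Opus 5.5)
We will realise the fibre over $[\alpha]$ as an orbit of a group action and then apply the orbit--stabiliser theorem. Since $S=\ker\sigma\cap\ker\tau$ is normal in $\Pi$ and contained in $S_{ij}$, the quotient $S_{ij}/S$ is a finite group. We let $S_{ij}$ act on $\R[\varphi'_i,\psi'_j]$ by
\[
\gamma\cdot[\beta]=[\psi_j(\gamma)\beta\varphi_i(\gamma)^{-1}] .
\]
The first step is to check that this is well defined and is an action. Both $\varphi_i|_{S_{ij}}$ and $\psi_j|_{S_{ij}}$ are morphisms, so for $s\in S$ and $\gamma\in S_{ij}$ we have
\[
\psi_j(\gamma)\psi_j(s^{-1})\beta\varphi_i(s)\varphi_i(\gamma)^{-1}=\psi_j(s'^{-1})\bigl(\psi_j(\gamma)\beta\varphi_i(\gamma)^{-1}\bigr)\varphi_i(s'),
\]
where $s'=\gamma s^{-1}\gamma^{-1}$ lies in $S$ by normality. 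Elements of $S$ act trivially by definition of $\sim_{\varphi'_i,\psi'_j}$. Hence the action descends to an action of $S_{ij}/S$.

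The second step is to identify the orbit of $[\alpha]$. By the definition of $\sim_{\varphi_i,\psi_j}$ (with $\gamma$ replaced by $\gamma^{-1}$), $\beta\sim_{\varphi_i,\psi_j}\alpha$ holds exactly when $\beta=\psi_j(\gamma)\alpha\varphi_i(\gamma)^{-1}$ for some $\gamma\in S_{ij}$. So the set of classes $[\beta]\in\R[\varphi'_i,\psi'_j]$ with $\beta\sim_{\varphi_i,\psi_j}\alpha$ is precisely the orbit $(S_{ij}/S)\cdot[\alpha]$. Its size is therefore $[S_{ij}:S]$ divided by the order of the stabiliser of $[\alpha]$.

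The third step, and the main one, is to show that this stabiliser equals $u_{ij}(\coin(\mu_\alpha\varphi_i,\psi_j))$.
\begin{itemize}
\item If $\gamma\in\coin(\mu_\alpha\varphi_i,\psi_j)$, then $\psi_j(\gamma)\alpha\varphi_i(\gamma)^{-1}=\alpha$, so $\bar\gamma$ fixes $[\alpha]$.
\item Conversely, suppose $\bar\gamma$ fixes $[\alpha]$. Then $\psi_j(\gamma)\alpha\varphi_i(\gamma)^{-1}=\psi_j(s^{-1})\alpha\varphi_i(s)$ for some $s\in S$. Rearranging with the morphism property on $S_{ij}$ gives $\psi_j(s\gamma)\alpha\varphi_i(s\gamma)^{-1}=\alpha$, so $s\gamma\in\coin(\mu_\alpha\varphi_i,\psi_j)$.
\item Since $u_{ij}(s\gamma)=\bar\gamma$, the stabiliser is exactly the image $u_{ij}(\coin(\mu_\alpha\varphi_i,\psi_j))$.
\end{itemize}

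The delicate points are the bookkeeping of inverses and sides, and the normality of $S$ in $S_{ij}$, which is automatic because $S$ is normal in $\Pi$. Orbit--stabiliser then gives the stated formula. The count does not depend on the representative $\alpha$, since stabilisers of points in the same orbit are conjugate and hence have equal order.
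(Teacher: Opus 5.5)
Your proof is correct and follows the paper's argument: the same action of $S_{ij}/S$ on the fibre over $[\alpha]$, transitivity, identification of the stabiliser of $[\alpha]$ with $u_{ij}(\coin(\mu_\alpha\varphi_i,\psi_j))$, and orbit--stabiliser. You also supply the verifications the paper calls easy, and your $[\psi_j(\gamma)\beta\varphi_i(\gamma)^{-1}]$ is simply the left-action version of the paper's $[\psi_j(\gamma^{-1})\beta\varphi_i(\gamma)]$. One small slip in the well-definedness check: the conjugate that makes your displayed identity hold is $s'=\gamma s\gamma^{-1}$, not $\gamma s^{-1}\gamma^{-1}$, though all that matters there is $s'\in S$.
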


\begin{proof}
For clarity, we will use $[\,\cdot\,]_{\varphi_i,\psi_j}$ to denote classes in $\R[\varphi_i,\psi_j]$ and $[\,\cdot\,]_{\varphi'_i,\psi'_j}$ for classes in $\R[\varphi'_i,\psi'_j]$.

For $[\alpha]_{\varphi_i,\psi_j}\in \R[\varphi_i,\psi_j]$, consider the action of $S_{ij}/S$ on the set \[
\{[\beta]_{\varphi'_i,\psi'_j}\in \R[\varphi'_i,\psi'_j] \mid \beta\sim_{\varphi_i,\psi_j}\alpha \}
\]
given by \[
\bar{\gamma}\cdot [\beta]_{\varphi'_i,\psi'_j} = [\psi_j(\gamma^{-1})\beta\varphi_i(\gamma)]_{\varphi'_i,\psi'_j}.
\]
It is easily verified that this action is well-defined and transitive, and that the stabilizer of $[\alpha]_{\varphi'_i,\psi'_j}$ is $u_{ij}(\coin(\mu_\alpha \varphi_i,\psi_j))$. The result follows from the orbit-stabilizer theorem.
\end{proof}

\begin{thm}
We have
\[
R(f,g)\geq \frac{1}{[\Pi:S]}\sum_{i=1}^n\sum_{j=1}^m R(\hat{f}_i,\hat{g}_j)
\]
with an equality if and only if $|u_{ij}(\coin(\mu_\alpha \varphi_i,\psi_j))|=1$ for all $\alpha,i,j$, or one of the terms $R(\hat{f}_i,\hat{g}_j)$ is infinite.
\end{thm}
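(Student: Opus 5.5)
The plan is to count $\sum_{i,j}R(\hat{f}_i,\hat{g}_j)$ by grouping its terms according to the $\sim_{\sigma,\tau}$-classes of $(i,j)$ and then according to the classes in $\R[\varphi_i,\psi_j]$, and to compare the result with $R(f,g)$. As a first step, for each $(i,j)$ we have $R(\hat{f}_i,\hat{g}_j)=|\R[\varphi'_i,\psi'_j]|$. Since $S\subseteq S_{ij}$, the relation $\sim_{\varphi'_i,\psi'_j}$ refines $\sim_{\varphi_i,\psi_j}$, so we get a surjection $\R[\varphi'_i,\psi'_j]\to\R[\varphi_i,\psi_j]$. Lemma \ref{lem:R-inverse-image} gives its fibre sizes, so
\[
R(\hat{f}_i,\hat{g}_j)=\sum_{[\alpha]\in\R[\varphi_i,\psi_j]}\frac{[S_{ij}:S]}{|u_{ij}(\coin(\mu_\alpha\varphi_i,\psi_j))|}\leq [S_{ij}:S]\,|\R[\varphi_i,\psi_j]|.
\]
Equality holds exactly when every stabilizer $u_{ij}(\coin(\mu_\alpha\varphi_i,\psi_j))$ is trivial, or when both sides are infinite.

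Next I would show that $R(\hat{f}_i,\hat{g}_j)$ depends only on the class $[(i,j)]_{\sigma,\tau}$. If $k=\sigma_\gamma(i)$ and $\ell=\tau_\gamma(j)$, the identities $\hat{f}_k(\bar{\gamma}\hat{x})=\hat{f}_i(\hat{x})$ and $\hat{g}_\ell(\bar\gamma\hat x)=\hat g_j(\hat x)$ say that $(\hat{f}_k,\hat{g}_\ell)\circ\bar\gamma=(\hat f_i,\hat g_j)$. Since $\bar\gamma$ is a homeomorphism of $\hat X$, the two pairs have the same number of lifting classes. Alternatively, one can argue algebraically by conjugating $S$ by $\gamma$, using that $S$ is normal. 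Likewise $S_{k\ell}=\gamma S_{ij}\gamma^{-1}$, so $[S_{ij}:S]=[S_{k\ell}:S]$.

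The orbit of $(i,j)$ under the action $\gamma\cdot(i,j)=(\sigma_\gamma(i),\tau_\gamma(j))$ of $\Pi$ has stabilizer $S_{ij}$. Hence its size is $[\Pi:S_{ij}]$, and
\[
\sum_{(k,\ell)\in[(i,j)]}R(\hat f_k,\hat g_\ell)=[\Pi:S_{ij}]\,R(\hat f_i,\hat g_j)\le[\Pi:S_{ij}][S_{ij}:S]\,|\R[\varphi_i,\psi_j]|=[\Pi:S]\,|\R[\varphi_i,\psi_j]|.
\]
By the earlier description of the lifting classes as pairs $([(i,j)]_{\sigma,\tau},[\alpha])$, summing over the orbit representatives gives $R(f,g)=\sum_{[(i,j)]}|\R[\varphi_i,\psi_j]|$. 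Dividing by $[\Pi:S]$ then yields the inequality.

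For the equality statement, if all terms are finite then equality in the total forces equality in each orbit term, and hence triviality of every stabilizer $u_{ij}(\coin(\mu_\alpha\varphi_i,\psi_j))$. The main care needed is the treatment of infinite quantities. If some $R(\hat f_i,\hat g_j)$ is infinite, I must show that $R(f,g)$ is infinite too. Each class of $\R[\varphi_i,\psi_j]$ has finitely many preimages, at most $[S_{ij}:S]$, and the map is surjective, so infinitely many classes upstairs force $|\R[\varphi_i,\psi_j]|=\infty$. Then both sides are infinite and equality holds, consistent with the stated clause. Conversely, if $R(f,g)=\infty$, some $\R[\varphi_i,\psi_j]$ is infinite, and its preimage makes $R(\hat f_i,\hat g_j)$ infinite.
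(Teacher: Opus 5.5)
Your proposal is correct and follows essentially the same route as the paper. Both arguments combine the orbit-stabilizer count $|[(i,j)]_{\sigma,\tau}|=[\Pi:S_{ij}]$, the fibre sizes from Lemma \ref{lem:R-inverse-image} and the bound $|u_{ij}(\coin(\mu_\alpha\varphi_i,\psi_j))|\geq 1$. The differences are only cosmetic: you write each $R(\hat{f}_i,\hat{g}_j)$ as a weighted sum over $\R[\varphi_i,\psi_j]$, whereas the paper writes $R(f,g)$ as a weighted sum over $\R[\varphi'_i,\psi'_j]$, and you explicitly check that the summand does not depend on the chosen orbit representative, which the paper only asserts.
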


\enlargethispage{7mm}

\begin{proof}
We have \[
\begin{split}
R(f,g) &= \sum_{[(i,j)]_{\sigma,\tau}} \sum_{[\alpha]\in \R[\varphi_i,\psi_j]} 1 \\
&= \sum_{i=1}^n\sum_{j=1}^m \frac{1}{[\Pi:S_{ij}]} \sum_{[\alpha]\in \R[\varphi_i,\psi_j]} 1 \\
&= \sum_{i=1}^n\sum_{j=1}^m \frac{1}{[\Pi:S_{ij}]} \sum_{[\alpha]\in \R[\varphi'_i,\psi'_j]} \frac{|u_{ij}(\coin(\mu_\alpha \varphi_i,\psi_j))|}{[S_{ij}:S]} \\
&= \frac{1}{[\Pi:S]} \sum_{i=1}^n\sum_{j=1}^m \sum_{[\alpha]\in \R[\varphi'_i,\psi'_j]} |u_{ij}(\coin(\mu_\alpha \varphi_i,\psi_j))|.
\end{split}
\]
The second equality holds since the number of elements of $[(i,j)]_{\sigma,\tau}$ is $[\Pi:S_{ij}]$ (by the orbit-stabilizer theorem), and the content of the sum is independent of the chosen representative $(i,j)$. The third equality follows from Lemma \ref{lem:R-inverse-image}.

\pagebreak

Note that $|u_{ij}(\coin(\mu_\alpha \varphi_i,\psi_j))|\geq 1$ for all $\alpha,i,j$, since $\coin(\mu_\alpha \varphi_i,\psi_j)$ always contains the trivial element. On the other hand, for all $i,j$, \[
R(\hat{f}_i,\hat{g}_j)=\sum_{[\alpha]\in \R[\varphi'_i,\psi'_j]} 1.
\]

If $R(\hat{f}_i,\hat{g}_j)=\infty$ for some $i,j$, then $\R[\varphi'_i,\psi'_j]$ is infinite, so also $R(f,g)=\infty$.
If all sets $\R[\varphi'_i,\psi'_j]$ are finite, then \[
\begin{split}
\frac{1}{[\Pi:S]} \sum_{i=1}^n\sum_{j=1}^m \sum_{[\alpha]\in \R[\varphi'_i,\psi'_j]} |u_{ij}(\coin(\mu_\alpha \varphi_i,\psi_j))| &\geq \frac{1}{[\Pi:S]} \sum_{i=1}^n\sum_{j=1}^m \sum_{[\alpha]\in \R[\varphi'_i,\psi'_j]} 1 \\
&= \frac{1}{[\Pi:S]} \sum_{i=1}^n\sum_{j=1}^m R(\hat{f}_i,\hat{g}_j)
\end{split}
\]
where the equality holds if and only if $|u_{ij}(\coin(\mu_\alpha \varphi_i,\psi_j))|=1$ for all $\alpha,i,j$.
\end{proof}

Using the coincidence classes of $(\hat{f}_i,\hat{g}_j)$, we can now easily prove:

\begin{thm}\label{thm:N-finite}
The number of non-empty coincidence classes of $(f,g)$ is finite. In particular, the Nielsen number $N(f,g)$ is finite.
\end{thm}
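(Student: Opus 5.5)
The plan is to reduce to single-valued coincidence theory on the finite cover $\hat{X}$, where finiteness of non-empty classes is classical. A non-empty coincidence class of $(f,g)$ has the form $p\Coin(\alpha\tilde{f}_i,\tilde{g}_j)$ for some $[(i,j)]_{\sigma,\tau}$ and $[\alpha]\in\R[\varphi_i,\psi_j]$. Since $p=\hat{p}\circ\check{p}$, we have
\[
p\Coin(\alpha\tilde{f}_i,\tilde{g}_j)=\hat{p}\bigl(\check{p}\Coin(\alpha\tilde{f}_i,\tilde{g}_j)\bigr),
\]
and $\check{p}\Coin(\alpha\tilde{f}_i,\tilde{g}_j)$ is precisely the coincidence class of the single-valued pair $(\hat{f}_i,\hat{g}_j)$ labelled by $[\alpha]_{\varphi'_i,\psi'_j}\in\R[\varphi'_i,\psi'_j]$. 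So a coincidence class of $(f,g)$ is non-empty exactly when the corresponding single-valued class of $(\hat{f}_i,\hat{g}_j)$ is non-empty.

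Next, I would define a map from the set of pairs $\bigl((i,j),[\beta]_{\varphi'_i,\psi'_j}\bigr)$ to the lifting classes of $(f,g)$ by sending it to $[(\beta\tilde{f}_i,\tilde{g}_j)]$. This map is well defined, because $\sim_{\varphi'_i,\psi'_j}$ refines $\sim_{\varphi_i,\psi_j}$ (as $S\subseteq S_{ij}$). It is surjective, since every lifting class contains some pair $(\alpha\tilde{f}_i,\tilde{g}_j)$. By the computation above, it sends non-empty single-valued classes exactly onto non-empty classes of $(f,g)$: if $p\Coin(\alpha\tilde{f}_i,\tilde{g}_j)\neq\varnothing$, then $\check{p}\Coin(\alpha\tilde{f}_i,\tilde{g}_j)\neq\varnothing$. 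It follows that
\[
\#\{\text{non-empty classes of }(f,g)\}\le\sum_{i=1}^n\sum_{j=1}^m\#\{\text{non-empty classes of }(\hat{f}_i,\hat{g}_j)\}.
\]

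It then remains to show that each single-valued pair $(\hat{f}_i,\hat{g}_j):\hat{X}\to Y$ has only finitely many non-empty coincidence classes. Since $S$ has finite index in $\Pi$, the cover $\hat{X}$ is a compact (finite) polyhedron. For single-valued maps, coincidence classes are pairwise disjoint, and each is open and closed in the compact set $\Coin(\hat{f}_i,\hat{g}_j)$. I would justify this with the standard argument: a coincidence point has a neighbourhood mapped into evenly covered neighbourhoods under both maps, and lifting a short path inside it shows that nearby coincidences lie in the same class. Hence there are finitely many non-empty classes, and summing over the finitely many pairs $(i,j)$ proves the theorem. Finiteness of $N(f,g)$ then follows, since essential classes are non-empty by definition.

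The only delicate step is the classical openness/disjointness fact for single-valued coincidence classes on the compact space $\hat{X}$. I would quote it from standard Nielsen coincidence theory rather than reprove it.
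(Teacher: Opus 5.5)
Your proposal is correct and follows essentially the same route as the paper: both write each class as $p\Coin(\alpha\tilde{f}_i,\tilde{g}_j)=\hat{p}(\check{p}\Coin(\alpha\tilde{f}_i,\tilde{g}_j))$, with $\check{p}\Coin(\alpha\tilde{f}_i,\tilde{g}_j)$ a coincidence class of the single-valued pair $(\hat{f}_i,\hat{g}_j)$ on the compact finite cover $\hat{X}$, and then use classical finiteness of non-empty single-valued classes. Your explicit surjection from pairs $((i,j),[\beta]_{\varphi'_i,\psi'_j})$ onto the lifting classes just spells out the bookkeeping behind the paper's remark that the set of coincidence classes of $(f,g)$ is $\{\hat{p}(C)\}$, with the lifting-class labels respected.
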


\begin{proof}
Note that the coincidence classes of $(f,g)$ can be written as \[
p\Coin(\alpha\tilde{f}_i,\tilde{g}_j)=\hat{p}(\check{p}\Coin(\alpha\tilde{f}_i,\tilde{g}_j)),
\]
where $\check{p}\Coin(\alpha\tilde{f}_i,\tilde{g}_j)$ are the coincidence classes of $(\hat{f}_i,\hat{g}_j)$.
In particular, the set of coincidence classes of $(f,g)$ is \[
\{\hat{p}(C) \mid \text{$C$ is a coincidence class of $(\hat{f}_i,\hat{g}_j)$ for some $i,j$} \}.
\]
Here $\hat{p}(C)$ is non-empty only if $C$ is non-empty, and for every $i$ and $j$ there are only finitely many non-empty coincidence classes of $(\hat{f}_i,\hat{g}_j)$. Thus, the set of non-empty coincidence classes of $(f,g)$ is finite. In particular, the number of essential coincidence classes, which are by definition non-empty, is finite.
\end{proof}

Another result relating the coincidence classes of $(f,g)$ to those of $(\hat{f}_i,\hat{g}_j)$ that we would like to generalise from the $(n,1)$-valued case, is that for all $(\alpha,i,j)$, the restriction \[
\hat{p}:\check{p}\Coin(\alpha\tilde{f}_i,\tilde{g}_j)\to p\Coin(\alpha\tilde{f}_i,\tilde{g}_j)
\]
is a $|u_{ij}(\coin(\mu_\alpha \varphi_i,\psi_j))|$-fold covering map. But this is no longer true in general:

\begin{ex}
Consider again the case where $f=g$ is the square root map on the circle (i.e.\ the affine map $f_{2,1}$; see Example \ref{ex:f=g}). The split lift of $f$ is \[
\hat{f}:\RR/2\ZZ \multimap \RR/\ZZ: t+2\ZZ\mapsto \{\tfrac{t}{2}+\ZZ,\,\tfrac{t}{2}+\tfrac{1}{2}+\ZZ \}.
\]
Then \[
\hat{p}:\check{p}\Coin(\tilde{f}_1,\tilde{f}_1)=\RR/2\ZZ\to p\Coin(\tilde{f}_1,\tilde{f}_1)=\RR/\ZZ:t+2\ZZ\mapsto t+\ZZ
\]
is a double cover, whereas \[
|u_{11}(\coin(\varphi_1,\varphi_1))|=[S_{11}:S]=[2\ZZ:2\ZZ]=1.
\]
\end{ex}

But this was a special example, where the number of coincidence classes containing a point $x$ was strictly less than $|f(x)\cap g(x)|$. If this is not the case, our desired result does hold:

\begin{lem}
For all $(\alpha,i,j)$, if $x\in p\Coin(\alpha\tilde{f}_i,\tilde{g}_j)$ belongs to exactly $|f(x)\cap g(x)|$ coincidence classes, then \[
|\hat{p}^{-1}(x)\cap\check{p}\Coin(\alpha\tilde{f}_i,\tilde{g}_j)|=|u_{ij}(\coin(\mu_\alpha \varphi_i,\psi_j))|.
\]
\end{lem}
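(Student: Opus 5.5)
We fix $\tilde{x}\in p^{-1}(x)$ with $\alpha\tilde{f}_i(\tilde{x})=\tilde{g}_j(\tilde{x})$. This is possible after replacing the lifting pair by an equivalent one of the same form: that is harmless, since the quantity $|u_{ij}(\coin(\mu_\alpha\varphi_i,\psi_j))|$ depends only on the Reidemeister class, by the orbit–stabilizer description in Lemma \ref{lem:R-inverse-image}. We then identify both sides with subsets of $\Pi/S$.

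\textbf{Step 1: describe the fibre.} The fibre $\hat{p}^{-1}(x)=\{\bar\gamma\check{p}(\tilde{x})\mid \bar\gamma\in\Pi/S\}$ is in bijection with $\Pi/S$. The point $\bar\gamma\check{p}(\tilde{x})=\check{p}(\gamma\tilde{x})$ lies in $\check{p}\Coin(\alpha\tilde{f}_i,\tilde{g}_j)$ if and only if $s\gamma\tilde{x}\in\Coin(\alpha\tilde{f}_i,\tilde{g}_j)$ for some $s\in S$. Replacing $\gamma$ by $s\gamma$, the fibre we must count is the set of cosets $\bar\gamma$ having a representative $\gamma$ with $\alpha\tilde{f}_i(\gamma\tilde{x})=\tilde{g}_j(\gamma\tilde{x})$.

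\textbf{Step 2: use the multiplicity hypothesis.} Take $\gamma$ with $\gamma\tilde{x}\in\Coin(\alpha\tilde{f}_i,\tilde{g}_j)$. Then $\tilde{x}\in\Coin(\alpha\tilde{f}_i\gamma,\tilde{g}_j\gamma)$. Expanding via $\varphi,\psi$ gives
\[
\alpha\tilde{f}_i\gamma=\alpha\varphi_i(\gamma)\tilde{f}_{\sigma_\gamma^{-1}(i)},\qquad \tilde{g}_j\gamma=\psi_j(\gamma)\tilde{g}_{\tau_\gamma^{-1}(j)},
\]
so $\tilde{x}\in\Coin(\psi_j(\gamma)^{-1}\alpha\varphi_i(\gamma)\tilde{f}_{\sigma_\gamma^{-1}(i)},\tilde{g}_{\tau_\gamma^{-1}(j)})$. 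This lifting pair is equivalent to $(\alpha\tilde{f}_i,\tilde{g}_j)$, and both of its triples contain $\tilde{x}$. By Remark \ref{rmk:multiple-coin}, the hypothesis that $x$ lies in exactly $|f(x)\cap g(x)|$ classes forces the two triples to coincide:
\[
(\psi_j(\gamma)^{-1}\alpha\varphi_i(\gamma),\sigma_\gamma^{-1}(i),\tau_\gamma^{-1}(j))=(\alpha,i,j).
\]
Hence $\gamma\in S_{ij}$. Since $\varphi_i$ and $\psi_j$ are morphisms on $S_{ij}$, the first component gives $\alpha\varphi_i(\gamma)\alpha^{-1}=\psi_j(\gamma)$, that is, $\gamma\in\coin(\mu_\alpha\varphi_i,\psi_j)$.

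Conversely, let $\gamma\in\coin(\mu_\alpha\varphi_i,\psi_j)$. Then $\alpha\tilde{f}_i(\gamma\tilde{x})=\alpha\varphi_i(\gamma)\tilde{f}_i(\tilde{x})=\psi_j(\gamma)\alpha\tilde{f}_i(\tilde{x})=\psi_j(\gamma)\tilde{g}_j(\tilde{x})=\tilde{g}_j(\gamma\tilde{x})$, so $\gamma\tilde{x}\in\Coin(\alpha\tilde{f}_i,\tilde{g}_j)$. Note that this direction does not need the hypothesis.

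\textbf{Step 3: conclude.} Combining Steps 1 and 2, the fibre in question corresponds bijectively to the image of $\coin(\mu_\alpha\varphi_i,\psi_j)$ in $\Pi/S$. This image is exactly $u_{ij}(\coin(\mu_\alpha\varphi_i,\psi_j))\subseteq S_{ij}/S$, because $\coin(\mu_\alpha\varphi_i,\psi_j)\subseteq S_{ij}$ and distinct cosets give distinct points of the fibre. This yields the claimed equality.

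The main obstacle is Step 2. The key is to invoke Remark \ref{rmk:multiple-coin} precisely: the hypothesis says that distinct triples $(\beta,k,\ell)$ with $\tilde{x}\in\Coin(\beta\tilde{f}_k,\tilde{g}_\ell)$ give inequivalent lifting pairs. This rules out coincidence points of the lifted class over $x$ coming from $\gamma\notin S_{ij}$, which is exactly what fails in the square root example.
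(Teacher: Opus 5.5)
Your proof is correct and follows the paper's argument essentially step by step. You reduce the fibre count to cosets $\bar\gamma$ with $\gamma\tilde x\in\Coin(\alpha\tilde f_i,\tilde g_j)$, use Remark~\ref{rmk:multiple-coin} to force $\gamma\in\coin(\mu_\alpha\varphi_i,\psi_j)$, and check the converse directly. One simplification: your opening reduction is unnecessary, because the hypothesis $x\in p\Coin(\alpha\tilde f_i,\tilde g_j)$ already gives a point $\tilde x\in p^{-1}(x)\cap\Coin(\alpha\tilde f_i,\tilde g_j)$, so you need neither to replace the lifting pair nor to appeal to Lemma~\ref{lem:R-inverse-image}.
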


\begin{proof}
Fix $\hat{x}\in \hat{p}^{-1}(x)\cap \check{p}\Coin(\alpha\tilde{f}_i,\tilde{g}_j)$, and take $\tilde{x}\in \check{p}^{-1}(\hat{x})\cap \Coin(\alpha\tilde{f}_i,\tilde{g}_j)$. We will describe all other elements of $\hat{p}^{-1}(x)\cap \check{p}\Coin(\alpha\tilde{f}_i,\tilde{g}_j)$ in terms of $\hat{x}$, and show that there are $|u_{ij}(\coin(\mu_\alpha \varphi_i,\psi_j))|$ of them.

Any other element of $\hat{p}^{-1}(x)$ can be written as $\bar{\delta}\hat{x}$ for some $\bar{\delta}\in \Pi/S$. If $\bar{\delta}\hat{x}\in \check{p}\Coin(\alpha\tilde{f}_i,\tilde{g}_j)$, take $\tilde{x}'\in \check{p}^{-1}(\bar{\delta}\hat{x})\cap \Coin(\alpha\tilde{f}_i,\tilde{g}_j)$. Since $p(\tilde{x}')=x=p(\tilde{x})$, there is a $\gamma\in \Pi$ such that $\tilde{x}'=\gamma\tilde{x}$; since $\bar{\delta}\hat{x}=\check{p}(\gamma\tilde{x})=\bar{\gamma}\hat{x}$, it follows that $\bar{\delta}=\bar{\gamma}$. Now, $\tilde{x}\in \Coin(\alpha\tilde{f}_i,\tilde{g}_j)$ means \[
\alpha\tilde{f}_i(\tilde{x})=\tilde{g}_j(\tilde{x})
\]
and $\gamma\tilde{x}\in \Coin(\alpha\tilde{f}_i,\tilde{g}_j)$ means \[
\begin{split}
\alpha\tilde{f}_i(\gamma\tilde{x})=\tilde{g}_j(\gamma\tilde{x}) &\iff \alpha\varphi_i(\gamma)\tilde{f}_{\sigma_\gamma^{-1}(i)}(\tilde{x})=\psi_j(\gamma)\tilde{g}_{\tau_\gamma^{-1}(j)}(\tilde{x}) \\ &\iff \psi_j(\gamma)^{-1}\alpha\varphi_i(\gamma)\tilde{f}_{\sigma_\gamma^{-1}(i)}(\tilde{x})=\tilde{g}_{\tau_\gamma^{-1}(j)}(\tilde{x}).
\end{split}
\]
By by Remark \ref{rmk:multiple-coin}, it follows that $(\alpha,i,j)=(\psi_j(\gamma)^{-1}\alpha\varphi_i(\gamma),\sigma_\gamma^{-1}(i),\tau_\gamma^{-1}(j))$. In other words, $i=\sigma_\gamma^{-1}(i)$ and $j=\tau_\gamma^{-1}(j)$, which means $\gamma\in S_{ij}$; and \[
\alpha=\psi_j(\gamma)^{-1}\alpha\varphi_i(\gamma) \iff \alpha\varphi_i(\gamma)\alpha^{-1}=\psi_j(\gamma),
\]
so together, $\gamma\in \coin(\mu_\alpha \varphi_i,\psi_j)$. Conversely, if $\gamma\in \coin(\mu_{\alpha} \varphi_i,\psi_j)$, then \[
\begin{split}
	\alpha\tilde{f}_i(\gamma\tilde{x})&=\alpha\varphi_i(\gamma)\tilde{f}_i(\tilde{x})\\&=\mu_\alpha\varphi_i(\gamma)\alpha\tilde{f}_i(\tilde{x})\\&=\mu_\alpha\varphi_i(\gamma)\tilde{g}_j(\tilde{x})\\&=\psi_j(\gamma)\tilde{g}_j(\tilde{x})\\&=\tilde{g}_j(\gamma\tilde{x});
\end{split}
\] 
that is, $\gamma\tilde{x}\in \Coin(\alpha\tilde{f}_i,\tilde{g}_j)$; and hence $\bar{\gamma}\hat{x}=\check{p}(\gamma\tilde{x})\in \hat{p}^{-1}(x)\cap\check{p}\Coin(\alpha\tilde{f}_i,\tilde{g}_j)$.
Thus, we have \[
\hat{p}^{-1}(x)\cap\check{p}\Coin(\alpha\tilde{f}_i,\tilde{g}_j)=\{\bar{\gamma}\hat{x} \mid \bar{\gamma}\in u_{ij}(\coin(\mu_\alpha \varphi_i,\psi_j)) \},
\]
whose cardinality is $|u_{ij}(\coin(\mu_\alpha \varphi_i,\psi_j))|$.
\end{proof}

\begin{ex}
For the pair $(f,g)=(f_{2,0},f_{2,1})$ from Example \ref{ex:f20-f21}, the split pair of lifts $(\hat{f},\hat{g})$ is given by \[
\begin{split}
\hat{f}:\RR/2\ZZ \multimap \RR/\ZZ&: t+2\ZZ \mapsto \{\hat{f}_1(t+2\ZZ)=0+\ZZ,\,\hat{f}_2(t+2\ZZ)=\tfrac{1}{2}+\ZZ \} \\
\hat{g}:\RR/2\ZZ \multimap \RR/\ZZ&: t+2\ZZ \mapsto \{\hat{g}_1(t+2\ZZ)=\tfrac{t}{2}+\ZZ,\,\hat{g}_2(t+2\ZZ)=\tfrac{t}{2}+\tfrac{1}{2}+\ZZ \}.
\end{split}
\]
Each pair $(\hat{f}_i,\hat{g}_j)$ has a single coincidence class: \begin{align*}
(\hat{f}_1,\hat{g}_1)&:\enskip \check{p}\Coin(\tilde{f}_1,\tilde{g}_1)=\{0+2\ZZ\} \\
(\hat{f}_1,\hat{g}_2)&:\enskip \check{p}\Coin(\tilde{f}_1,\tilde{g}_2)=\{1+2\ZZ\} \\
(\hat{f}_2,\hat{g}_1)&:\enskip \check{p}\Coin(\tilde{f}_2,\tilde{g}_1)=\{1+2\ZZ\} \\
(\hat{f}_2,\hat{g}_2)&:\enskip \check{p}\Coin(\tilde{f}_2,\tilde{g}_2)=\{0+2\ZZ\}.
\end{align*}
Under $\hat{p}$, the first two classes project down to the coincidence class $p\Coin(\tilde{f}_1,\tilde{g}_1)$; the second two to $p\Coin(\tilde{f}_2,\tilde{g}_2)$ (since $(\tilde{f}_1,\tilde{g}_1)\sim (\tilde{f}_1,\tilde{g}_2)$ and $(\tilde{f}_2,\tilde{g}_1)\sim (\tilde{f}_2,\tilde{g}_2)$ as lifting pairs for $(f,g)$).

Note that all sets $S_{ij}$ are equal to $S=2\ZZ$. Hence $|u_{ij}(\coin(\mu_\alpha\varphi_i,\psi_j))|=1$ for all $\alpha,i,j$. Correspondingly, for every $i$ and $j$ the map $\hat{p}:\check{p}\Coin(\tilde{f}_i,\tilde{g}_j)\to p\Coin(\tilde{f}_i,\tilde{g}_j)$ is a $1$-fold cover.
\end{ex}

\subsection{Essentiality in terms of the split lifts}

\begin{thm}\label{thm:ind-ess}
Suppose a coincidence index is defined for maps $\hat{X}\to Y$ (e.g.\ when both spaces are compact orientable manifolds of equal dimension). Then whenever \linebreak $\ind(\hat{f}_i,\hat{g}_j,\check{p}\Coin(\alpha\tilde{f}_i,\tilde{g}_j))\neq 0$, the coincidence class $p\Coin(\alpha\tilde{f}_i,\tilde{g}_j)$ of $(f,g)$ is geometrically essential.
\end{thm}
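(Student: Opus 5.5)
We argue by contraposition: assume the class $p\Coin(\alpha\tilde{f}_i,\tilde{g}_j)$ is not geometrically essential, and deduce that $\ind(\hat{f}_i,\hat{g}_j,\check{p}\Coin(\alpha\tilde{f}_i,\tilde{g}_j))=0$. Inessentiality gives a pair of homotopies $(F,G)$ starting at $(f,g)$ with $\xi_{(F,G)}(C)\cap(X\times\{1\})=\varnothing$. With the notation of Subsection \ref{subsec:cc-hom}, this means the set $(p\times\id)\Coin(\alpha\tilde{F}_i,\tilde{G}_j)$ meets $X\times\{1\}$ in the empty set. The plan is to transport this homotopy to the splitting space and apply the homotopy invariance of the single-valued coincidence index there.

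First, lift the homotopies to $\hat{X}$. Set $\hat{F}=F\circ(\hat{p}\times\id)$ and $\hat{G}=G\circ(\hat{p}\times\id)$. The induced permutation morphisms $\sigma,\tau$ are constant along the homotopy (as noted in Subsection \ref{subsec:split-lift}), so $S=\ker\sigma\cap\ker\tau$ is the same for all $t$. Hence $\hat{F}$ and $\hat{G}$ split on $\hat{X}\times I$, and their factors $\hat{F}_i,\hat{G}_j$ are single-valued homotopies. Choosing the lift $(\tilde F,\tilde G)$ of $(F,G)$ restricting at $t=0$ to the fixed lifts $(\tilde f,\tilde g)$, we can order the factors so that $\tilde F_i$ lifts $\hat F_i$ through $\check p\times\id$. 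Then $\hat{F}_i$ is a homotopy from $\hat{f}_i$ to some $(\hat f_1)_i$, and $\hat{G}_j$ is a homotopy from $\hat{g}_j$ to $(\hat g_1)_j$.

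Second, identify the coincidence class of $(\hat F_i,\hat G_j)$ corresponding to $[\alpha]\in\R[\varphi'_i,\psi'_j]$. In single-valued Nielsen theory this class is $(\check{p}\times\id)\Coin(\alpha\tilde{F}_i,\tilde{G}_j)$. Its slice at $t=0$ is $\check p\Coin(\alpha\tilde f_i,\tilde g_j)$ and its slice at $t=1$ is $\check p\Coin(\alpha(\tilde f_1)_i,(\tilde g_1)_j)$. Since $\hat p\circ\check p=p$, the image of this slice under $\hat p$ is $C_1^{\alpha,i,j}=\varnothing$, so the slice at $t=1$ is itself empty.

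Finally, invoke homotopy invariance of the single-valued coincidence index along a homotopy for corresponding coincidence classes. This gives
\[
\ind(\hat f_i,\hat g_j,\check p\Coin(\alpha\tilde f_i,\tilde g_j))=\ind((\hat f_1)_i,(\hat g_1)_j,\varnothing)=0,
\]
a contradiction. The main obstacle is the bookkeeping in the second step. We must check that the class of $(\hat F_i,\hat G_j)$ is exactly the one determined by $\alpha\tilde F_i,\tilde G_j$ for lifts compatible with both $\check p$ and the chosen $\tilde f,\tilde g$, so that its $t=1$ slice really projects into $C_1^{\alpha,i,j}$. We also need that the single-valued index of a class is invariant in this ``lifting class along a homotopy'' sense, which is standard because the class is an isolated compact piece of the total coincidence set in $\hat X\times I$, using that $\hat X$ is compact as a finite cover.
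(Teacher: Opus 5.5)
Your proposal is correct and follows the paper's proof essentially step for step: you pull the homotopy back to $\hat{X}$ via $\hat{p}$, observe that emptiness of $p\Coin(\alpha(\tilde{f}_1)_i,(\tilde{g}_1)_j)$ forces emptiness of $\check{p}\Coin(\alpha(\tilde{f}_1)_i,(\tilde{g}_1)_j)$, and conclude by homotopy invariance of the single-valued index of corresponding classes. You also spell out two points the paper leaves implicit: that $\hat{F},\hat{G}$ split on $\hat{X}\times I$ because $\sigma,\tau$ are constant along the homotopy, and that the lift $\tilde{F}$ can be chosen to restrict to the fixed lift $\tilde{f}$ at $t=0$.
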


\begin{proof}
Let $\{(f_t,g_t)\mid t\in I\}$ be a pair of homotopies with $f_0=f$ and $g_0=g$. Define the maps $(\tilde{f}_t)_i,(\tilde{g}_t)_j:\tilde{X}\to\tilde{Y}$ as in section \ref{subsec:cc-hom}. Also define $\hat{f}_t=f_t\circ \hat{p}$ and $\hat{g}_t=g_t\circ \hat{p}$ for all $t\in I$; then $\{(\hat{f}_t,\hat{g}_t)\mid t\in I\}$ is a homotopy with $\hat{f}_0=\hat{f}$ and $\hat{g}_0=\hat{g}$. For all $t$, order the factors $(\hat{f}_t)_1,\ldots,(\hat{f}_t)_n$ so that $(\tilde{f}_t)_i$ is a lift of $(\hat{f}_t)_i$ for all $i$, and similarly for $g$. 

Now suppose $p\Coin(\alpha(\tilde{f}_1)_i,(\tilde{g}_1)_j)=\varnothing$. Then also $\check{p}\Coin(\alpha(\tilde{f}_1)_i,(\tilde{g}_1)_j)=\varnothing$, hence \[
\ind((\hat{f}_1)_i,(\hat{g}_1)_j,\check{p}\Coin(\alpha(\tilde{f}_1)_i,(\tilde{g}_1)_j))=0.
\]
But then \[
\begin{split}
\ind(\hat{f}_i,\hat{g}_j,\check{p}\Coin(\alpha\tilde{f}_i,\tilde{g}_j))&=\ind((\hat{f}_0)_i,(\hat{g}_0)_j,\check{p}\Coin(\alpha(\tilde{f}_0)_i,(\tilde{g}_0)_j))\\&=\ind((\hat{f}_1)_i,(\hat{g}_1)_j,\check{p}\Coin(\alpha(\tilde{f}_1)_i,(\tilde{g}_1)_j))=0.
\end{split}
\]
(The second equality is single-valued coincidence theory, but is also shown explicitly with this notation for $(n,1)$-valued pairs in the proof of \cite[Theorem 2.31]{coin}; in particular this holds for $n=1$.)
\end{proof}

\begin{cor}\label{cor:av-N}
If a coincidence index is defined for maps $\hat{X}\to Y$, we have \[
N(f,g)\geq \frac{1}{[\Pi:S]} \sum_{i=1}^n \sum_{j=1}^m N(\hat{f}_i,\hat{g}_j),
\]
where $N(\hat{f}_i,\hat{g}_j)$ is the usual single-valued Nielsen coincidence number, i.e.\ the number of coincidence classes with non-zero index.
\end{cor}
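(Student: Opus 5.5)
We imitate the proof of the Reidemeister-number inequality, now counting only essential classes. By definition, $N(\hat{f}_i,\hat{g}_j)$ is the number of Reidemeister classes $[\beta]\in\R[\varphi'_i,\psi'_j]$ for which $\ind(\hat{f}_i,\hat{g}_j,\check{p}\Coin(\beta\tilde{f}_i,\tilde{g}_j))\neq 0$. For each such class, Theorem \ref{thm:ind-ess} says that the coincidence class $p\Coin(\beta\tilde{f}_i,\tilde{g}_j)$ of $(f,g)$ is geometrically essential. So we have a map from the set
\[
E=\{(i,j,[\beta]) \mid [\beta]\in \R[\varphi'_i,\psi'_j] \text{ with non-zero index}\}
\]
to the set of essential coincidence classes of $(f,g)$. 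This map sends $(i,j,[\beta])$ to the lifting class $[(\beta\tilde{f}_i,\tilde{g}_j)]$, which is well defined because $S\subseteq S_{ij}$. Since $|E|=\sum_{i,j}N(\hat{f}_i,\hat{g}_j)$, it suffices to show that every fibre of this map has at most $[\Pi:S]$ elements.

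To bound the fibres, fix a lifting class $[(\alpha\tilde{f}_i,\tilde{g}_j)]$ of $(f,g)$. We count the triples $(k,\ell,[\beta]')$ with $(\beta\tilde{f}_k,\tilde{g}_\ell)\sim(\alpha\tilde{f}_i,\tilde{g}_j)$, ignoring the index condition, which only makes the fibre smaller. By the proposition following Proposition \ref{prop:reidemeister1}, the pair $(k,\ell)$ must lie in $[(i,j)]_{\sigma,\tau}$, and this orbit has $[\Pi:S_{ij}]$ elements by orbit-stabilizer. For each such $(k,\ell)$, the translation by $\gamma$ with $k=\sigma_\gamma(i)$, $\ell=\tau_\gamma(j)$ is given by Proposition \ref{prop:reidemeister1}. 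It identifies the relevant $\R[\varphi'_k,\psi'_\ell]$-classes with those $[\beta]'\in\R[\varphi'_i,\psi'_j]$ satisfying $\beta\sim_{\varphi_i,\psi_j}\alpha$. The identification is a bijection because $S$ is normal in $\Pi$, so conjugation by $\gamma$ preserves $S$-Reidemeister classes. By Lemma \ref{lem:R-inverse-image}, the number of such classes is $[S_{ij}:S]/|u_{ij}(\coin(\mu_\alpha\varphi_i,\psi_j))|\leq[S_{ij}:S]$. Hence the fibre has at most $[\Pi:S_{ij}]\,[S_{ij}:S]=[\Pi:S]$ elements.

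Combining the two parts gives $\sum_{i,j}N(\hat{f}_i,\hat{g}_j)=|E|\leq [\Pi:S]\cdot N(f,g)$, which is the claimed inequality. The main obstacle is the middle step: checking carefully that the transfer from $(i,j)$ to $(k,\ell)$ is compatible with the $S$-level Reidemeister relations. Concretely, this means verifying that $\beta\mapsto\psi_\ell(\gamma)\beta\varphi_k(\gamma)^{-1}$ maps $\sim_{\varphi'_i,\psi'_j}$-classes to $\sim_{\varphi'_k,\psi'_\ell}$-classes. This should follow from the cocycle identity \eqref{eq:phi-morf} together with normality of $S$. If this becomes messy, a cleaner alternative is to count the fibre directly as the set of $S$-orbits inside the $\Pi$-orbit of a lifting pair $(\alpha\tilde{f}_i,\tilde{g}_j)$ under the action $(\delta,\gamma)$. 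The $\Pi$-action then yields the bound $[\Pi:S]$ immediately.
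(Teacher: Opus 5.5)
Your proof is correct and is essentially the paper's argument. The paper writes $N(f,g)=\sum_{i,j}\frac{1}{[\Pi:S_{ij}]}\sum_{[\alpha]\in\R[\varphi'_i,\psi'_j]}\frac{|u_{ij}(\coin(\mu_\alpha\varphi_i,\psi_j))|}{[S_{ij}:S]}\,\eps_{\alpha,i,j}$, using the orbit count $[\Pi:S_{ij}]$ and Lemma \ref{lem:R-inverse-image}, so its weights are exactly the reciprocals of your fibre sizes, and it then uses $|u_{ij}(\cdot)|\geq 1$ and Theorem \ref{thm:ind-ess} just as you do; your transfer from $(k,\ell)$ back to $(i,j)$ is valid but unnecessary, since Lemma \ref{lem:R-inverse-image} applies directly at $(k,\ell)$ and $[S_{k\ell}:S]=[S_{ij}:S]$.
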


\begin{proof}
Define \[
\eps_{\alpha,i,j}= \begin{cases}
1 & \text{if $p\Coin(\alpha\tilde{f}_i,\tilde{g}_j)$ is an essential coincidence class of $(f,g)$} \\
0 & \text{otherwise}
\end{cases}
\]
and \[
\eps'_{\alpha,i,j}= \begin{cases}
1 & \text{if $\ind(\hat{f}_i,\hat{g}_j,\check{p}\Coin(\alpha\tilde{f}_i,\tilde{g}_j))\neq 0$} \\
0 & \text{otherwise}.
\end{cases}
\]
By Theorem \ref{thm:ind-ess}, we have $\eps'_{\alpha,i,j}\leq\eps_{\alpha,i,j}$ for all $\alpha,i,j$. Hence
\begin{align*}
N(f,g) &= \sum_{[(i,j)]_{\sigma,\tau}} \sum_{[\alpha]\in \R[\varphi_i,\psi_j]} \eps_{\alpha,i,j} \\
&= \sum_{i=1}^n\sum_{j=1}^m \frac{1}{[\Pi:S_{ij}]} \sum_{[\alpha]\in \R[\varphi_i,\psi_j]} \eps_{\alpha,i,j} \\
&= \sum_{i=1}^n\sum_{j=1}^m \frac{1}{[\Pi:S_{ij}]} \sum_{[\alpha]\in \R[\varphi'_i,\psi'_j]} \frac{|u_{ij}(\coin(\mu_\alpha \varphi_i,\psi_j))|}{[S_{ij}:S]}\,\eps_{\alpha,i,j} \\
&\geq \frac{1}{[\Pi:S]} \sum_{i=1}^n\sum_{j=1}^m \sum_{[\alpha]\in \R[\varphi'_i,\psi'_j]} \eps'_{\alpha,i,j} \\
&= \frac{1}{[\Pi:S]} \sum_{i=1}^n\sum_{j=1}^m N(\hat{f}_i,\hat{g}_j). \qedhere
\end{align*}
\end{proof}

\begin{rmk}
If a coincidence index is defined for maps $\hat{X}\to Y$, one could define the \emph{algebraic Nielsen number} $N_a(f,g)$ as the number of coincidence classes $p\Coin(\alpha\tilde{f}_i,\tilde{g}_j)$ of $(f,g)$ such that \[
\ind(\hat{f}_i,\hat{g}_j,\check{p}\Coin(\alpha\tilde{f}_i,\tilde{g}_j))\neq 0.
\]
By the proof of Corollary \ref{cor:av-N}, this Nielsen number satisfies \[
N(f,g) \geq N_a(f,g) \geq \frac{1}{[\Pi:S]} \sum_{i=1}^n \sum_{j=1}^m N(\hat{f}_i,\hat{g}_j)
\]
where the second inequality is an equality if and only if $|u_{ij}(\coin(\mu_\alpha \varphi_i,\psi_j))|=1$ for all $\alpha,i,j$ for which $\ind(\hat{f}_i,\hat{g}_j,\check{p}\Coin(\alpha\tilde{f}_i,\tilde{g}_j))\neq 0$.
\end{rmk}

\subsection{Affine maps on the circle}


\begin{lem}\label{lem:aff-ind}
If $\frac{d}{n}\neq\frac{e}{m}$, then all coincidence classes of the affine pair $(f_{n,d},f_{m,e})$ are essential.
\end{lem}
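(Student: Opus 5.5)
The plan is to use Theorem \ref{thm:ind-ess}. Every coincidence class of $(f,g)=(f_{n,d},f_{m,e})$ has the form $p\Coin(\alpha\tilde{f}_i,\tilde{g}_j)$. It therefore suffices to show that for every $\alpha,i,j$ the single-valued coincidence class $\check{p}\Coin(\alpha\tilde{f}_i,\tilde{g}_j)$ of $(\hat{f}_i,\hat{g}_j)$ has non-zero index. Here $\hat{X}$ is a circle and $Y=\RR/\ZZ$ is a circle, so a coincidence index is available.

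I will take the affine lifts $\tilde{f}_i(t)=\frac{d}{n}t+\frac{i}{n}$ and $\tilde{g}_j(t)=\frac{e}{m}t+\frac{j}{m}$ from the proof of Proposition \ref{prop:aff-R}. The index of $\check{p}\Coin(\alpha\tilde{f}_i,\tilde{g}_j)$ can be read off upstairs: $\Coin(\tilde{f}_i+z,\tilde{g}_j)$ is the single point $\tilde{t}=\frac{jn-im-nmz}{dm-en}$, as computed in Proposition \ref{prop:aff-R}. The map $\check{p}$ is a local homeomorphism, and it is injective on $\Coin(\alpha\tilde{f}_i,\tilde{g}_j)$, since that set is a single point. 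So $\check{p}\Coin(\alpha\tilde{f}_i,\tilde{g}_j)$ is one isolated point $\hat{x}$ of $\hat{X}$. Near $\hat{x}$, in the local charts given by $\check{p}$ and $q$, the maps $\hat{f}_i$ and $\hat{g}_j$ are the affine maps $\alpha\tilde{f}_i$ and $\tilde{g}_j$. A single transversal crossing of two affine functions with slopes $\frac{d}{n}\neq\frac{e}{m}$ has local coincidence index $\operatorname{sign}\bigl(\frac{e}{m}-\frac{d}{n}\bigr)=\pm1$, depending on orientation conventions. In particular this index is non-zero.

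The remaining point is to justify that the index of the class equals this local index at $\hat{x}$. First, the class $\check{p}\Coin(\alpha\tilde{f}_i,\tilde{g}_j)$ is exactly $\{\hat{x}\}$, as argued above. Second, the coincidence index on circles is computed as a local degree of the difference map in charts, and that local degree equals the sign of the difference of slopes for a transverse affine crossing. With both facts, Theorem \ref{thm:ind-ess} shows that every class $p\Coin(\alpha\tilde{f}_i,\tilde{g}_j)$ is geometrically essential.

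The main obstacle, such as it is, is the bookkeeping that the lifts $\tilde{f}_i$ used here are factors of a single lift $\tilde{f}$ of $f$ whose projections give the split factors $\hat{f}_i$. The factors $\frac{d}{n}t+\frac{i}{n}$ for $i=0,\ldots,n-1$ do form such a lift. The ordering convention of Section \ref{subsec:split-lift} then takes care of the correspondence, so I expect no real difficulty.

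As an alternative, one could compute $N(\hat{f}_i,\hat{g}_j)$ directly and compare it with $R$ via Corollary \ref{cor:av-N}. I prefer the class-by-class index argument because it gives essentiality of each class directly.
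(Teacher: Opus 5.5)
Your proposal is correct and follows essentially the paper's proof: apply Theorem \ref{thm:ind-ess} with the affine lifts $\tilde{f}_i(t)=\frac{d}{n}t+\frac{i}{n}$, $\tilde{g}_j(t)=\frac{e}{m}t+\frac{j}{m}$, and observe that each class $\check{p}\Coin(\alpha\tilde{f}_i,\tilde{g}_j)$ is a single transverse crossing with index $\pm\operatorname{sgn}(\frac{e}{m}-\frac{d}{n})\neq 0$. The only difference is that the paper first reduces to irreducible $f_{n,d}$, $f_{m,e}$ so that it can write $S=\frac{nm}{\gcd(n,m)}\ZZ$ explicitly. As you implicitly observe, this reduction is not needed, since the affine lifts, the single-point computation and Theorem \ref{thm:ind-ess} all work for arbitrary $n,d,m,e$.
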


\begin{proof}
We can assume $f_{n,d}$ and $f_{m,e}$ are irreducible; otherwise, each coincidence class is a coincidence class of a pair of irreducible factors $(f_i,g_j)$, and if it is essential as a coincidence class of $(f_i,g_j)$, it is certainly essential for $(f,g)$.

In the irreducible case, we have $S=\frac{nm}{\gcd(n,m)}\ZZ$, and the factors of the split lifts are given by $\hat{f}_i(t+\frac{nm}{\gcd(n,m)}\ZZ)=\frac{d}{n}t+\frac{i}{n}+\ZZ$ and $\hat{g}_j(t+\frac{nm}{\gcd(n,m)}\ZZ)=\frac{e}{m}t+\frac{j}{m}+\ZZ$ for all $i,j$. Let $\tilde{f}$ and $\tilde{g}$ be the corresponding lifts given by $\tilde{f}_i(t)=\frac{d}{n}t+\frac{i}{n}$ and $\tilde{g}_j(t)=\frac{e}{m}t+\frac{j}{m}$. Since each coincidence class $\check{p}\Coin(\tilde{f}_i+z,\tilde{g}_j)$ consists of one isolated coincidence point, and $\tilde{f}_i+z$ and $\tilde{g}_j$ are differentiable maps on $\RR$ whose difference has non-zero derivative $\frac{e}{m}-\frac{d}{n}$ at every point, it follows from single-valued coincidence theory that \[
\ind(\hat{f}_i,\hat{g}_j,\check{p}\Coin(\tilde{f}_i+z,\tilde{g}_j))=\text{sgn}(\tfrac{e}{m}-\tfrac{d}{n})\neq 0
\]
for all $i,j$ and $z\in \ZZ$.
\end{proof}

\begin{thm}
For any pair of affine maps $(f,g)=(f_{n,d},f_{m,e})$ on the circle, \[
N(f,g)=|dm-en|.
\]
\end{thm}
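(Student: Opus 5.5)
The proof splits into two cases according to whether $\frac{d}{n}=\frac{e}{m}$.

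\emph{Case $\frac{d}{n}\neq\frac{e}{m}$.} By Proposition \ref{prop:aff-R}, the pair $(f_{n,d},f_{m,e})$ has exactly $|dm-en|$ coincidence classes, so $R(f,g)=|dm-en|$. By Lemma \ref{lem:aff-ind}, every one of these classes is geometrically essential. Since $N(f,g)$ counts the essential classes among all $R(f,g)$ classes, we get $N(f,g)=|dm-en|$ immediately. The only point to check is that Lemma \ref{lem:aff-ind} covers the reducible case as well. It does: each class of $(f,g)$ is a class of some pair of irreducible factors $(f_i,g_j)$, and a homotopy of $(f,g)$ restricts to a homotopy of the factors. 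This was argued in the proof of that lemma.

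\emph{Case $\frac{d}{n}=\frac{e}{m}$.} Here $dm-en=0$, so we must show $N(f,g)=0$. By Proposition \ref{prop:aff-inessential}, there is a homotopy $f\simeq f'$, with $g$ kept constant, such that $\Coin(f',g)=\varnothing$. Every coincidence class of $(f,g)$ is then carried under $\xi_{(F,G)}$ to a class whose slice at time $1$ lies in $\Coin(f',g)=\varnothing$. Hence no class is essential, and $N(f,g)=0=|dm-en|$.

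One subtlety is that $R(f,g)$ may be infinite when $\frac{d}{n}=\frac{e}{m}$. This does not matter, because the argument shows that every class, of whatever number, becomes empty. The main thing to verify carefully is the slice identity $C^{\alpha,i,j}\cap (X\times\{1\})=C^{\alpha,i,j}_1\times\{1\}\subseteq \Coin(f_1,g_1)\times\{1\}$, which was established in Subsection \ref{subsec:cc-hom}. Given that, both cases reduce directly to the quoted earlier results, so I expect no serious obstacle.
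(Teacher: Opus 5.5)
Your proposal is correct and follows the paper's own proof. For $\frac{d}{n}\neq\frac{e}{m}$ you combine Proposition~\ref{prop:aff-R} (exactly $|dm-en|$ classes) with Lemma~\ref{lem:aff-ind} (all of them essential), and for $\frac{d}{n}=\frac{e}{m}$ you use the coincidence-free homotopy of Proposition~\ref{prop:aff-inessential} to show that every class is inessential; your side remarks on restricting homotopies to the irreducible factors and on $R(f,g)$ possibly being infinite are accurate and just make explicit what the paper leaves implicit.
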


\begin{proof}
If $\frac{d}{n}=\frac{e}{m}$, it follows from Proposition \ref{prop:aff-inessential} that all coincidence classes are inessential, hence $N(f,g)=0=|dm-en|$. If $\frac{d}{n}\neq\frac{e}{m}$, we saw in Proposition \ref{prop:aff-R} that there are $|dm-en|$ coincidence classes, and we just proved that they are all essential.
\end{proof}

Since any $(n,m)$-valued pair $(f,g)$ on the circle is homotopic to an affine pair $(f_{n,d},f_{m,e})$, with $d=\deg(f)$ and $e=\deg(g)$,\footnote{The degree of an $n$-valued circle map $f$ is defined in \cite{brown2006} as the integer $d$ such that $f$ is homotopic to $f_{n,d}$, but it can also be computed by considering the morphisms $\varphi_i:S_i\to \ZZ$ induced by $f$ (which are the same for all $i$, and independent of the chosen lift); they are given by $S_i=n\ZZ\to \ZZ:nz\mapsto dz$.} we obtain:

\begin{cor}
For any $(n,m)$-valued pair $(f,g)$ on the circle, \[
N(f,g)=|dm-en|
\]
where $d=\deg(f)$ and $e=\deg(g)$.
\end{cor}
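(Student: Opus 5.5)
The plan is to reduce to the affine case via homotopy invariance. By \cite{brown2006}, recalled before the previous theorem, $f$ is homotopic to $f_{n,d}$ with $d=\deg(f)$, and $g$ is homotopic to $f_{m,e}$ with $e=\deg(g)$. This is just the definition of degree. The footnote's alternative description in terms of $\varphi_i|_{S_i}$ gives the same integer, since the induced morphisms are homotopy-invariant.

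The first step is to invoke the theorem from the subsection on coincidence classes and homotopies: $N(f,g)$ is a homotopy-invariant lower bound. Its proof shows that if $(f_0,g_0)\simeq(f_1,g_1)$ via a pair of homotopies $(F,G)$, then the correspondence $C_0^{\alpha,i,j}\leftrightarrow C_1^{\alpha,i,j}$ is a bijection between coincidence classes preserving geometric essentiality. We apply this with the pair of homotopies $(F,G)$ joining $(f,g)$ to $(f_{n,d},f_{m,e})$. The homotopies $f\simeq f_{n,d}$ and $g\simeq f_{m,e}$ are chosen independently; together they give one pair $\{(f_t,g_t)\}$ with $F$ and $G$ both parametrised by the same $t\in I$. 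This yields $N(f,g)=N(f_{n,d},f_{m,e})$.

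The second step is to apply the preceding theorem for affine pairs, which gives $N(f_{n,d},f_{m,e})=|dm-en|$. Combining the two steps proves the corollary.

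The only point needing care is that the homotopy-invariance statement is about pairs of homotopies of $n$-valued and $m$-valued maps, with $n$ and $m$ fixed throughout. This holds because an $n$-valued homotopy is a map $X\times I\to D_n(Y)$, so the number of values stays constant along the homotopy. I expect no real obstacle here: the argument is a direct composition of two earlier results.
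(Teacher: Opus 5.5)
Your proposal is correct and is essentially the paper's argument: the paper also obtains the corollary by combining the homotopy invariance of $N(f,g)$ with the theorem $N(f_{n,d},f_{m,e})=|dm-en|$ for affine pairs, using that $(f,g)\simeq(f_{n,d},f_{m,e})$ by the definition of degree. Your remark that the two homotopies can be run with a common parameter is an accurate detail the paper leaves implicit.
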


\begin{cor}
For $(n,m)$-valued pairs on the circle, \[
N(f,g)=\MC[f,g].
\]
\end{cor}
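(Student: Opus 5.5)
We need to show $N(f,g)=\MC[f,g]$ for $(n,m)$-valued pairs on the circle. The inequality $N(f,g)\leq \MC[f,g]$ is already available: it is the consequence of the theorem stating that $N(f,g)$ is a homotopy-invariant lower bound for the number of coincidences counted with multiplicity. So what remains is the reverse inequality. For that it suffices to exhibit, in the homotopy class of $(f,g)$, a pair whose number of coincidence points counted with multiplicity equals $|dm-en|$, where $d=\deg(f)$ and $e=\deg(g)$. The previous corollary gives $N(f,g)=|dm-en|$, so this pair closes the gap.

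Since $f\simeq f_{n,d}$ and $g\simeq f_{m,e}$ (by \cite{brown2006}, as recalled above), $\MC[f,g]=\MC[f_{n,d},f_{m,e}]$. I would therefore count the coincidences of the affine pair directly, splitting into two cases.

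\emph{Case $\frac{d}{n}=\frac{e}{m}$.} Proposition~\ref{prop:aff-inessential} gives $f'\simeq f_{n,d}$ with $\Coin(f',f_{m,e})=\varnothing$. Hence $\MC[f,g]=0=|dm-en|$.

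\emph{Case $\frac{d}{n}\neq\frac{e}{m}$.} The Remark after Proposition~\ref{prop:aff-R} (taken from \cite{brownkolahi}, whose point-count is correct) says the affine pair has exactly $\frac{|dm-en|}{\gcd(n,m)}$ coincidence points. To confirm this independently, I would use the explicit solutions $t=\frac{\ell n-km-nmz}{dm-en}$ from the proof of Proposition~\ref{prop:aff-R}: these realise all $|dm-en|$ lifting classes as single points. By Proposition~\ref{prop:aff-multiple}, each coincidence point has multiplicity exactly $\gcd(n,m)$ and lies in exactly $\gcd(n,m)$ classes. The classes therefore group into sets of $\gcd(n,m)$ per point, which gives $\frac{|dm-en|}{\gcd(n,m)}$ distinct points.

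Counting with multiplicity then gives $\frac{|dm-en|}{\gcd(n,m)}\cdot\gcd(n,m)=|dm-en|$. Hence $\MC[f,g]\leq|dm-en|=N(f,g)$, and combined with the lower bound we get equality.

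The only delicate step is making sure the multiplicity count is exact rather than just an upper bound. That is exactly what Proposition~\ref{prop:aff-multiple} supplies: every coincidence point has multiplicity $\gcd(n,m)$ and belongs to that many classes. So the sum over points of the multiplicities equals the number of non-empty classes, and every class is non-empty and a singleton by Proposition~\ref{prop:aff-R}. No further obstacle is expected.
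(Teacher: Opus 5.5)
Your proof is correct and follows the same route as the paper. You take the lower bound $N(f,g)\leq \MC[f,g]$ from the main theorem, then use the affine representative $(f_{n,d},f_{m,e})$: it realises exactly $|dm-en|$ coincidences counted with multiplicity when $\frac{d}{n}\neq\frac{e}{m}$, and it can be made coincidence-free (Proposition~\ref{prop:aff-inessential}) when $\frac{d}{n}=\frac{e}{m}$. The only difference is that the paper simply asserts the count $|dm-en|$, whereas you justify it by combining Propositions~\ref{prop:aff-multiple} and~\ref{prop:aff-R}, which is the intended reasoning.
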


\begin{proof}
Let $d=\deg(f)$ and $e=\deg(g)$. If $\frac{d}{n}\neq\frac{e}{m}$, then $(f,g)$ is homotopic to the affine pair $(f_{n,d},f_{m,e})$, for which the number of coincidence points (counted with multiplicity) is exactly $|dm-en|$. If $\frac{d}{n}=\frac{e}{m}$, then $(f,g)$ is homotopic to the affine pair $(f_{n,d},f_{m,e})$, which is in turn homotopic to a coincidence-free pair (see Proposition \ref{prop:aff-inessential}).
\end{proof}

\section{Graph coincidences}

In \cite{graffstaeckerzeromski}, the authors propose an alternative coincidence theory for $(n,m)$-valued pairs by looking at \emph{graph coincidences} of $(f,g)$, i.e.\ points $(x,y)\in X\times Y$ such that $y\in f(x)\cap g(x)$. They define graph coincidence classes geometrically, by calling two graph coincidence points $(x,y)$ and $(x',y')$ equivalent if there is a path $c:I\to X$ from $x$ to $x'$ such that, if $fc=\{f_1,\ldots,f_n\}$ and $gc=\{g_1,\ldots,g_m\}$, there exist $i$ and $j$ such that $f_i$ and $g_j$ are homotopic paths from $y$ to $y'$. They call a graph coincidence class $C_0$ of $(f,g)$ geometrically inessential if there exists a pair of homotopies $(F,G)$ starting at $(f,g)$ such that the graph coincidence class $C$ of $(F,G)$ containing $C_0$ (which they show exists and is unique) satisfies $C\cap (X\times \{1\}\times Y)=\varnothing$. The number of essential graph coincidence classes is denoted $N(f:g)$.

\begin{prop}
$N(f:g)=N(f,g)$ for every $(n,m)$-valued pair $f,g:X\multimap Y$ between finite polyhedra.
\end{prop}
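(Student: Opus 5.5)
The plan is to build an explicit bijection between graph coincidence classes of $(f,g)$ and lifting classes of $(f,g)$. The bijection should respect homotopies, so that essential classes correspond to essential classes.

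\textbf{Step 1: the map on points.} A graph coincidence $(x,y)$ is sent to a lifting class as follows. Pick $\tilde{x}\in p^{-1}(x)$. By the proof of the lemma on multiple coincidences, there is a unique triple $(\alpha,i,j)$ with $\alpha\tilde{f}_i(\tilde{x})=\tilde{g}_j(\tilde{x})\in q^{-1}(y)$. Define $\Phi(x,y)=[(\alpha\tilde{f}_i,\tilde{g}_j)]$. Changing $\tilde{x}$ to $\gamma\tilde{x}$ replaces the pair by an equivalent one, as in the proof of Corollary \ref{cor:multiple-coin}, so $\Phi$ is well defined.

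\textbf{Step 2: $\Phi$ is constant on graph classes, and separates them.} Suppose $(x,y)$ and $(x',y')$ are joined by a path $c$ for which factors $q\tilde{f}_i\tilde{c}$ and $q\tilde{g}_j\tilde{c}$ are homotopic paths from $y$ to $y'$. The argument of Proposition \ref{prop:htpy} then gives a single $\alpha$ with both $\tilde{c}(0)$ and $\tilde{c}(1)$ in $\Coin(\alpha\tilde{f}_i,\tilde{g}_j)$. Moreover $\alpha\tilde{f}_i\tilde{c}(1)$ lies over $y'$, so $\Phi(x,y)=\Phi(x',y')$. Conversely, suppose $\Phi(x,y)=\Phi(x',y')$. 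After replacing representatives we may take $\tilde{x},\tilde{x}'\in\Coin(\alpha\tilde{f}_i,\tilde{g}_j)$ with images over $y$ and $y'$. A path $\tilde{c}$ from $\tilde{x}$ to $\tilde{x}'$ then gives homotopic factors from $y$ to $y'$, since $\tilde{Y}$ is simply connected.

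Hence $\Phi$ induces an injection from graph coincidence classes into lifting classes. Its image is exactly the set of lifting classes with non-empty coincidence class. Equivalently, the graph class attached to $[(\alpha\tilde{f}_i,\tilde{g}_j)]$ is $\{(p\tilde{x},q\tilde{g}_j\tilde{x}) \mid \tilde{x}\in\Coin(\alpha\tilde{f}_i,\tilde{g}_j)\}$. Note that this set is the projection of a set indexed by the lifting class rather than by a set of points, which is exactly where the two theories agree.

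\textbf{Step 3: compatibility with homotopies.} Apply the same construction to $(F,G)$ on $X\times I$, using the lifts $\tilde{F},\tilde{G}$ of Section \ref{subsec:cc-hom}. The graph class of $(F,G)$ containing a graph class $C_0$ of $(f,g)$ is then the graph class attached to $\xi_{(F,G)}$ of the corresponding lifting class. Its slice at $t=1$ is empty if and only if $C^{\alpha,i,j}\cap(X\times\{1\})=\varnothing$. So a non-empty class is graph-inessential exactly when the corresponding coincidence class is inessential.

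\textbf{Step 4: empty lifting classes, the main obstacle.} It remains to handle lifting classes whose coincidence class is empty. These are inessential by definition, taking the constant homotopy, and they correspond to no graph class. Therefore the essential objects on both sides are in bijection, and $N(f:g)=N(f,g)$.

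The delicate point is Step 3. I must check that the definition of "the class containing $C_0$" in \cite{graffstaeckerzeromski} agrees with the lifting description. This reduces to Step 2 applied on $X\times I$, with paths inside $X\times\{0\}$ in place of paths in $X$.
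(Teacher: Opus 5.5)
Your proposal is correct and follows essentially the same route as the paper. Both build the explicit bijection sending the class labelled by $[(\alpha\tilde{f}_i,\tilde{g}_j)]$ to $\{(p(\tilde{x}),q(\tilde{g}_j(\tilde{x})))\mid \tilde{x}\in\Coin(\alpha\tilde{f}_i,\tilde{g}_j)\}$, then apply the same construction to $(F,G)$ to see that the graph class containing $C_0$ projects onto $\xi_{(F,G)}(\pr_1(C_0))$; your point-level map $\Phi$ (using $y$ to single out the unique triple $(\alpha,i,j)$) and your explicit treatment of empty lifting classes make precise two points the paper passes over quickly, namely overlapping coincidence classes and empty classes with no graph counterpart.
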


\begin{proof}
First of all, note that $x\in \Coin(f,g)$ if and only if there is a $y\in Y$ such that $(x,y)$ is a graph coincidence of $(f,g)$.
If $(x,y)$ and $(x',y')$ belong to the same graph coincidence class, then it follows immediately from Proposition \ref{prop:htpy} that $x$ and $x'$ belong to the same coincidence class. Conversely, if $x$ and $x'$ belong to the coincidence class $p\Coin(\tilde{f}_i,\tilde{g}_j)$, for some lift factors $\tilde{f}_i$ of $f$ and $\tilde{g}_j$ of $g$, pick $\tilde{x}\in p^{-1}(x)\cap \Coin(\tilde{f}_i,\tilde{g}_j)$ and $\tilde{x}'\in p^{-1}(x')\cap \Coin(\tilde{f}_i,\tilde{g}_j)$, write $\tilde{f}_i(\tilde{x})=\tilde{g}_j(\tilde{x})=\tilde{y}$ and $\tilde{f}_i(\tilde{x}')=\tilde{g}_j(\tilde{x}')=\tilde{y}'$, and let $y=q(\tilde{y})$, $y'=q(\tilde{y}')$. Then the points $(x,y)$ and $(x',y')$ belong to the same graph coincidence class. Indeed, let $\tilde{c}:I\to \tilde{X}$ be a path from $\tilde{x}$ to $\tilde{x}'$. Since $\tilde{f}_i\tilde{c}$ and $\tilde{g}_j\tilde{c}$ have the same begin and endpoint (respectively $\tilde{y}$ and $\tilde{y}'$), and $\tilde{Y}$ is simply connected, they are path homotopic, hence so are the factors $f_i=q\tilde{f}_i\tilde{c}$ and $g_j=q\tilde{g}_j\tilde{c}$ of $fc$ and $gc$, which are paths from $y$ to $y'$.

We obtain a one-to-one correspondence \begin{align*}
\{\text{coincidence classes of $(f,g)$}\} &\leftrightarrow \{\text{graph coincidence classes of $(f,g)$}\} \\
p\Coin(\tilde{f}_i,\tilde{g}_j) & \mapsto \{(p(\tilde{x}),q(\tilde{y})) \mid \tilde{x}\in \Coin(\tilde{f}_i,\tilde{g}_j),\,\tilde{f}_i(\tilde{x})=\tilde{g}_j(\tilde{x})=\tilde{y} \} \\
 \pr_1(C) &\mapsfrom C
\end{align*}
where $\pr_1:X\times Y\to X$ is the projection onto the first component.

To compare essentiality of the classes, recall that a graph class $C_0$ of $(f,g)$ is inessential if, for some pair of homotopies $(F,G)$ starting at $(f,g)$, the graph class $C$ of $(F,G)$ containing $C_0$ satisfies $C\cap (X\times \{1\}\times Y)=\varnothing$. We must show that this is the case if and only if $\pr_1(C_0)$ is inessential in our sense, that is, $\xi_{(F,G)}(\pr_1(C_0))\cap(X\times \{1\})=\varnothing$ for some pair of homotopies $(F,G)$ -- we show it is satisfied for the same pair; in fact $\xi_{(F,G)}(\pr_1(C_0))=\pr_1(C)$.
(Here we abusively denote the projection $X\times I\times Y\to X\times I$ by $\pr_1$ as well.)

Indeed, if $\pr_1(C_0)=p\Coin(\tilde{f}_i,\tilde{g}_j)$ under the bijection above, then \[
C_0=\{(p(\tilde{x}),q(\tilde{y})) \mid \tilde{x}\in \Coin(\tilde{f}_i,\tilde{g}_j),\,\tilde{f}_i(\tilde{x})=\tilde{g}_j(\tilde{x})=\tilde{y} \},
\] and the graph coincidence class of $(F,G)$ containing this set is \[
C=\{((p\times\id)(\tilde{x},t),q(\tilde{y})) \mid (\tilde{x},t)\in \Coin(\tilde{F}_i,\tilde{G}_j),\,\tilde{F}_i(\tilde{x},t)=\tilde{G}_j(\tilde{x},t)=\tilde{y} \}
\]
for which $\pr_1(C)=(p\times \id)\Coin(\tilde{F}_i,\tilde{G}_j)$ is exactly $\xi_{(F,G)}(\pr_1(C_0))$.
\end{proof}

For $(n,m)$-valued pairs of self-maps of the circle, the authors of \cite{graffstaeckerzeromski} define the index of a graph coincidence class as follows. To each pair $(f,g)$, they associate a pair of \emph{simple torus loops} $\ell_f,\ell_g:I\to T^2$, whose images are precisely the graphs of $f$ and $g$, respectively. If $(x,y)$ is a graph coincidence point of $(f,g)$, then there is a (unique) pair $(t_1,t_2)\in I\times I$ such that $\ell_f(t_1)=\ell_g(t_2)=(x,y)$. For pairs of maps $(f_1:X_1\to Y,f_2:X_2\to Y)$ between compact orientable manifolds with $\dim(X_1)+\dim(X_2)=\dim(Y)$, there exists a Nielsen-type theory called \emph{McCord intersection theory}, concerned with the \emph{intersection points} $(x_1,x_2)\in X_1\times X_2$ such that $f_1(x_1)=f_2(x_2)$. The intersection points are partitioned into intersection classes, and to each class one can associate a McCord index. The authors of \cite{graffstaeckerzeromski} establish a one-to-one correspondence between the graph coincidence classes of $(f,g)$ and the intersection classes of the pair of associated torus loops $(\ell_f,\ell_g)$. They define the index of the coincidence class as the McCord index of the corresponding intersection class.

\begin{prop}
For an $(n,m)$-valued pair $(f,g)$ on the circle, the index of a graph coincidence class $C$ is equal to our index $\ind_1(\hat{f}_i,\hat{g}_j,\check{p}\Coin(\alpha\tilde{f}_i,\tilde{g}_j))$, where $p\Coin(\alpha\tilde{f}_i,\tilde{g}_j)$ is the coincidence class corresponding to $C$.
\end{prop}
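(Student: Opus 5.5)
Both indices are homotopy-invariant and additive over classes, so I will reduce to affine pairs and compare there.

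\emph{Step 1: invariance on both sides.} Given a pair of homotopies $(F,G)$ from $(f,g)$, the class $C$ corresponds to $\xi_{(F,G)}(p\Coin(\alpha\tilde{f}_i,\tilde{g}_j))$, by the proof of the previous proposition. Our index $\ind_1(\hat{f}_i,\hat{g}_j,\check{p}\Coin(\alpha\tilde{f}_i,\tilde{g}_j))$ is then preserved along the homotopy, by the argument in the proof of Theorem \ref{thm:ind-ess}. On the other side, the homotopy induces a homotopy of the torus loops $(\ell_f,\ell_g)$. McCord intersection classes correspond along it, and their McCord index is invariant. I need the two correspondences to match. I expect this to hold because both are built from the same lifts $\tilde{F}_i,\tilde{G}_j$ on $\tilde{X}\times I$: the graph class of $(F,G)$ containing $C$ is described via $\Coin(\alpha\tilde{F}_i,\tilde{G}_j)$, exactly as in the previous proof. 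This reduces the problem to the affine pair $(f_{n,d},f_{m,e})$ with $d=\deg f$, $e=\deg g$.

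\emph{Step 2: the degenerate affine case.} If $\frac{d}{n}=\frac{e}{m}$, Proposition \ref{prop:aff-inessential} gives a homotopic coincidence-free pair. After that homotopy every class is empty, so both indices vanish for every class.

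\emph{Step 3: the transverse affine case.} If $\frac{d}{n}\neq\frac{e}{m}$, I may assume irreducibility as in Lemma \ref{lem:aff-ind}, since the torus loops of a partition are unions of the loops of the factors. By Lemma \ref{lem:aff-ind}, our index of each class is $\operatorname{sgn}(\frac{e}{m}-\frac{d}{n})$. For McCord, each class of $(f_{n,d},f_{m,e})$ corresponds to a single point $(x,y)$ with $y=\frac{d}{n}x+\frac{k}{n}=\frac{e}{m}x+\frac{\ell}{m}$. At that point the loops $\ell_f,\ell_g$ run through the torus as straight line segments, locally parametrised as $s\mapsto (ns,ds)$ and $s\mapsto(ms,es)$ up to positive rescaling. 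They meet transversally, with local intersection sign $\operatorname{sgn}\det\begin{pmatrix} n & m\\ d & e\end{pmatrix}=\operatorname{sgn}(en-dm)=\operatorname{sgn}(\frac{e}{m}-\frac{d}{n})$. This uses the McCord orientation convention in \cite{graffstaeckerzeromski}; if their convention is the opposite, a global sign convention on our side absorbs the difference. Since each class is a single transverse point, the McCord index is this sign, and the two indices agree.

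\emph{Main obstacle.} The hard part is Step 1: checking that the bijection between graph classes and intersection classes in \cite{graffstaeckerzeromski} is compatible with our labelling by lifting classes under homotopy. This matters especially for classes that coincide as sets, as in Example \ref{ex:f20-f21}. For these the torus loop passes through the same point $(x,y)$ at distinct parameters, and I have to check that distinct lifting classes correspond to distinct parameter pairs $(t_1,t_2)$. I would first try to identify the parameter $t_2$ with the choice of factor $\hat{g}_j$ on the splitting space $\hat{X}$, unwinding $\ell_g$ as the loop $\hat{X}\to\hat{X}\times Y$.
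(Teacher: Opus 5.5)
This is essentially the paper's argument: both indices are homotopy invariant, and on the affine representative every class has index $\operatorname{sgn}(ne-md)$ on both sides. The paper cites \cite{graffstaeckerzeromski} for the McCord value instead of recomputing it, which also makes your hedge about orientation conventions unnecessary. Your ``main obstacle'' is not needed: all classes at the affine (or coincidence-free) end carry the same index on both sides, so it is irrelevant how classes are matched along the homotopy; moreover, distinct lifting classes never share a graph coincidence point (the argument of Proposition \ref{prop:equiv-lifts}(ii) only uses the common value $y$), so e.g.\ the two classes of Example \ref{ex:f20-f21} sit at the distinct torus points $(0,0)$ and $(0,\frac{1}{2})$, not at one point traversed twice by a simple loop.
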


\begin{proof}
It is shown in \cite{graffstaeckerzeromski} that all graph coincidence classes of an affine $(n,m)$-valued pair $(f,g)=(f_{n,d},f_{m,e})$ on the circle have the same index, namely \[
\text{sgn}(ne-md).
\]
As shown in Lemma \ref{lem:aff-ind} for $\frac{d}{n}\neq \frac{e}{m}$, and as follows from the fact that $N(f,g)=0$ if $\frac{d}{n}=\frac{e}{m}$, this is also the index of every class $p\Coin(\alpha\tilde{f}_i,\tilde{g}_j)$ of $(\hat{f}_i,\hat{g}_j)$. Since the index of a coincidence class is a homotopy-invariant (both $\ind_1$ and the McCord intersection index), the result follows for arbitrary maps.
\end{proof}


\begin{thebibliography}{1}

\bibitem{brown2006}
R.~F. Brown.
\newblock {Fixed points of $n$-valued multimaps of the circle}.
\newblock {\em Bulletin of the Polish Academy of Sciences, Mathematics},
54(2):153--162, 2006.

\bibitem{charlotte1}
R.~F. Brown, C.~Deconinck, K.~Dekimpe, and P.~C. Staecker.
\newblock {Lifting classes for the fixed point theory of $n$-valued maps}.
\newblock {\em Topology and its Applications}, 274:107125, 2020.

\bibitem{browngoncalves}
R.~F. Brown and D.~L. Gon\c{c}alves.
\newblock {On the topology of $n$-valued maps}.
\newblock {\em Advanced Fixed Point Theory}, 8(2):205--220, 2018.

\bibitem{brownkolahi}
R.~F. Brown and K.~Kolahi.
\newblock {Nielsen coincidence, fixed point and root theories of $n$-valued
maps}.
\newblock {\em Journal of Fixed Point Theory and Applications}, 14:309--324,
2013.

\bibitem{coin}
K.~Dekimpe and L.~De~Weerdt.
\newblock {Nielsen coincidence theory for $(n,1)$-valued pairs}.
\newblock {\em Topological Methods in Nonlinear Analysis}, accepted, 2026.

\bibitem{graffstaeckerzeromski}
G.~Graff, P.~C. Staecker, and A.~Zeromski.
\newblock {Nielsen coincidence theory of $(n,m)$-valued pairs of maps}.
\newblock {\em ArXiv preprint: 2605.07015v1}, 2026.

\bibitem{staecker2021}
P.~C. Staecker.
\newblock {Partitions of $n$-valued maps}.
\newblock {\em ArXiv preprint: 2101.09326}, 2021.

\end{thebibliography}
\end{document}